\newcommand{\R}{\mathbb R}
\newcommand{\C}{\mathbb C}
\newcommand{\E}{\mathbb E}
\newcommand{\N}{\mathbb N}
\newcommand{\1}{\mathbf 1}
\newcommand{\U}{\mathcal U}
\newcommand{\Ub}{\mathbb U}
\newcommand{\Tr}{\operatorname{Tr}}
\renewcommand{\Re}{\operatorname{Re}}
\newcommand{\I}{\mathfrak I}
\newtheorem{thm}{Theorem}[section]
\newtheorem{lemma}[thm]{Lemma}
\newtheorem{cor}[thm]{Corollary}
\newtheorem{propB}{Proposition}
\theoremstyle{remark}
\newtheorem{rem}[thm]{Remark}
\numberwithin{equation}{section}
\author{Titus Lupu}
\address {CNRS and LPSM, UMR 8001,
Sorbonne Université,
4 place Jussieu,
75252 Paris cedex 05,
France}
\email
{titus.lupu@upmc.fr}
\title{Topological expansion in isomorphism
theorems between matrix-valued fields and random walks}
\begin{document}

\begin{abstract}
We consider Gaussian fields of real symmetric, complex Hermitian or quaternionic Hermitian matrices over an electrical network, 
and describe how the isomorphisms between these fields and random walks give rise to topological expansions encoded by ribbon graphs. 
We further consider matrix-valued Gaussian fields twisted by an orthogonal, unitary or symplectic connection. In this case the isomorphisms involve traces of holonomies of the connection along random walk loops parametrized by boundary cycles of ribbon graphs.
\end{abstract}

\subjclass[2010]{60G15, 81T18, 81T25 (primary), 
and 15B52, 60J55 (secondary)}
\keywords{discrete gauge theory, Gaussian free field, Gaussian orthogonal ensemble, Gaussian symplectic ensemble, Gaussian unitary ensemble, isomorphism theorems, holonomy, matrix integrals,
matrix models, random matrices, random walks, ribbon graphs, topological expansion, Wilson loops}

\maketitle

\section{Introduction}

It is known since the works of Symanzik
\cite{Symanzik65Scalar,Symanzik66Scalar,Symanzik1969QFT}
and Brydges, Fröhlich and Spencer \cite{BFS82Loop} that the Gaussian free field (GFF) has representations involving random walks, sometimes referred to as "isomorphisms". 
For a survey on the subject we refer to
\cite{MarcusRosen2006MarkovGaussianLocTime,
Sznitman2012LectureIso}. Here we will be interested in a representation that appears in 
Brydges, Fröhlich, Spencer \cite{BFS82Loop} and 
Dynkin \cite{Dynkin1984Isomorphism,Dynkin1984IsomorphismPresentation}, that expresses
\begin{displaymath}
\mathbb{E}\Big[
\prod_{k=1}^{2r}\phi(x_{k})
F(\phi^{2}/2)
\Big],
\end{displaymath}
for $\phi$ a GFF, in terms of pairings of vertices
$x_{k}$-s and random walks joining the pairs. Following
\cite{BauerschmidtHelmuthSwan2}, we will call it the
BFS-Dynkin isomorphism.

Kassel and Lévy considered vector-valued GFFs twisted by an orthogonal or unitary connection \cite{KasselLevy16CovSym}. 
In this setting isomorphism theorems involve the holonomy of the connection along the random walks. 
Holonomies along random walk or Brownian loops have been also studied in
\cite{LeJanHolonomiesRW,LeJanTopologyBrownianLoops,
CamiaLeJanReddy2020}.

In this paper we will consider fields of random Gaussian matrices, real symmetric, complex Hermitian
or quaternionic Hermitian,
on an electrical network. These are matrix-valued GFFs.
The matrix above any vertex of the network is proportional
to a GOE, GUE or GSE matrix. 
Here we will write an isomorphism for
\begin{equation}
\label{EqI2}
\Big\langle \Big(\prod_{l=1}^{m(\nu)}
\Tr\Big(
\prod_{k=\nu_{1}+\dots+\nu_{l-1}+1}
^{\nu_{1}+\dots+\nu_{l}}
\Phi(x_{k})\Big)\Big)
F(\Tr(\Phi^{2})/2)\Big\rangle_{\beta,n},
\end{equation}
where $\Phi$ is the matrix-valued GFF,
$\beta\in\{1,2,4\}$, $n$ is the size of the matrices,
$\nu_{1},\dots, \nu_{m(\nu)}$
are positive integers with
$\vert\nu\vert:=\nu_{1}+\dots +\nu_{m(\nu)}$ even,
and $x_{1},\dots,x_{\vert\nu\vert}$ vertices on the network.
By taking the $x_{k}$-s equal inside each of the traces, we get  symmetric polynomials in the eigenvalues. By expanding the traces and the product above, one can write a BFS-Dynkin's isomorphism for each of the terms of the sum. However,
one gets many different terms that give identical contributions, many terms with contributions that cancel out, being of opposite sign, and many terms that give zero contribution. By regrouping the terms surviving to cancellation into powers of $n$, 
one gets a combinatorial structure known as a topological expansion.
The terms of the expansion correspond to ribbon graphs 
with $m(\nu)$ vertices,  obtained by pairing and gluing 
$\vert\nu\vert$ ribbon half-edges. Each gluing may be straight or twisted. The power of $n$ is then given by the number of 
cycles formed by the boundary components of the ribbons. 
It can be also expressed in terms of genera of compact surfaces, orientable or not.

The topological expansion has been introduced by 't Hooft for the study of Quantum Chromodynamics \cite{tHooft74TopologicalExpansion},
and further developed by Brézin, Itzykson, Parisi and Zuber
\cite{BIPZ78PlanarDiagrams,IZ80PlanarApproximation}.
Nowadays there is a broad, primarily physics literature on this topic. In particular, topological expansion of one matrix or several matrix integrals is used for the enumeration of maps on surfaces and other graphical objects
\cite{BIZ80GraphicalEnumeration,Zvonkin97MatrixIntegrals,
LandoZvonkine04GraphsSurfaces,Eynard16CountingSurfaces}.
Compared to the case of one matrix integrals, where each ribbon edge comes only with a scalar weight, in our setting each ribbon edge will be associated to a measure on random walk paths between two vertices $x_{k}$ and $x_{k'}$ on the network.
For an introduction to the topological expansion we refer to\cite{Zvonkin97MatrixIntegrals,
EynardKimuraRibault18RandomMatrices}.

We will further extend our framework and consider matrix-valued free fields twisted by a connection of orthogonal ($\beta=1$),
unitary ($\beta=2$) or symplectic ($\beta=4$) matrices. We rely for this on results of Kassel and Lévy for twisted vector-valued GFFs \cite{KasselLevy16CovSym}. If $\Phi$ is the matrix-valued GFF twisted by a connection $U$ and 
$(\lambda_{1},\dots,\lambda_{n})$
its fields of eigenvalues, then the isomorphism for
\begin{displaymath}
\Big\langle \prod_{l=1}^{m(\nu)}
\Big(\sum_{i=1}^{n}
\lambda_{i}(x_{\nu_{1}+\dots +\nu_{l}})^{\nu_{l}}
\Big)
F\Big(\dfrac{1}{2}\sum_{i=1}^{n}
\lambda_{i}^{2}\Big)\Big\rangle_{\beta,n}^{U}
\end{displaymath}
involves a topological expansion where instead of $n$ to the power the number of cycles in a ribbon graph appears a product of traces of holonomies of the connection, one per each boundary cycle in the ribbon graph. The holonomies are taken along loops made of concatenated random walk paths. Such traces of holonomies along loops are called Wilson loop observables.

\section{Preliminaries}
\label{SecPrelim}

\subsection{Quaternions, symplectic matrices and quaternionic Hermitian matrices}

$\mathbb{H}$ will denote the skew (non-commutative) field of quaternions. Its elements are of the form
\begin{displaymath}
q=q_{r} + q_{i}\textbf{i} + q_{j} \textbf{j} + q_{k} \textbf{k},
\end{displaymath}
where $q_{r},q_{i},q_{j},q_{k}\in\mathbb{R}$, and 
$\textbf{i}$, $\textbf{j}$ and $\textbf{k}$ satisfy the relations
\begin{displaymath}
\textbf{i}^{2}=\textbf{j}^{2}=\textbf{k}^{2}=-1,
\end{displaymath}
\begin{displaymath}
\textbf{i}\textbf{j}=-\textbf{j}\textbf{i}=\textbf{k},
\qquad
\textbf{j}\textbf{k}=-\textbf{k}\textbf{j}=\textbf{i},
\qquad
\textbf{k}\textbf{i}=-\textbf{i}\textbf{k}=\textbf{j}.
\end{displaymath}
The coefficient $q_{r}$ is the real part $\Re(q)$ of the quaternion $q$.
The algebra of quaternions has the usual representation over $2\times 2$ complex matrices:
\begin{displaymath}
\mathtt{C}(q)=
\left(
\begin{array}{cc}
q_{r}+q_{i}\textbf{i} & -q_{j}-q_{k}\textbf{i} \\ 
q_{j}-q_{k}\textbf{i} & q_{r}-q_{i}\textbf{i}
\end{array} 
\right).
\end{displaymath}
The conjugate of a quaternion is given by
\begin{displaymath}
\bar{q}=q_{r} - q_{i}\textbf{i} - q_{j} \textbf{j} - q_{k} \textbf{k}.
\end{displaymath}
The matrix $\mathtt{C}(\bar{q})$ is the adjoint of  
$\mathtt{C}(q)$ for the Hermitian inner product:
\begin{displaymath}
\mathtt{C}(\bar{q})=\mathtt{C}(q)^{\ast}.
\end{displaymath}
If $q_{1},q_{2}\in\mathbb{H}$,
\begin{displaymath}
\overline{q_{1}q_{2}}=
\bar{q_{2}}\bar{q_{1}}.
\end{displaymath}
The absolute value $\vert q\vert$ is given by
\begin{displaymath}
\vert q\vert^{2}=
q_{r}^{2}+q_{i}^{2}+q_{j}^{2}+q_{k}^{2}=
q\bar{q}=\bar{q}q=\det(\mathtt{C}(q)).
\end{displaymath}
For more on quaternions, we refer to \cite{HandbookQuaternions14}.

We will denote by $\mathcal{M}_{n}(\mathbb{H})$ the ring of $n\times n$ matrices with quaternionic entries. The product of matrices is defined in the same way as for matrices over a commutative field:
\begin{displaymath}
(AB)_{ij}=\sum_{k=1}^{n}A_{ik}B_{kj},\qquad
i,j\in\{1,\dots,n\}.
\end{displaymath}
One can associate to a matrix $M\in\mathcal{M}_{n}(\mathbb{H})$ a
$2n\times 2n$ matrix with complex entries, by replacing each entry
$M_{ij}$ by a $2\times 2$ block
$\mathtt{C}(M_{ij})$. The resulting matrix in
$\mathcal{M}_{2n}(\mathbb{C})$ will be again denoted
$\mathtt{C}(M)$. 
The map $\mathtt{C}$ is then a morphism of rings from
$\mathcal{M}_{n}(\mathbb{H})$ to
$\mathcal{M}_{2n}(\mathbb{C})$. 

The trace of a matrix of quaternions is defined as usually:
\begin{displaymath}
\Tr(M)=\sum_{i=1}^{n}M_{ii}.
\end{displaymath}
However, it is more convenient to deal with the real part of the trace, as
\begin{displaymath}
\Re(\Tr(M))=\dfrac{1}{2}\Tr(\mathtt{C}(M)),
\end{displaymath}
and for $A,B\in \mathcal{M}_{n}(\mathbb{H})$,
\begin{displaymath}
\Re(\Tr(AB))=\Re(\Tr(BA)).
\end{displaymath}
Note that in general there is no equality between
$\Tr(AB)$ and $\Tr(BA)$, since the product of quaternions is not commutative.

The quaternion adjoint of a matrix $M\in \mathcal{M}_{n}(\mathbb{H})$, denoted $M^{\ast}$, extends the notion of adjoint for matrices with complex entries:
\begin{displaymath}
(M^{\ast})_{ij}=\overline{M_{ji}},
\qquad i,j\in\{1,\dots,n\},
\end{displaymath}
where $\overline{M_{ji}}$ is the quaternion conjugate. We have that
\begin{displaymath}
\mathtt{C}(M^{\ast})=\mathtt{C}(M)^{\ast},
\end{displaymath}
where on the left-hand side $^{\ast}$ denotes the quaternion adjoint, and on the right-hand side, $^{\ast}$ denotes the complex adjoint. If 
$A,B\in \mathcal{M}_{n}(\mathbb{H})$,
\begin{displaymath}
(AB)^{\ast}=B^{\ast}A^{\ast}.
\end{displaymath}

The quaternionic unitary group, $U(n,\mathbb{H})$ is the set of matrices
$U\in \mathcal{M}_{n}(\mathbb{H})$ satisfying
\begin{displaymath}
U U^{\ast} = I_{n},
\end{displaymath}
$I_{n}$ being the $n\times n$ identity matrix. The relation above is equivalent to
\begin{displaymath}
U^{\ast} U = I_{n}.
\end{displaymath}
If $U\in U(n,\mathbb{H})$,
\begin{displaymath}
\det(\mathtt{C}(U))=1,
\end{displaymath}
and $\mathtt{C}(U)\in SU(2n)$. The image of 
$U(n,\mathbb{H})$ by $\mathtt{C}$ is the compact symplectic group
$Sp(n)$, a subgroup of $SU(2n)$. 

The set of $n\times n$ quaternionic Hermitian matrices
$\mathcal{H}_{n}(\mathbb{H})$ is composed of matrices
$M\in\mathcal{M}_{n}(\mathbb{H})$ satisfying
\begin{displaymath}
M^{\ast}=M.
\end{displaymath}
A matrix $M\in\mathcal{M}_{n}(\mathbb{H})$ 
is quaternionic Hermitian if and only if $\mathtt{C}(M)$ is
complex Hermitian. The diagonal entries of a quaternionic Hermitian matrix are real. Given $M\in \mathcal{H}_{n}(\mathbb{H})$, there exists
$U\in U(n,\mathbb{H})$ such that $U^{\ast} M U$ is diagonal with real entries:
\begin{displaymath}
U^{\ast} M U =
\operatorname{Diag}(\lambda_{1},\lambda_{2},\dots,\lambda_{n}),
\end{displaymath}
with $\lambda_{1}\geq\lambda_{2}\geq\dots\geq\lambda_{n}\in\mathbb{R}$.
The family $\lambda_{1},\lambda_{2},\dots,\lambda_{n}$ is uniquely determined. This is also the family of ordered eigenvalues of
$\mathtt{C}(M)$, but the multiplicities have to be doubled. 
$\lambda_{1},\lambda_{2},\dots,\lambda_{n}$ are the right eigenvalues of
$M$, and form the right spectrum of $M$, i.e. the set of 
$\lambda\in\mathbb{H}$, for which the equation
\begin{displaymath}
Mx = x\lambda
\end{displaymath}
has a non-zero solution. There is also a notion of left spectrum, corresponding to the equation
$Mx = \lambda x$.
But the right and the left spectra do not necessarily coincide, even for quaternionic Hermitian matrices. For details on eigenvalues of quaternionic matrices we refer to
\cite{Zhang97QuaternionMatrices,HandbookQuaternions14}.
For $M\in\mathcal{H}_{n}(\mathbb{H})$,
the trace of $M$ also equals
\begin{displaymath}
\Tr(M)=\Re(\Tr(M))=\sum_{i=1}^{n}\lambda_{i}.
\end{displaymath}

\subsection{BFS-Dynkin isomorphism}
\label{ssecDynkin}

Let $\mathcal{G}=(V,E)$ be a finite undirected connected graph. 
We do not allow multiple edges or self-loops. 
Edges $\{x,y\}\in E$ are endowed with conductances $C(x,y)=C(y,x)>0$. 
There also a not identically zero killing measure
$(\kappa(x))_{x\in V}$, with $\kappa(x)\geq 0$. 
The graph $\mathcal{G}$ will be further referred to as
an electrical network. 
Let $(X_{t})_{t\geq 0}$ be the Markov jump process to nearest neighbors with jump rates given by the conductances. 
$(X_{t})_{t\geq 0}$ is also killed by $\kappa$. 
Let $\zeta\in (0,+\infty]$ be the first time 
$(X_{t})_{t\geq 0}$ gets killed by $\kappa$. 

Let $(G(x,y))_{x,y\in V}$ be the Green's function:
\begin{displaymath}
G(x,y)=G(y,x)=
\E\Big[\int_{0}^{\zeta}
\1_{\{X_{t}=y\}} dt
\big\vert X_{0}=x
\Big].
\end{displaymath}
Let $p_{t}(x,y)$ be the transition probabilities of
$(X_{t})_{0\leq t <\zeta}$. Then
$p_{t}(x,y)=p_{t}(y,x)$ and
\begin{displaymath}
G(x,y)=\int_{0}^{+\infty}p_{t}(x,y) dt.
\end{displaymath}
Let $\mathbb{P}_{t}^{x,y}$ be the bridge probability measure from $x$ to $y$, where one conditions by $t<\zeta$. 
Let $\mu^{x,y}$ be the following measure on paths from $x$ to $y$ in finite time:
\begin{equation}
\label{Eqmuxy}
\mu^{x,y}(d\gamma)=\int_{0}^{+\infty}
\mathbb{P}_{t}^{x,y}(d\gamma)
p_{t}(x,y) dt.
\end{equation}
The measure $\mu^{x,y}$ has total mass $G(x,y)$. 
The image of $\mu^{x,y}$ by time reversal is $\mu^{y,x}$.

Given $x_{1},x_{2},\dots,x_{2r}\in V$ and
$\textbf{p}=\{\{a_{1},b_{1}\},\dots,\{a_{r},b_{r}\}\}$
a partition in pairs of
$\{1,\dots, 2r\}$, 
$\mu^{x_{1},x_{2},\dots, x_{2r}}_{\textbf{p}}
(d\gamma_{1},\dots,d\gamma_{r})$ will denote the following product measure on $r$-tuples of paths:
\begin{displaymath}
\mu^{x_{1},x_{2},\dots, x_{2r}}_{\textbf{p}}
(d\gamma_{1},\dots,d\gamma_{r})=
\prod_{i=1}^{r}\mu^{x_{a_{i}},x_{b_{i}}}(d\gamma_{i}).
\end{displaymath}
We will sometimes, in particular in Section \ref{SubSecMatrixTwist}, use the convention that
$a_{i}<b_{i}$.
The order of the pairs in $\textbf{p}$ will not be important.

In general, for a path $\gamma$ and
$x\in V$, $L(\gamma)$ will denote the occupation field of
$\gamma$,
\begin{displaymath}
L(\gamma)(x)=\int_{0}^{T(\gamma)}\1_{\{\gamma(t)=x\}} dt,
\end{displaymath}
where $T(\gamma)$ is the life-time of the path.

The (scalar real) Gaussian free field (GFF) $(\phi(x))_{x\in V}$ will denote here the centered Gaussian process with covariance 
\begin{displaymath}
\E[\phi(x)\phi(y)] = G(x,y).
\end{displaymath}
The distribution of $(\phi(x))_{x\in V}$ is given by
\begin{equation}
\label{EqDensityGFF}
\dfrac{1}{((2\pi)^{\operatorname{Card}(V)} \det G)^{\frac{1}{2}}}
\exp\Big(
-\dfrac{1}{2}\sum_{x\in V}\kappa(x)\varphi(x)^{2}
-\dfrac{1}{2}\sum_{\{x,y\}\in E}C(x,y)(\varphi(y)-\varphi(x))^{2}
\Big) \prod_{x\in V} d\varphi(x).
\end{equation}

The following isomorphism relates the square of the GFF 
$(\phi(x)^{2})_{x\in V}$ and the occupation fields of paths under the measures $\mu^{x,y}$. It first appears in the work of
Brydges, Fröhlich and Spencer \cite{BFS82Loop}
(see also \cite{Frohlich82Triv,BFSok83one,BFSok83two}) 
and then in that of Dynkin
\cite{Dynkin1984Isomorphism,Dynkin1984IsomorphismPresentation}
(see also \cite{Dynkin1984PolynomOccupField}). 
It is also related to earlier works of Symanzik \cite{Symanzik65Scalar,Symanzik66Scalar,Symanzik1969QFT}.
For more on isomorphism theorems, see
\cite{MarcusRosen2006MarkovGaussianLocTime,
Sznitman2012LectureIso}.

\begin{thm}[Brydges-Fröhlich-Spencer \cite{BFS82Loop},
Dynkin
\cite{Dynkin1984Isomorphism,Dynkin1984IsomorphismPresentation}]
\label{ThmIsoDynkin}
Let $r\in\mathbb{N}\setminus \{0\}$, 
$x_{1},x_{2},\dots,x_{2r}\in V$
and $F$ a bounded measurable function 
$\mathbb{R}^{V} \rightarrow \mathbb{R}$. Then
\begin{displaymath}
\mathbb{E}\Big[
\prod_{k=1}^{2r}\phi(x_{k})
F(\phi^{2}/2)
\Big]=
\sum_{\substack{\textbf{p} \text{ partition}
\\\text{of } \{1,\dots, 2r\}
\\\text{in pairs}
}} 
\int\displaylimits_{\gamma_{1},\dots \gamma_{r}}
\E\Big[
F(\phi^{2}/2+L(\gamma_{1})+\dots + L(\gamma_{r}))
\Big]
\mu^{x_{1},x_{2},\dots, x_{2r}}_{\textbf{p}}
(d\gamma_{1},\dots,d\gamma_{r}).
\end{displaymath}
\end{thm}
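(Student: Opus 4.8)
The plan is to reduce to exponential functionals $F$ and then recognize both sides as Laplace transforms of finite signed measures on $[0,+\infty)^{V}$. Since $V$ is finite, $\prod_{k}\phi(x_{k})$ has moments of all orders, and $\mu^{x,y}$ has finite total mass $G(x,y)$, both sides of the asserted identity are, as functionals of the bounded measurable $F$, integration against a finite (signed) Borel measure on $[0,+\infty)^{V}$: the left-hand side against the image of $\prod_{k}\phi(x_{k})\,d\mathbb{P}$ under $\phi\mapsto\phi^{2}/2$, the right-hand side against $\sum_{\textbf{p}}\int\rho_{\textbf{p},\gamma}\,\mu^{x_{1},\dots,x_{2r}}_{\textbf{p}}(d\gamma)$, where $\rho_{\textbf{p},\gamma}$ is the law of $\phi^{2}/2+L(\gamma_{1})+\dots+L(\gamma_{r})$. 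A finite signed measure on $[0,+\infty)^{V}$ is determined by its Laplace transform, so it is enough to prove the identity when $F(y)=\exp(-\langle\chi,y\rangle):=\exp(-\sum_{x\in V}\chi(x)y(x))$ with $\chi\in[0,+\infty)^{V}$ arbitrary; the general bounded measurable case then follows by a monotone class argument.

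Next I would compute both sides for such an $F$. Let $\mathcal{G}^{\chi}$ be the electrical network with the same conductances but killing measure $\kappa+\chi$, let $G_{\chi}$ be its Green's function and $\E_{\chi}$ expectation under the associated GFF. Comparing the densities \eqref{EqDensityGFF} (the quadratic form acquires the extra term $\frac12\sum_{x}\chi(x)\varphi(x)^{2}$, i.e. the generator changes by $+\operatorname{diag}(\chi)$) yields, for every integrable $H$,
\[
\E\Big[e^{-\frac12\sum_{x\in V}\chi(x)\phi(x)^{2}}\,H(\phi)\Big]=\Big(\frac{\det G_{\chi}}{\det G}\Big)^{1/2}\E_{\chi}[H(\phi)].
\]
Taking $H=\prod_{k}\phi(x_{k})$ and using Wick's formula $\E_{\chi}[\prod_{k}\phi(x_{k})]=\sum_{\textbf{p}}\prod_{i}G_{\chi}(x_{a_{i}},x_{b_{i}})$ evaluates the left-hand side. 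On the right-hand side, the occupation-field shift contributes the factor $\exp(-\sum_{i}\sum_{x}\chi(x)L(\gamma_{i})(x))$, which pulls out of the Gaussian expectation, and $\E[e^{-\frac12\sum_{x}\chi(x)\phi(x)^{2}}]=(\det G_{\chi}/\det G)^{1/2}$ by the same density comparison with $H=1$.

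The one place where the random walk genuinely enters is the Feynman--Kac identity
\[
\int_{\gamma}\exp\Big(-\sum_{x\in V}\chi(x)L(\gamma)(x)\Big)\,\mu^{x,y}(d\gamma)=G_{\chi}(x,y),
\]
which I would obtain by writing $\mu^{x,y}$ as in \eqref{Eqmuxy}: the left-hand side equals $\int_{0}^{\infty}\E_{X_{0}=x}[e^{-\int_{0}^{t}\chi(X_{s})\,ds}\1_{X_{t}=y}\1_{t<\zeta}]\,dt=\E_{X_{0}=x}[\int_{0}^{\zeta}e^{-\int_{0}^{t}\chi(X_{s})\,ds}\1_{X_{t}=y}\,dt]$, and discounting a trajectory by $e^{-\int\chi(X_{s})\,ds}$ is exactly adding $\chi$ to the killing rate, so this is $G_{\chi}(x,y)$. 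Substituting into the right-hand side, and using that the path measures attached to distinct pairs of $\textbf{p}$ are independent, turns it into $(\det G_{\chi}/\det G)^{1/2}\sum_{\textbf{p}}\prod_{i}G_{\chi}(x_{a_{i}},x_{b_{i}})$, which matches the left-hand side term by term. Since $L(\gamma)$ and the total mass of $\mu^{x,y}$ are invariant under time reversal and $G_{\chi}$ is symmetric, this identification does not depend on how the two elements of each pair of $\textbf{p}$ are ordered, consistently with the statement.

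I do not expect a serious obstacle: the three ingredients — Laplace transforms separate finite measures on $[0,+\infty)^{V}$, the Radon--Nikodym computation for the GFF with modified killing, and the Feynman--Kac representation of $G_{\chi}$ — are each routine, and the remaining combinatorics is just Wick's theorem. The only step demanding a little care is the reduction: one must verify that the two signed measures in play are indeed finite, so that equality of their Laplace transforms forces equality of the measures; this uses $\operatorname{Card}(V)<\infty$, $G(x,y)<\infty$ and the boundedness of $F$. (An alternative, more computational route would iterate Gaussian integration by parts in the $\phi(x_{k})$ variables together with a resolvent expansion of $G$, but the Laplace-transform argument above is cleaner.)
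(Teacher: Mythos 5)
The paper does not prove Theorem \ref{ThmIsoDynkin}: it is imported verbatim from Brydges--Fr\"ohlich--Spencer and Dynkin and used as a black box, so there is no internal proof to compare yours against. Judged on its own, your argument is correct and is in fact the standard route to this identity: reduce to $F(y)=e^{-\langle\chi,y\rangle}$ by uniqueness of Laplace transforms of finite signed measures on $[0,+\infty)^{V}$ together with a functional monotone class argument; identify the tilting $e^{-\frac12\sum_x\chi(x)\varphi(x)^2}$ of the density \eqref{EqDensityGFF} as the GFF with killing measure $\kappa+\chi$, up to the factor $(\det G_{\chi}/\det G)^{1/2}$; evaluate the left-hand side by Wick's formula for $\E_{\chi}$; and match the right-hand side via the Feynman--Kac identity $\int e^{-\langle\chi,L(\gamma)\rangle}\mu^{x,y}(d\gamma)=G_{\chi}(x,y)$, which indeed follows from \eqref{Eqmuxy} because discounting by $e^{-\int_0^t\chi(X_s)\,ds}$ is the same as adding $\chi$ to the killing rate. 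All the finiteness checks you flag (finitely many vertices, $\mu^{x,y}$ of total mass $G(x,y)$, Gaussian moments) are the right ones, and your remark that the orientation of each pair in $\textbf{p}$ is immaterial, by time-reversal invariance of $L(\gamma)$ and of the total mass, correctly squares with the paper's convention that the order within pairs does not matter. I see no gap.
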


\subsection{Connections, gauge equivalence and Wilson loops}
\label{SubSecConx}

Let $n\in\N$, $n\geq 2$. 
Let $\mathbb{U}$ be the group of either $n\times n$ orthogonal matrices $O(n)$, or unitary matrices $U(n)$, or quaternionic unitary matrices $U(n,\mathbb{H})$.
We consider that each undirected edge in
$E$ consists of two directed edges of opposite direction.
We consider a family of matrices in $\mathbb{U}$,
$(U(x,y))_{\{x,y\}\in E}$, with
\begin{displaymath}
U(y,x)=U(x,y)^{\ast}=U(x,y)^{-1},
~~\forall \{x,y\}\in E.
\end{displaymath}
$(U(x,y))_{\{x,y\}\in E}$ is our \textit{connection} on the vector bundle with base space $\mathcal{G}$ 
and fiber respectively $\R^{n}$, $\C^{n}$ or $\mathbb{H}^{n}$.

Given a nearest-neighbor oriented discrete path
$\gamma=(y_{1},y_{2},\dots,y_{j})$, the
\textit{holonomy} of $U$ along $\gamma$ is the product
\begin{displaymath}
\mathsf{hol}^{U}(\gamma)=
U(y_{1},y_{2})U(y_{2},y_{3})\dots U(y_{j-1},y_{j}).
\end{displaymath}
If the path $\gamma$ is a nearest-neighbor path parametrized by continuous time, and does only a finite number of jumps, the holonomy
$\mathsf{hol}^{U}(\gamma)$ is defined as the holonomy along the discrete skeleton of $\gamma$. 
We will denote by $\overleftarrow{\gamma}$ 
the time-reversal of a path $\gamma$. 
We have that
\begin{equation}
\label{EqHolAst}
\mathsf{hol}^{U}(\overleftarrow{\gamma})=
\mathsf{hol}^{U}(\gamma)^{\ast}=
\mathsf{hol}^{U}(\gamma)^{-1}.
\end{equation}

Given a nearest-neighbor oriented discrete closed path
(i.e. a loop) $\gamma=(y_{1},y_{2},\dots,y_{j})$,
with $y_{j}=y_{1}$, we will consider the observable
\begin{displaymath}
\Tr(\mathsf{hol}^{U}(\gamma))
\end{displaymath}
in the orthogonal and unitary case, and
\begin{displaymath}
\Re(\Tr(\mathsf{hol}^{U}(\gamma)))=
\dfrac{1}{2}\Tr(\mathtt{C}(\mathsf{hol}^{U}(\gamma)))
\end{displaymath}
in the quaternionic unitary case. Such observables are called
\textit{Wilson loops} \cite{Wilson74Quarks}.
Note that the Wilson loop observable does not depend on where 
the loop $\gamma$ is rooted.
Indeed, if 
$\tilde{\gamma}$ is the loop visiting
$(y_{i},\dots,y_{j},y_{1},\dots,y_{i-1},y_{i})$
$(i\in\{2,\dots, j\})$, and if
$\gamma'$ is the path visiting
$(y_{1},\dots,y_{i})$ then
\begin{displaymath}
\mathsf{hol}^{U}(\tilde\gamma)=
\mathsf{hol}^{U}(\gamma')^{-1}
\mathsf{hol}^{U}(\gamma)
\mathsf{hol}^{U}(\gamma').
\end{displaymath}

Given another family of matrices in $\mathbb{U}$,
$(\mathfrak{U}(x))_{x\in V}$, this time on top of vertices, it induces a \textit{gauge transformation} on the connection $U$:
\begin{displaymath}
(U(x,y))_{\{x,y\}\in E}
\longmapsto 
(\mathfrak{U}(x)^{-1}U(x,y)\mathfrak{U}(y))_{\{x,y\}\in E}.
\end{displaymath}
Two connections related by a gauge transformation are said to be
\textit{gauge equivalent}. 
A connection is \textit{trivial} if it is gauge equivalent to the identity connection. 
A criterion for triviality is that along any nearest-neighbor loop
$\gamma=(y_{1},y_{2},\dots,y_{j})$, with $y_{j}=y_{1}$,
\begin{displaymath}
\mathsf{hol}^{U}(\gamma) = I_{n}.
\end{displaymath}
In general, any two gauge equivalent connections have the same Wilson loop observables. The converse is also true (but non-obvious): the collection of all possible Wilson loop observables characterizes a connection up to gauge transformations
\cite{Giles81Reconstruct,Sengupta94GaugeInvariant,
TLevy04WilsonLoops}.

\subsection{BFS-Dynkin isomorphism for the Gaussian free field twisted by a connection}
\label{SubSecCovSym}

In \cite{KasselLevy16CovSym} Kassel and Lévy introduced the 
vector-valued GFF twisted by an orthogonal/unitary connection, and generalized the isomorphisms with random walks to this case. Here we will do a less abstract, more computational presentation of the same object. Kassel and Lévy's isomorphisms rely on a
covariant Feynman-Kac formula (\cite{BFSei79Higgs} and 
Theorem 3.1 in \cite{KasselLevy16CovSym}).

Let us consider on top of the electrical network
$\mathcal{G}=(V,E)$ an orthogonal connection
$(U(x,y))_{\{x,y\}\in E}$, $U(x,y)\in O(n)$.
The Green's function $G^{U}$ associated to the connection $U$
is a function from $V\times V$ to 
$\mathcal{M}_{n}(\mathbb{R})$ (i.e. the $n\times n$ matrices with real entries), with the entries given by
\begin{displaymath}
G^{U}_{ij}(x,y)= 
\int_{\gamma}\mathsf{hol}^{U}_{ij}(\gamma)\mu^{x,y}(d\gamma),
~~x,y\in V,~i,j\in\{1,\dots,n\},
\end{displaymath}
where the measure on paths $\mu^{x,y}(d\gamma)$ is given by
\eqref{Eqmuxy}. Since the image of 
$\mu^{x,y}$ by time reversal is
$\mu^{y,x}$, and because of 
\eqref{EqHolAst}, we have that
\begin{displaymath}
G^{U}_{ij}(x,y) = G^{U}_{ji}(y,x),\qquad
G^{U}_{ij}(x,x) = G^{U}_{ji}(x,x).
\end{displaymath}
i.e. $G^{U}(x,y)^{\mathsf{T}} = G^{U}(y,x)$
and $G^{U}(x,x)$ is symmetric.
One can see $G^{U}$ as a symmetric linear operator on
$(\R^{n})^{V}$. It is positive definite
(see Proposition 2.15 in \cite{KasselLevy16CovSym}).
We will denote by $\det G^{U}$ the determinant of this operator.

The $\R^{n}$-valued Gaussian free field on $\mathcal{G}$ twisted by the connection $U$ is a random Gaussian function
$\widehat{\phi}: V \rightarrow \R^{n}$ 
($\widehat{\phi}(x)=(\widehat{\phi}_{1}(x),\dots, 
\widehat{\phi}_{n}(x))$) 
with the distribution given by
\begin{displaymath}
\dfrac{1}{Z_{\rm GFF}^{U}}
\exp\Big(
-\dfrac{1}{2}\sum_{x\in V}\kappa(x)\Vert \widehat{\varphi}(x)\Vert^{2}
-\dfrac{1}{2}\sum_{\{x,y\}\in E}
C(x,y)\Vert \widehat{\varphi}(x)-U(x,y)\widehat{\varphi}(y)\Vert^{2}
\Big)
\prod_{x\in V}\prod_{i=1}^{n}d\widehat{\varphi}(x)_{i},
\end{displaymath}
where $\Vert\cdot\Vert$ is the usual
$L^{2}$ norm on $\R^{n}$ and
\begin{displaymath}
Z_{\rm GFF}^{U}=
((2\pi)^{n \operatorname{Card}(V)} \det G^{U})^{\frac{1}{2}}.
\end{displaymath}
Note that if $\{x,y\}\in E$,
\begin{displaymath}
\Vert \widehat{\varphi}(x)-U(x,y)\widehat{\varphi}(y)\Vert^{2}
=
\Vert \widehat{\varphi}(y)-U(y,x)\widehat{\varphi}(x)\Vert^{2}.
\end{displaymath}

We have that $\E[\widehat{\phi}]\equiv 0$.
As for the covariance structure, we have
(see Proposition 4.1 in \cite{KasselLevy16CovSym}):
\begin{displaymath}
\E[\widehat{\phi}_{i}(x)\widehat{\phi}_{j}(y)]=
G^{U}_{ij}(x,y).
\end{displaymath}
If $(\mathfrak{U}(x))_{x\in V}$ is a gauge transformation, then 
$(\mathfrak{U}(x)\widehat{\phi}(x))_{x\in V}$ is the Gaussian free field related to the connection
$(\mathfrak{U}(x)^{-1}U(x,y)\mathfrak{U}(y))_{\{x,y\}\in E}$.
In particular, if the connection $U$ is trivial, 
the field $\widehat{\phi}$ can be reduced to $n$ i.i.d. copies of the scalar GFF with distribution \eqref{EqDensityGFF}.

In \cite{KasselLevy16CovSym}, Theorems 5.1 and 7.3, Kassel and Lévy gave a BFS-Dynkin-type isomorphism for GFFs twisted by connections. 

\begin{thm}[Kassel-Lévy
\cite{KasselLevy16CovSym}]
\label{ThmIsoKL}
Let $r\in\mathbb{N}\setminus \{0\}$, 
$x_{1},x_{2},\dots,x_{2r}\in V$,
$J(1),J(2),\dots, J(2r)\in \{1,\dots,n\}$
and $F$ a bounded measurable function 
$\mathbb{R}^{V} \rightarrow \mathbb{R}$. Then
\begin{multline*}
\mathbb{E}\Big[
\prod_{k=1}^{2r}\widehat{\phi}_{J(k)}(x_{k})
F(\Vert\widehat{\phi}\Vert^{2}/2)
\Big]\\
=
\sum_{\substack{\text{partitions of}
\\\{1,\dots, 2r\}
\\\text{in pairs}
\\\{\{a_{1},b_{1}\},\dots,\{a_{r},b_{r}\}\}}} 
\int\displaylimits_{\gamma_{1},\dots \gamma_{r}}
\E\Big[
F(\Vert\widehat{\phi}\Vert^{2}/2
+L(\gamma_{1})+\dots + L(\gamma_{r}))
\Big]
\prod_{i=1}^{r}
\mathsf{hol}^{U}_{J(a_{i})J(b_{i})}(\gamma_{i})
\mu^{x_{a_{i}},x_{b_{i}}}(d\gamma_{i}),
\end{multline*}
where the sum runs over the $(2r)!/(2^{r} r!)$ partitions of
$\{1,\dots, 2r\}$ in pairs.
\end{thm}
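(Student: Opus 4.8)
The plan is to reduce to exponential test functions and then carry out an explicit Gaussian computation, using Wick's formula on the field side and a Feynman--Kac reweighting on the path side, the two being matched through the single substitution $\kappa\mapsto\kappa+\lambda$ of the killing measure.

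First I would observe that, with $r$, the $x_k$ and the $J(k)$ fixed, both sides depend on $F$ only through its restriction to $[0,\infty)^{V}$, and that each side is the integral of $F$ against a finite signed measure on $[0,\infty)^{V}$: on the left, the pushforward of the signed measure $\big(\prod_{k}\widehat\phi_{J(k)}(x_k)\big)\,d\mathbb{P}$ under $\widehat\phi\mapsto\|\widehat\phi\|^{2}/2$, finite because the Gaussian prefactor lies in $\mathbb{L}^{1}$; on the right, a finite sum of pushforwards of $\big(\prod_i\mathsf{hol}^{U}_{J(a_i)J(b_i)}(\gamma_i)\big)\,\mu^{x_1,x_2,\dots,x_{2r}}_{\mathbf p}(d\gamma_1,\dots,d\gamma_r)\otimes d\mathbb{P}$, finite because each $\mu^{x,y}$ has total mass $G(x,y)<\infty$ and each orthogonal holonomy matrix entry is bounded by $1$. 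Since a finite signed measure on $[0,\infty)^{V}$ is determined by its Laplace transform (apply the classical statement for positive measures to the Jordan decomposition), it suffices to prove the identity for $F=F_\lambda$, $F_\lambda(\psi)=\exp(-\sum_{x\in V}\lambda(x)\psi(x))$, with $\lambda\colon V\to[0,\infty)$ arbitrary.

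For such $F_\lambda$ the factor $\exp(-\tfrac12\sum_x\lambda(x)\|\widehat\varphi(x)\|^{2})$ multiplied into the density of $\widehat\phi$ exactly replaces the killing measure $\kappa$ by $\kappa+\lambda$, so the left-hand side equals $\dfrac{Z^{U}_{\mathrm{GFF},\kappa+\lambda}}{Z^{U}_{\mathrm{GFF},\kappa}}\,\E_{\kappa+\lambda}\!\big[\prod_{k=1}^{2r}\widehat\phi_{J(k)}(x_k)\big]$, where $\E_{\kappa+\lambda}$ is the expectation under the $\R^{n}$-valued GFF twisted by the \emph{same} connection $U$ but with killing $\kappa+\lambda$ — a well-defined object, since the associated Green operator $G^{U}_{\kappa+\lambda}$ stays positive definite when the killing is increased. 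Wick's formula for this centered Gaussian vector, with $\E_{\kappa+\lambda}[\widehat\phi_i(x)\widehat\phi_j(y)]=(G^{U}_{\kappa+\lambda})_{ij}(x,y)$, rewrites it as $\dfrac{Z^{U}_{\mathrm{GFF},\kappa+\lambda}}{Z^{U}_{\mathrm{GFF},\kappa}}\sum_{\{\{a_i,b_i\}\}}\prod_{i=1}^{r}(G^{U}_{\kappa+\lambda})_{J(a_i)J(b_i)}(x_{a_i},x_{b_i})$, the sum over the $(2r)!/(2^{r}r!)$ pair partitions. On the right-hand side, pulling the deterministic occupation fields out of the Gaussian expectation gives $\E\big[F_\lambda(\|\widehat\phi\|^{2}/2+\textstyle\sum_i L(\gamma_i))\big]=\dfrac{Z^{U}_{\mathrm{GFF},\kappa+\lambda}}{Z^{U}_{\mathrm{GFF},\kappa}}\prod_i e^{-\sum_x\lambda(x)L(\gamma_i)(x)}$, and the key point is the covariant Feynman--Kac identity $\int\mathsf{hol}^{U}_{ij}(\gamma)\,e^{-\sum_x\lambda(x)L(\gamma)(x)}\,\mu^{x,y}(d\gamma)=(G^{U}_{\kappa+\lambda})_{ij}(x,y)$: raising the killing from $\kappa$ to $\kappa+\lambda$ multiplies $\mu^{x,y}$ by the survival weight $e^{-\sum_x\lambda(x)L(\gamma)(x)}$ while leaving the discrete skeleton, hence $\mathsf{hol}^{U}(\gamma)$, untouched, so the left-hand side here is exactly the defining integral formula for $G^{U}_{\kappa+\lambda}$. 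Substituting, the right-hand side becomes the same double sum, and the two sides agree term by term.

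The steps I would write with care are (i) the reduction — that the $F_\lambda$ form a determining class and that both sides are finite signed measures on $[0,\infty)^{V}$ (integrability of the Gaussian prefactor, finite mass of the path measures, boundedness of holonomy entries); and (ii) the twisted Feynman--Kac bookkeeping — checking that the single substitution $\kappa\mapsto\kappa+\lambda$ acts coherently as the reweighting $\mu^{x,y}\mapsto e^{-\sum_x\lambda(x)L(\gamma)(x)}\mu^{x,y}$ of the path measure, as the density change on the field, and as the covariance change $G^{U}\mapsto G^{U}_{\kappa+\lambda}$, with the ratios $Z^{U}_{\mathrm{GFF},\kappa+\lambda}/Z^{U}_{\mathrm{GFF},\kappa}$ appearing and cancelling identically on both sides. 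Everything else is Wick's formula and algebra. Alternatively, one may simply invoke Theorems 6.1 and 8.3 of \cite{KasselLevy16CovSym}, of which the statement is a direct transcription.
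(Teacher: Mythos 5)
Your proposal is correct. The paper itself gives no proof of this statement --- it is imported verbatim from Kassel--Lévy (Theorems 6.1 and 8.3 of \cite{KasselLevy16CovSym}), and the paper notes only that those results rest on a covariant Feynman--Kac formula; your self-contained argument (reduction to Laplace-transform test functions $F_\lambda$, the substitution $\kappa\mapsto\kappa+\lambda$ acting simultaneously on the field density, the path measure, and the Green operator, then Wick's formula) is exactly that route, carried out explicitly and with the integrability and determining-class points correctly identified.
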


Next we rewrite slightly the isomorphism above.
This will be used in Section \ref{SecProofs},
in the proofs of Lemmas \ref{LemMi} and \ref{LemMiHol}.
Let $(\widehat{X}^{(i)})_{i\geq 1}$ be an i.i.d. family of random Gaussian vectors with $n$ components, 
following the law $\mathcal{N}(0,I_{n})$. 

\begin{lemma}
\label{LemIsoKLrew}
Let $r\in\mathbb{N}\setminus \{0\}$, 
$x_{1},x_{2},\dots,x_{2r}\in V$,
$J(1),J(2),\dots, J(2r)\in \{1,\dots,n\}$
and $F$ a bounded measurable function 
$\mathbb{R}^{V} \rightarrow \mathbb{R}$. Then
\begin{multline*}
\mathbb{E}\Big[
\prod_{k=1}^{2r}\widehat{\phi}_{J(k)}(x_{k})
F(\Vert\widehat{\phi}\Vert^{2}/2)
\Big]\\
=\sum_{\substack{\text{partitions of}
\\\{1,\dots, 2r\}
\\\text{in pairs}
\\\{\{a_{1},b_{1}\},\dots,\{a_{r},b_{r}\}\}}} 
\int\displaylimits_{\gamma_{1},\dots \gamma_{r}}
\E\Big[
F(\Vert\widehat{\phi}\Vert^{2}/2
+L(\gamma_{1})+\dots + L(\gamma_{r}))
\Big]\times
\\
\times\E\Big[
\prod_{i=1}^{r}
\Big(\mathsf{hol}^{U}(\gamma_{i})
\widehat{X}^{(i)}\Big)_{J(a_{i})}
\widehat{X}^{(i)}_{J(b_{i})}
\Big]
\prod_{i=1}^{r}
\mu^{x_{a_{i}},x_{b_{i}}}(d\gamma_{i}),
\end{multline*}
where the sum runs over the $(2r)!/(2^{r} r!)$ partitions of
$\{1,\dots, 2r\}$ in pairs.
\end{lemma}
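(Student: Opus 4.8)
The plan is to derive the statement directly from Theorem~\ref{ThmIsoKL} by representing each holonomy matrix entry $\mathsf{hol}^{U}_{J(a_{i})J(b_{i})}(\gamma_{i})$ as a Gaussian second moment. The only input needed is the elementary identity that for a standard Gaussian vector $\widehat{X}\sim\mathcal{N}(0,I_{n})$ and any fixed matrix $M\in\mathcal{M}_{n}(\R)$,
\begin{displaymath}
\E\big[(M\widehat{X})_{a}\,\widehat{X}_{b}\big]
=\sum_{c=1}^{n}M_{ac}\,\E[\widehat{X}_{c}\widehat{X}_{b}]
=\sum_{c=1}^{n}M_{ac}\,\delta_{cb}
=M_{ab},
\qquad a,b\in\{1,\dots,n\}.
\end{displaymath}

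First I would fix a partition $\{\{a_{1},b_{1}\},\dots,\{a_{r},b_{r}\}\}$ of $\{1,\dots,2r\}$ into pairs and fix the paths $\gamma_{1},\dots,\gamma_{r}$, which are the integration variables of $\mu^{x_{a_{1}},x_{b_{1}}}\otimes\cdots\otimes\mu^{x_{a_{r}},x_{b_{r}}}$. For these fixed paths each holonomy $\mathsf{hol}^{U}(\gamma_{i})$ is a deterministic element of $O(n)\subset\mathcal{M}_{n}(\R)$, so the identity above applied with $M=\mathsf{hol}^{U}(\gamma_{i})$ gives
\begin{displaymath}
\mathsf{hol}^{U}_{J(a_{i})J(b_{i})}(\gamma_{i})
=\E\Big[\big(\mathsf{hol}^{U}(\gamma_{i})\,\widehat{X}^{(i)}\big)_{J(a_{i})}\,\widehat{X}^{(i)}_{J(b_{i})}\Big].
\end{displaymath}
Since the vectors $\widehat{X}^{(i)}$ are independent, the product over $i$ of these one-path expectations is the expectation of the product:
\begin{displaymath}
\prod_{i=1}^{r}\mathsf{hol}^{U}_{J(a_{i})J(b_{i})}(\gamma_{i})
=\E\Big[\prod_{i=1}^{r}\big(\mathsf{hol}^{U}(\gamma_{i})\,\widehat{X}^{(i)}\big)_{J(a_{i})}\,\widehat{X}^{(i)}_{J(b_{i})}\Big].
\end{displaymath}
Substituting this for $\prod_{i}\mathsf{hol}^{U}_{J(a_{i})J(b_{i})}(\gamma_{i})$ on the right-hand side of Theorem~\ref{ThmIsoKL} yields the claimed formula: the family $(\widehat{X}^{(i)})_{i\geq1}$ is taken independent of the twisted GFF $\widehat{\phi}$ and of the random walk measures $\mu^{x,y}$, so the $\widehat{\phi}$-expectation carrying $F(\Vert\widehat{\phi}\Vert^{2}/2+L(\gamma_{1})+\dots+L(\gamma_{r}))$ and the $(\widehat{X}^{(i)})$-expectation carrying the holonomy term factor and can be written side by side inside the $\gamma$-integral, exactly as in the statement.

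There is essentially no real obstacle here — the lemma is a reformulation, not a new computation. The only point requiring a word is the legitimacy of interchanging the finite sum over pairings, the $\gamma$-integrations, and the two expectations; this is immediate since $F$ is bounded, the entries of $\mathsf{hol}^{U}(\gamma_{i})$ have modulus at most $1$ (entries of an orthogonal matrix), the relevant Gaussian moments are finite, and each $\mu^{x_{a_{i}},x_{b_{i}}}$ has finite total mass $G(x_{a_{i}},x_{b_{i}})$, so Fubini's theorem applies throughout. (One also uses that under $\mu^{x,y}$ the path a.s.\ performs finitely many jumps, so that $\mathsf{hol}^{U}(\gamma_{i})$ is well defined as the holonomy along the discrete skeleton.)
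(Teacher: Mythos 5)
Your proposal is correct and follows essentially the same route as the paper: the paper's proof consists precisely of the identity $\E\big[\prod_{i=1}^{r}(\mathsf{hol}^{U}(\gamma_{i})\widehat{X}^{(i)})_{J(a_{i})}\widehat{X}^{(i)}_{J(b_{i})}\big]=\prod_{i=1}^{r}\mathsf{hol}^{U}_{J(a_{i})J(b_{i})}(\gamma_{i})$, which you derive from the Gaussian second moment $\E[(M\widehat{X})_{a}\widehat{X}_{b}]=M_{ab}$ and independence of the $\widehat{X}^{(i)}$, followed by substitution into Theorem~\ref{ThmIsoKL}. Your additional remarks on Fubini and measurability are harmless elaborations of what the paper leaves implicit.
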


\begin{proof}
Indeed,
\begin{displaymath}
\E\Big[
\prod_{i=1}^{r}
\Big(\mathsf{hol}^{U}(\gamma_{i})
\widehat{X}^{(i)}\Big)_{J(a_{i})}
\widehat{X}^{(i)}_{J(b_{i})}
\Big]=
\prod_{i=1}^{r}
\mathsf{hol}^{U}_{J(a_{i})J(b_{i})}(\gamma_{i}).
\qedhere
\end{displaymath}
\end{proof}

\subsection{Ribbon graphs and surfaces}
\label{SubSecRibbon}

Here we describe the ribbon graphs and the related two-dimensional surfaces. For more details, we refer to
\cite[Sections~2.2, 2.3]{Eynard16CountingSurfaces},
\cite[Chapter~2]{EynardKimuraRibault18RandomMatrices},
\cite[Sections~3.2, 3.3]{LandoZvonkine04GraphsSurfaces},
\cite{Zvonkin97MatrixIntegrals},
and 
\cite[Section~3.3]{MoharThomassen01GraphsSurf}.

Let $\nu=(\nu_{1},\nu_{2},\dots,\nu_{m})$, where
$m\geq 1$, and for all
$l\in\{1,2,\dots,m\}$,
$\nu_{l}\in\mathbb{N}\setminus\{0\}$.
We will denote
\begin{displaymath}
m(\nu) = m, \qquad
\vert\nu\vert = \sum_{l=1}^{m(\nu)}\nu_{l}.
\end{displaymath}
We will assume that $\vert\nu\vert$ is even.

Given $\nu$ as above, we consider $m(\nu)$ vertices, where each vertex has adjacent \textit{ribbon half-edges}: 
$\nu_{1}$ half-edges for the first vertex, $\nu_{2}$
for the second, etc. 
A ribbon half-edge is a two-dimensional object and carries an orientation. Also, the ribbon half-edges around each vertex are ordered in a cyclic way.
The ribbon half-edges are numbered from $1$ to 
$\vert\nu\vert$. 
See Figure \ref{FigRibbonHE} for an illustration with 
$\nu=(4,3,1)$.

\begin{figure}[ht]
\centering
\includegraphics[scale=0.8]{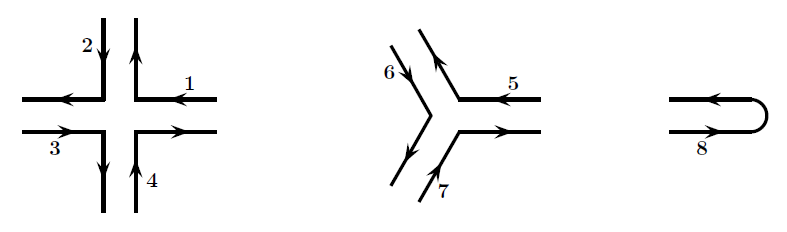}
\caption{Ribbon half-edges in the case of $\nu=(4,3,1)$.}
\label{FigRibbonHE}
\end{figure}

Since the total number of half-edges, $\vert\nu\vert$, is even, one can pair them to obtain a \textit{ribbon graph} (not necessarily connected), with $m(\nu)$ vertices and $\vert\nu\vert/2$ \textit{ribbon edges}. 
Each time we pair two half-edges, we can glue the corresponding ribbons in two different ways. Either the orientations of the two ribbon half-edges match, or are opposite. In the first case we get a \textit{straight ribbon edge}, in the second a \textit{twisted ribbon edge}. 
See Figure \ref{FigTwoGlueings}.
We call such a pairing of ribbon half-edges that keeps straight or twists the ribbons a \textit{ribbon pairing}.
Let $\mathcal{R}_{\nu}$ be the set of all possible ribbon pairings associated to $\nu$. The number of different ribbon pairings is
\begin{displaymath}
\operatorname{Card}(\mathcal{R}_{\nu})=
\dfrac{\vert\nu\vert !}
{2^{\vert\nu\vert/2}
(\vert\nu\vert/2)!} 2^{\vert\nu\vert/2} =
\dfrac{\vert\nu\vert !}
{(\vert\nu\vert/2)!}.
\end{displaymath}
Figure \ref{FigRibbonGraphstr} displays an example of a ribbon pairing with only straight edges, and Figure
\ref{FigRibbonGraph} an example with both straight and twisted edges.

\begin{figure}[ht]
\centering
\includegraphics[scale=0.8]{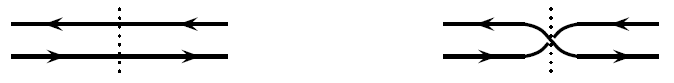}
\caption{A straight ribbon edge on the left and a twisted ribbon edge on the right.}
\label{FigTwoGlueings}
\end{figure}

\begin{figure}[ht]
\centering
\includegraphics[scale=0.8]{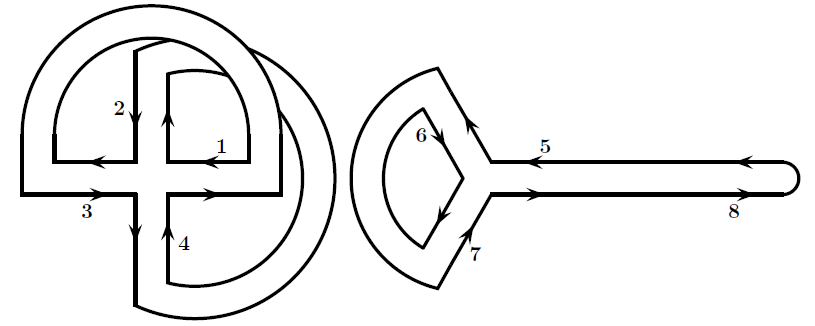}
\caption{A ribbon pairing in the case of $\nu=(4,3,1)$ with only straight edges.
The induced partition in pairs is
$\textbf{p}_{\nu}(\rho)=
\{\{1,3\},\{2,4\},\{5,8\},\{6,7\}\}$.}
\label{FigRibbonGraphstr}
\end{figure}

\begin{figure}[ht]
\centering
\includegraphics[scale=0.8]{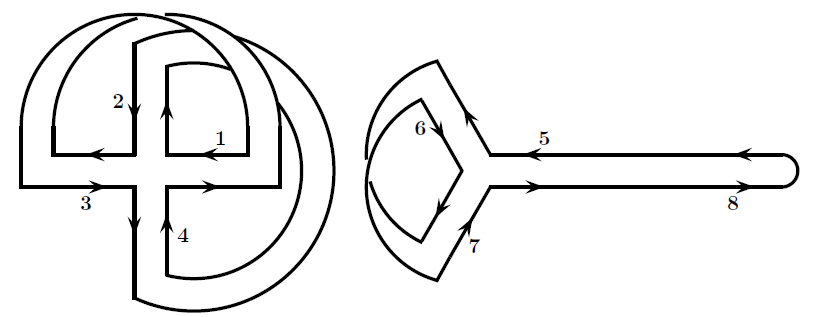}
\caption{A ribbon pairing in the case of $\nu=(4,3,1)$ with straight and twisted edges.
The induced partition in pairs $\textbf{p}_{\nu}(\rho)$ is
the same as on Figure \ref{FigRibbonGraphstr}.}
\label{FigRibbonGraph}
\end{figure}

A ribbon pairing $\rho\in\mathcal{R}_{\nu}$ induces a partition in pairs of $\{1,\dots,\vert\nu\vert\}$, denoted  
$\textbf{p}_{\nu}(\rho)$. 
The pairs correspond to the labels of ribbon half-edges associated into an edge. 
Conversely, given $\textbf{p}$ a partition in pairs of
$\{1,\dots,\vert\nu\vert\}$,
$\mathcal{R}_{\nu,\textbf{p}}$  will denote
the subset of $\mathcal{R}_{\nu}$
made of all ribbon pairings $\rho$ such that
$\textbf{p}_{\nu}(\rho)=\textbf{p}$
($\operatorname{Card}(\mathcal{R}_{\nu,\textbf{p}})
=2^{\vert\nu\vert/2}$).

Given a ribbon pairing $\rho\in\mathcal{R}_{\nu}$, one can see the corresponding ribbon graph as a two-dimensional compact bordered surface (not necessarily connected). 
Let $f_{\nu}(\rho)$ denote the number of the connected components of the boundary, 
that is to say the number of distinct cycles formed by the borders of ribbons. 
On Figure \ref{FigRibbonGraphstr}, 
$f_{\nu}(\rho)=3$, and on Figure \ref{FigRibbonGraph}, 
$f_{\nu}(\rho)=2$.
Then, one can glue along each connected component of the boundary a disk ($f_{\nu}(\rho)$ disks in total), 
and obtain in this way
a two-dimensional compact surface (not necessarily connected) without boundary. 
We will denote it 
$\Sigma_{\nu}(\rho)$, and consider it up to diffeomorphisms.
On the example of Figure \ref{FigRibbonGraphstr},  
$\Sigma_{\nu}(\rho)$ has two connected components,
a torus on the left and a sphere on the right.
On the example of 
Figure \ref{FigRibbonGraph},  
$\Sigma_{\nu}(\rho)$ has again two connected components,
a Klein bottle on the left and a projective plane on the right. 
Observe that if all the edges are straight, the surfaces that appear are orientable. Let $\chi_{\nu}(\rho)$ denote the Euler's characteristic of $\Sigma_{\nu}(\rho)$. According to
Euler's formula,
\begin{displaymath}
\chi_{\nu}(\rho)= m(\nu) - \dfrac{\vert\nu\vert}{2}
+f_{\nu}(\rho).
\end{displaymath}

Next we introduce additional combinatorial objects related to the ribbon pairings. We will consider tuples 
$(k_{1},\mathtt{s}_{1},k_{2},\mathtt{s}_{2},
\dots,k_{j},\mathtt{s}_{j})$, where
$j\in\mathbb{N}\setminus\{0\}$, each of the
$k_{i}$ is in $\mathbb{N}\setminus\{0\}$, and each of the
$\mathtt{s}_{i}$ is one of the three abstract symbols
$\rightarrow$, $\leftarrow$ or $\doteq$. We will endow such tuples by an equivalence relation $\approx$ generated by the following rules.
\begin{itemize}
\item Cyclic permutation: for any $i\in\{2,\dots,j\}$,
$(k_{i},\mathtt{s}_{i},\dots, k_{j},\mathtt{s}_{j},
k_{1},\mathtt{s}_{1},\dots,k_{i-1},\mathtt{s}_{i-1})$ is identified to
$(k_{1},\mathtt{s}_{1},k_{2},\mathtt{s}_{2},
\dots,k_{j},\mathtt{s}_{j})$.
\item Reversal of the direction: 
$(k_{j},\mathtt{r}(\mathtt{s}_{j}),\dots,
k_{2},\mathtt{r}(\mathtt{s}_{2}),k_{1},\mathtt{r}(\mathtt{s}_{1}))$
is identified to 
\\$(k_{1},\mathtt{s}_{1},k_{2},\mathtt{s}_{2},
\dots,k_{j},\mathtt{s}_{j})$, where
$\mathtt{r}(\rightarrow)$ is $\leftarrow$,
$\mathtt{r}(\leftarrow)$ is $\rightarrow$, and
$\mathtt{r}(\doteq)$ is $\doteq$.
\end{itemize}
For lack of a better name, we will call \textit{trails} the equivalence classes of $\approx$.

Given a ribbon pairing $\rho\in\mathcal{R}_{\nu}$, we will associate to $\rho$ a set $\mathcal{T}_{\nu}(\rho)$ made of
$f_{\nu}(\rho)$ trails, one per each boundary cycle in the ribbon pairing.
One starts on such a boundary cycle in an arbitrary place, 
and travels along it in any of the two directions. 
Then one successively visits ribbon half-edges with labels 
$k_{1},k_{2},\dots,k_{j}$ and then returns to the half-edge $k_{1}$.
One can go from the half-edge $k_{i}$ to the half-edge
$k_{i+1}$ either by following a gluing, and we will denote this
$k_{i}\doteq k_{i+1}$, or by going through a vertex. In the latter case, one either does a turn clockwise, and we will denote this
$k_{i}\rightarrow k_{i+1}$, or counterclockwise, and we will denote this $k_{i}\leftarrow k_{i+1}$. A special rule is applied if the vertex has only one outgoing half-edge, one just makes the arrows in the trail and on the picture match. This is how a trail is obtained. Note that by construction, there is an alternation between on one hand
$\doteq$, and on the other hand 
$\rightarrow$ or $\leftarrow$. In the example of 
Figure \ref{FigRibbonGraphstr}, there are three trails:
\begin{equation}
\label{EqTrail_str}
(1,\rightarrow,2,\doteq,4,\rightarrow,1,\doteq,3,\rightarrow,4,
\doteq,2,\rightarrow,3,\doteq),
~~ 
(5,\rightarrow,6,\doteq,7,\rightarrow,5,\doteq,8,\rightarrow,8,\doteq),
~~ (6,\rightarrow,7,\doteq).
\end{equation}
In the example of 
Figure \ref{FigRibbonGraph}, there are two trails:
\begin{displaymath}
(1,\rightarrow,2,\doteq,4,\rightarrow,1,\doteq,3,\leftarrow,2,
\doteq,4,\leftarrow,3,\doteq),
~~ 
(5,\rightarrow,6,\doteq,7,\leftarrow,6,\doteq,7,\rightarrow,5,\doteq,8,\rightarrow,8,\doteq).
\end{displaymath}

We will also consider \textit{oriented trails}. Like the (unoriented) trails, they can contain positive integer numbers and symbols $\rightarrow$ and $\doteq$, but not the symbol
$\leftarrow$. In the oriented trails we quotient by cyclic permutations, but not by the reversal of direction. We will associate oriented trails to ribbon pairing that contain only straight edges. Given $\rho\in\mathcal{R}_{\nu}$ with only straight edges, $\overrightarrow{\mathcal{T}}_{\nu}(\rho)$ will be a set of
$f_{\nu}(\rho)$ oriented trails, 
one per each boundary cycle where one follows the cycle in the direction of the arrows (clockwise).
The oriented trails corresponding to Figure \ref{FigRibbonGraphstr} are given by \eqref{EqTrail_str}.

\subsection{One matrix integrals and topological expansion}
\label{SubSecOneMatrix}
$E_{\beta,n}$ will denote $\mathcal{S}_{n}(\R)$, the space of real symmetric matrices, for 
$\beta=1$, $\mathcal{H}_{n}(\mathbb{C})$, 
the space of complex Hermitian matrices, for
$\beta=2$, and $\mathcal{H}_{n}(\mathbb{H})$,
the space of quaternionic Hermitian matrices,
 for $\beta=4$.
 \begin{displaymath}
\dim E_{\beta=1,n} = \dfrac{(n+1)n}{2},
\qquad
\dim E_{\beta=2,n} = n^{2},
\qquad
\dim E_{\beta=4,n} = 2n^{2} -n.
\end{displaymath}
$E_{\beta,n}$ is endowed with the real inner product
\begin{displaymath}
(M,M')\mapsto \Re(\Tr(M M')).
\end{displaymath}

The Gaussian Orthogonal Ensemble GOE$(n)$,
the Gaussian Unitary Ensemble GUE$(n)$ and 
Gaussian Symplectic Ensemble GSE$(n)$
are Gaussian probability measures on
$E_{\beta,n}$, with $\beta=1$ for the GOE$(n)$,
$\beta=2$ for the GUE$(n)$ and $\beta=4$ for the GSE$(n)$. We will use the usual notation G$\beta$E$(n)$. The density with respect to the Lebesgue measure on $E_{\beta,n}$ is given by
\begin{equation}
\label{EqGbetaEmat}
\dfrac{1}{Z_{\beta,n}}e^{-\frac{1}{2}\Tr(M^{2})}.
\end{equation}
The distribution of the ordered family of eigenvalues
$\lambda_{1}\geq \lambda_{2}\geq\dots \geq \lambda_{n}$ of 
G$\beta$E$(n)$ is given by
\begin{displaymath}
\dfrac{1}{Z_{\beta,n}^{\rm ev}}
\1_{\{\lambda_{1}\geq \lambda_{2}\geq\dots \geq \lambda_{n}\}}
\prod_{1\leq i<j\leq n}(\lambda_{i}-\lambda_{j})^{\beta}
e^{-\frac{1}{2}(\lambda_{1}^{2}+\dots + \lambda_{n}^{2})}
d\lambda_{1}\dots d\lambda_{n}.
\end{displaymath}
For more on random matrices see \cite{Mehta04RandomMatrices}.

Let $\nu=(\nu_{1},\nu_{2},\dots,\nu_{m(\nu)})$, 
where for all
$l\in\{1,2,\dots,m(\nu)\}$,
$\nu_{l}\in\mathbb{N}\setminus\{0\}$, and
\begin{displaymath}
\vert \nu\vert= \sum_{l=1}^{m(\nu)}\nu_{l}
\end{displaymath}
is even. Next we recall the expressions for the matrix integrals
\begin{equation}
\label{EqMom}
\langle
\prod_{l=1}^{m(\nu)}
\Tr(M^{\nu_{l}})
\rangle_{\beta,n}=
\dfrac{1}{Z_{\beta,n}}
\int_{E_{\beta,n}}
\Big(\prod_{l=1}^{m(\nu)}
\Tr(M^{\nu_{l}})\Big)
e^{-\frac{1}{2}\Tr(M^{2})} dM,
\qquad \beta\in\{1,2,4\}.
\end{equation}
Note that if $\vert \nu\vert$ is odd, the above integrals are
zero.
The expression for \eqref{EqMom} is a polynomial in $n$, 
with powers $n^{f_{\nu}(\rho)}$, where 
$\rho\in\mathcal{R}_{\nu}$ are ribbon pairings associated to 
$\nu$. Since $f_{\nu}(\rho)$ can be expressed using Euler's characteristic of surfaces, the expression for \eqref{EqMom} is often referred to as \textit{topological expansion}.
The expansion for complex Hermitian and real symmetric matrices
appears in \cite{BIPZ78PlanarDiagrams}
(see also \cite{BIZ80GraphicalEnumeration}).
The combinatorics for quaternionic Hermitian matrices are given in
\cite{MulaseWaldron03GSE} (see also \cite{BrycPierce09GSE}). 
For more on the topological expansion, we  also refer to
\cite[Chapter~2]{Eynard16CountingSurfaces},
\cite[Chapter~2]{EynardKimuraRibault18RandomMatrices},
\cite[Chapter~3]{LandoZvonkine04GraphsSurfaces},
and
\cite{Zvonkin97MatrixIntegrals}. 

Given a ribbon pairing $\rho\in\mathcal{R}_{\nu}$, we  associate to it a weight $w_{\nu,\beta}(\rho)$ 
depending on $\beta$:
\begin{displaymath}
w_{\nu,\beta=1}(\rho)=\dfrac{1}{2^{\vert\nu\vert/2}},
~~
w_{\nu,\beta=2}(\rho)=
1_{\rho \text{ has only straight edges}},
~~
w_{\nu,\beta=4}(\rho)=(-2)^{\chi_{\nu}(\rho)}
2^{-2 m(\nu)+\vert\nu\vert/2}.
\end{displaymath}
In all three cases $\beta\in\{1,2,4\}$,
for every $\textbf{p}$ partition in pairs of
$\{1,\dots,\vert\nu\vert\}$,
\begin{displaymath}
\sum_{\rho\in\mathcal{R}_{\nu,\textbf{p}}}w_{\nu,\beta}(\rho)=
1.
\end{displaymath}

\begin{thm}[Brézin-Itzykson-Parisi-Zuber \cite{BIPZ78PlanarDiagrams},
Mulase-Waldron \cite{MulaseWaldron03GSE}]
\label{ThmTopoExp1Matrix}
For $\beta\in\{1,2,4\}$ and $\vert\nu\vert$ even, the value of the matrix integral
$\langle
\prod_{l=1}^{m(\nu)}
\Tr(M^{\nu_{l}})
\rangle_{\beta,n}$ \eqref{EqMom} is given by
\begin{equation}
\label{EqTopoExpansion}
\langle
\prod_{l=1}^{m(\nu)}
\Tr(M^{\nu_{l}})
\rangle_{\beta,n}=
\sum_{\rho\in\mathcal{R}_{\nu}}w_{\nu,\beta}(\rho)
n^{f_{\nu}(\rho)}.
\end{equation}
\end{thm}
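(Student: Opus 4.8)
The plan is to compute the Gaussian integral \eqref{EqMom} by Wick's theorem (Isserlis' formula), keeping careful track of the matrix index combinatorics. First I would fix an orthonormal basis $(E_a)_{a}$ of $E_{\beta,n}$ for the inner product $(M,M')\mapsto \Re\Tr(MM')$, and write $M=\sum_a g_a E_a$ with $(g_a)$ i.i.d.\ standard Gaussians under the density \eqref{EqGbetaEmat}. Expanding each factor $\Tr(M^{\nu_l})$ and applying Wick's theorem reduces the integral to a sum over pairings of the $\vert\nu\vert$ matrix factors; each pairing contributes a product of covariances $\E[M_{i_1 j_1}M_{i_2 j_2}]$ (or the appropriate quaternionic/real analogue), contracted cyclically around each vertex. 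The core observation is that a pairing of the $\vert\nu\vert$ factors is exactly a pairing of the $\vert\nu\vert$ ribbon half-edges, and the gluing rule for the two ribbon half-edges encodes whether the index identification in the propagator is \emph{straight} (identify $i_1\leftrightarrow j_2$, $j_1\leftrightarrow i_2$) or \emph{twisted} (identify $i_1\leftrightarrow i_2$, $j_1\leftrightarrow j_2$), corresponding to the two terms in the GOE/GSE propagator and to the single term available in the GUE case.

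Next I would carry out the index sum. Once a ribbon pairing $\rho\in\mathcal{R}_\nu$ is chosen, all free matrix indices are forced to be equal along each border cycle of the associated ribbon graph, so the number of independent summation indices is precisely $f_\nu(\rho)$, giving the factor $n^{f_\nu(\rho)}$. The scalar prefactor coming from the normalization of the propagator (the factor $1/2$ in the off-diagonal real/quaternionic variance, signs and quaternionic $2\times 2$ block traces via $\Re\Tr = \tfrac12\Tr\circ\mathtt C$) collects into the weight $w_{\nu,\beta}(\rho)$. For $\beta=2$ only straight gluings occur (the complex Hermitian propagator $\E[M_{ij}M_{kl}]=\delta_{il}\delta_{jk}$ has a single term), which is why $w_{\nu,2}(\rho)=\1_{\rho\text{ straight}}$; for $\beta=1$ each of the $\vert\nu\vert/2$ edges splits into two equal-weight gluings, explaining $w_{\nu,1}(\rho)=2^{-\vert\nu\vert/2}$; for $\beta=4$ the $2\times2$ complex representation introduces the $(-2)^{\chi_\nu(\rho)}2^{-2m(\nu)+\vert\nu\vert/2}$ factor, which I would verify by tracking how each vertex trace and each edge contributes powers of $2$ and signs in $\mathtt C(M)$, then repackaging $f_\nu(\rho)-\vert\nu\vert/2 = \chi_\nu(\rho)-m(\nu)$ via Euler's formula. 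A consistency check is the stated identity $\sum_{\rho\in\mathcal{R}_{\nu,\mathbf p}}w_{\nu,\beta}(\rho)=1$, which guarantees the total is independent of how one resolves a pairing into gluings.

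I would organize the write-up so that the $\beta=2$ case is done first and in full detail, since there the propagator is cleanest and the ribbon structure is most transparent; then treat $\beta=1$ by the observation that the real symmetric propagator is the symmetrized version of the complex one (sum of straight and twisted terms), so each fixed pairing $\mathbf p$ reproduces the $\beta=2$ straight contribution plus new twisted contributions, with an overall $2^{-\vert\nu\vert/2}$; finally handle $\beta=4$ through $\mathtt C$, reducing quaternionic Wick contractions to complex ones on $2n\times2n$ matrices and bookkeeping the halving from $\Re\Tr$. The main obstacle I anticipate is precisely the $\beta=4$ sign and power-of-$2$ bookkeeping: one must show that the naive $2^{2n}$-type factors from the doubled representation, the $2^{-\#\text{vertices}}$ from $\Re\Tr=\tfrac12\Tr\mathtt C$, and the signs coming from the antisymmetric structure of the symplectic propagator collapse exactly into $(-2)^{\chi_\nu(\rho)}2^{-2m(\nu)+\vert\nu\vert/2}n^{f_\nu(\rho)}$ — in particular that the sign is governed only by the Euler characteristic (equivalently, the parity of the number of twisted edges combined with $f_\nu(\rho)$). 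I would either cite the computation in \cite{MulaseWaldron03GSE} for this identification or reprove it by induction on the number of edges, checking the base case of a single two-valent vertex with a straight and with a twisted edge.
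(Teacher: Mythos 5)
Your plan is sound, and it is worth noting that the paper does not actually prove Theorem \ref{ThmTopoExp1Matrix}: it is quoted as a known result of Br\'ezin--Itzykson--Parisi--Zuber and Mulase--Waldron. The closest the paper comes to a proof is Lemma \ref{LemTransfo} (which recovers the theorem on setting all $A(k,k')=I_n$, so that every $\Tr(\Pi_{\mathtt t}(A))=n$), and there the author deliberately takes a route different from yours: an induction on the number of ribbon edges $\vert\nu\vert/2$, contracting one edge at a time via the single-edge moment formulas of Lemma \ref{LemRecursion} and following the recurrence of Bryc--Pierce. The stated reason is exactly the obstacle you flag at the end of your proposal --- the direct Wick expansion with index bookkeeping ``works well for $\beta\in\{1,2\}$\ldots but for $\beta=4$ this approach is less tractable because of the non-commutativity of quaternions.'' Your $\beta\in\{1,2\}$ argument (pairings $=$ ribbon pairings, straight/twisted gluings $=$ the two terms of the real propagator versus the single term of the complex one, free indices constant along border cycles giving $n^{f_\nu(\rho)}$) is the standard and complete classical proof. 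For $\beta=4$, your complexification route through $\mathtt C$ is essentially the Mulase--Waldron strategy and is consistent: since $\chi_\nu(\rho)=m(\nu)-\vert\nu\vert/2+f_\nu(\rho)$, the claimed weight times $n^{f_\nu(\rho)}$ equals $(-1)^{\chi_\nu(\rho)}2^{f_\nu(\rho)-m(\nu)}n^{f_\nu(\rho)}$, which matches $(2n)^{f_\nu(\rho)}$ from the doubled index range times $2^{-m(\nu)}$ from $\Re\Tr=\tfrac12\Tr\circ\mathtt C$, leaving only the sign $(-1)^{\chi_\nu(\rho)}$ to be extracted from the symplectic structure of the propagator --- the one genuinely delicate point, which you correctly isolate and propose to settle either by citation or by the same edge-contraction induction the paper uses. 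In short: your approach buys a transparent, self-contained combinatorial proof for $\beta\in\{1,2\}$ at the cost of a harder sign computation for $\beta=4$; the paper's inductive approach trades that transparency for a uniform treatment of all three ensembles that also scales to the matrix-weighted and connection-twisted generalizations it actually needs.
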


For instance,
$\langle\Tr(M^{2})\rangle_{\beta,n}$ equals 
$\dim E_{\beta,n}$.
For $\nu =(4)$ and
$\nu=(2,2)$ one gets
\begin{align*}
\langle\Tr(M^{4})\rangle_{\beta=1,n}&=\dfrac{1}{2}n^{3} 
+ \dfrac{5}{4} n^{2} + \dfrac{5}{4} n,
\qquad 
&\langle(\Tr(M^{2}))^{2}\rangle_{\beta=1,n}&=
\dfrac{1}{4}n^{4} + \dfrac{1}{2} n^{3} + \dfrac{5}{4} n^{2} + n,
\\\langle\Tr(M^{4})\rangle_{\beta=2,n}&= 2n^{3} 
+ n,
\qquad 
&\langle(\Tr(M^{2}))^{2}\rangle_{\beta=2,n}&=
n^{4} + 2 n^{2},
\\\langle\Tr(M^{4})\rangle_{\beta=4,n}&= 8n^{3} 
-10n^{2}+5n,
\qquad 
&\langle(\Tr(M^{2}))^{2}\rangle_{\beta=4,n}&=
4n^{4}-4n^{3}+5n^{2}-2n.
\end{align*}

\section{Main statements}
\label{SecStat}

\subsection{Matrix-valued free fields, isomorphisms and topological expansion}
\label{SubSecMatrixGFF}

Let $\mathcal{G}=(V,E)$ be an electrical network as in Section
\ref{ssecDynkin}.
For $\beta\in \{1,2,4\}$, $\Phi$ will be a random Gaussian function  from $V$ to $E_{\beta,n}$. The distribution of 
$\Phi$ is
\begin{equation}
\label{EqDensityMatrixGFF}
\dfrac{1}{Z_{\beta,n}^{\mathcal{G}}}\exp\Big(
-\dfrac{1}{2}\sum_{x\in V}\kappa(x)\Tr(M(x)^{2})
-\dfrac{1}{2}\sum_{\{x,y\}\in E}
C(x,y)\Tr((M(y)-M(x))^{2})
\Big)
\prod_{x\in V} dM(x).
\end{equation}
The field $\Phi$ is a matrix-valued Gaussian free field.
It can be obtained out of $\dim E_{\beta,n}$ i.i.d. copies of the scalar GFF \eqref{EqDensityGFF}, 
by considering the coefficients of the matrices.
For any $x\in V$,
$\Phi(x)/\sqrt{G(x,x)}$ is distributed as a 
G$\beta$E$(n)$ matrix.
The brackets $\langle\cdot\rangle_{\beta,n}$ will denote the expectation with respect to the law of $\Phi$ for the corresponding values of 
$(\beta,n)$.

Let $\nu=(\nu_{1},\nu_{2},\dots,\nu_{m(\nu)})$, 
where for all
$l\in\{1,2,\dots,m(\nu)\}$,
$\nu_{l}\in\mathbb{N}\setminus\{0\}$, and $\vert\nu\vert$ is even.
Let $x_{1}, x_{2},\dots , x_{\vert\nu\vert}$ be vertices in 
$V$, not necessarily distinct and $F$ a bounded measurable function
$\R^{V}\rightarrow\R$. By applying Theorem \ref{ThmIsoDynkin},
one can \textit{a priori} write an isomorphism for
\begin{displaymath}
\Big\langle \Big(\prod_{l=1}^{m(\nu)}
\Tr\Big(
\prod_{k=\nu_{1}+\dots+\nu_{l-1}+1}
^{\nu_{1}+\dots+\nu_{l}}
\Phi(x_{k})\Big)\Big)
F(\Tr(\Phi^{2})/2)\Big\rangle_{\beta,n}.
\end{displaymath}
However, if one expands the traces and the product, one gets many terms that give identical contributions, many terms with contributions that compensate, and many terms that do not contribute at all. 
Here we will be interested in the exact combinatorics that appear.
What emerges is a topological expansion, generalizing that of
Theorem \ref{ThmTopoExp1Matrix}.

Let  
$\mu_{\nu,\beta,n}^{x_{1}, x_{2},\dots , x_{\vert\nu\vert}}$ be
the following positive measure on families of $\vert\nu\vert/2$ 
nearest-neighbor paths on $\mathcal{G}$:
\begin{displaymath}
\mu_{\nu,\beta,n}^{x_{1}, x_{2},\dots , x_{\vert\nu\vert}}
(d\gamma_{1},\dots,d\gamma_{\vert\nu\vert/2})=
\sum_{\rho\in\mathcal{R}_{\nu}}
w_{\nu,\beta}(\rho) n^{f_{\nu}(\rho)}
\mu_{\textbf{p}_{\nu}(\rho)}
^{x_{1}, x_{2},\dots , x_{\vert\nu\vert}}
(d\gamma_{1},\dots,d\gamma_{\vert\nu\vert/2}).
\end{displaymath}
Next we give examples.
\begin{align*}
\mu_{\nu=(2),\beta=1,n}^{x_{1}, x_{2}} &=
\Big(\dfrac{1}{2}n^{2}+\dfrac{1}{2}n\Big)
\mu^{x_{1}, x_{2}},
\qquad &
\mu_{\nu=(1,1),\beta=1,n}^{x_{1}, x_{2}} =
n\mu^{x_{1}, x_{2}},
\\
\mu_{\nu=(2),\beta=2,n}^{x_{1}, x_{2}} &=
n^{2}\mu^{x_{1}, x_{2}},
\qquad &
\mu_{\nu=(1,1),\beta=2,n}^{x_{1}, x_{2}} =
n\mu^{x_{1}, x_{2}},
\\
\mu_{\nu=(2),\beta=4,n}^{x_{1}, x_{2}} &=
(2n^{2}-n)\mu^{x_{1}, x_{2}},
\qquad &
\mu_{\nu=(1,1),\beta=4,n}^{x_{1}, x_{2}} =
n\mu^{x_{1}, x_{2}}.
\end{align*}
The measures
$\mu_{\nu,\beta,n}^{x_{1},x_{2},x_{3},x_{4}}$
with $\vert\nu\vert = 4$ are linear combinations of
$\mu^{x_{1}, x_{2}}\otimes\mu^{x_{3}, x_{4}}$,
$\mu^{x_{1}, x_{4}}\otimes\mu^{x_{2}, x_{3}}$
and
$\mu^{x_{1}, x_{3}}\otimes\mu^{x_{2}, x_{4}}$.
We summarize the coefficients in some cases in the table below.
\begin{center}
\begin{tabular}{|c|c|c|c|}
\hline 
 & $\mu^{x_{1}, x_{2}}\otimes\mu^{x_{3}, x_{4}}$ 
 & $\mu^{x_{1}, x_{4}}\otimes\mu^{x_{2}, x_{3}}$ 
 & $\mu^{x_{1}, x_{3}}\otimes\mu^{x_{2}, x_{4}}$ \\ 
\hline 
$\mu_{\nu=(4),\beta=1,n}^{x_{1},x_{2},x_{3},x_{4}}$
& $\dfrac{1}{4}n^{3}+\dfrac{1}{2}n^{2}+\dfrac{1}{4}n$ 
& $\dfrac{1}{4}n^{3}+\dfrac{1}{2}n^{2}+\dfrac{1}{4}n$ 
& $\dfrac{1}{4}n^{2}+\dfrac{3}{4}n$ \\ 
\hline 
$\mu_{\nu=(2,2),\beta=1,n}^{x_{1},x_{2},x_{3},x_{4}}$ 
& $\dfrac{1}{4}n^{4}+\dfrac{1}{2}n^{3}+\dfrac{1}{4}n^{2}$ 
& $\dfrac{1}{2}n^{2}+\dfrac{1}{2}n$ 
& $\dfrac{1}{2}n^{2}+\dfrac{1}{2}n$ \\ 
\hline 
$\mu_{\nu=(4),\beta=2,n}^{x_{1},x_{2},x_{3},x_{4}}$ 
& $n^{3}$ & $n^{3}$ & $n$ \\ 
\hline 
$\mu_{\nu=(2,2),\beta=2,n}^{x_{1},x_{2},x_{3},x_{4}}$ 
& $n^{4}$ & $n^{2}$ & $n^{2}$ \\ 
\hline 
$\mu_{\nu=(4),\beta=4,n}^{x_{1},x_{2},x_{3},x_{4}}$ 
& $4n^{3}-4n^{2}+n$ 
& $4n^{3}-4n^{2}+n$ 
& $-2n^{2}+3n$ \\ 
\hline 
$\mu_{\nu=(2,2),\beta=4,n}^{x_{1},x_{2},x_{3},x_{4}}$ 
& $4n^{4}-4n^{3}+n^{2}$ 
& $2n^{2}-n$ & $2n^{2}-n$ \\ 
\hline 
\end{tabular}
\end{center}
In the examples of Section \ref{SubSecRibbon}, the terms in
$\mu_{\nu = (4,3,1),\beta,n}^{x_{1}, x_{2},\dots , x_{8}}$
corresponding to the pairings displayed on 
Figures \ref{FigRibbonGraphstr}, respectively \ref{FigRibbonGraph}, are measures of the form $\mu^{x_{1},x_{3}}\otimes\mu^{x_{2},x_{4}}
\otimes\mu^{x_{5},x_{8}}\otimes\mu^{x_{6},x_{7}}$ with prefactors $w_{\nu = (4,3,1),\beta}(\rho)n^{3}$, respectively
$w_{\nu = (4,3,1),\beta}(\rho)n^{2}$.

\begin{thm}
\label{ThmTopoExpField}
For $\beta\in\{1,2\}$
and $F$ a bounded measurable function 
$\mathbb{R}^{V} \rightarrow \mathbb{R}$, 
one has the following equality:
\begin{multline*}
\Big\langle \Big(\prod_{l=1}^{m(\nu)}
\Tr\Big(
\prod_{k=\nu_{1}+\dots+\nu_{l-1}+1}
^{\nu_{1}+\dots+\nu_{l}}
\Phi(x_{k})\Big)\Big)
F(\Tr(\Phi^{2})/2)\Big\rangle_{\beta,n} 
\\= 
\int\displaylimits_{\gamma_{1},\dots \gamma_{\vert\nu\vert /2}}
\Big\langle
F\big(\Tr(\Phi^{2})/2+L(\gamma_{1})+\dots + L(\gamma_{\vert\nu\vert /2})\big)
\Big\rangle_{\beta,n}
\mu_{\nu,\beta,n}^{x_{1}, x_{2},\dots , x_{\vert\nu\vert}}
(d\gamma_{1},\dots ,d\gamma_{\vert\nu\vert /2})
,
\end{multline*}
where 
$\langle\cdot\rangle_{\beta,n}
\mu_{\nu,\beta,n}^{x_{1}, x_{2},\dots ,x_{\vert\nu\vert}}
(\cdot)$ is a product measure.
For $\beta=4$, 
\begin{multline*}
\Big\langle \Big(\prod_{l=1}^{m(\nu)}
\Re\Big(\Tr\Big(
\prod_{k=\nu_{1}+\dots+\nu_{l-1}+1}
^{\nu_{1}+\dots+\nu_{l}}
\Phi(x_{k})\Big)\Big)\Big)
F(\Tr(\Phi^{2})/2)\Big\rangle_{\beta=4,n} 
\\= 
\int\displaylimits_{\gamma_{1},\dots \gamma_{\vert\nu\vert /2}}
\Big\langle
F\big(\Tr(\Phi^{2})/2+L(\gamma_{1})+\dots + L(\gamma_{\vert\nu\vert /2})\big)
\Big\rangle_{\beta=4,n}
\mu_{\nu,\beta=4,n}^{x_{1}, x_{2},\dots , x_{\vert\nu\vert}}
(d\gamma_{1},\dots ,d\gamma_{\vert\nu\vert /2})
.
\end{multline*}

In particular, if
$\lambda_{1}(x)\geq\lambda_{2}(x)\geq\dots\geq\lambda_{n}(x)$ is the family of eigenvalues of $\Phi(x)$, $x\in V$, and
\begin{displaymath}
x_{1}=\dots = x_{\nu_{1}},
\qquad
x_{\nu_{1}+1} =
\dots = x_{\nu_{1}+\nu_{2}},
\qquad
\dots ,
\qquad
x_{\vert\nu\vert-\nu_{m(\nu)}+1} = \dots
= x_{\vert\nu\vert},
\end{displaymath}
then, for $\beta\in\{1,2,4\}$,
\begin{multline}
\label{EqIsoEV}
\Big\langle \prod_{l=1}^{m(\nu)}
\Big(\sum_{i=1}^{n}
\lambda_{i}(x_{\nu_{1}+\dots + \nu_{l}})^{\nu_{l}}
\Big)
F\Big(\dfrac{1}{2}\sum_{i=1}^{n}
\lambda_{i}^{2}\Big)\Big\rangle_{\beta,n} 
\\= 
\int\displaylimits_{\gamma_{1},\dots \gamma_{\vert\nu\vert /2}}
\Big\langle
F\Big(
\dfrac{1}{2}\sum_{i=1}^{n}\lambda_{i}^{2}
+L(\gamma_{1})+\dots + L(\gamma_{\vert\nu\vert /2})\Big)
\Big\rangle_{\beta,n}
\mu_{\nu,\beta,n}
^{(x_{\nu_{1}},\nu_{1}), \dots , 
(x_{\vert\nu\vert},\nu_{m(\nu)})}
(d\gamma_{1},\dots ,d\gamma_{\vert\nu\vert /2})
,
\end{multline}
where the notation $(x_{\nu_{1}+\dots + \nu_{l}},\nu_{l})$
means that $x_{\nu_{1}+\dots + \nu_{l}}$ is repeated
$\nu_{l}$ times.
\end{thm}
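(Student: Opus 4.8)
The plan is to reduce the statement to the vector-valued BFS--Dynkin isomorphism together with the index bookkeeping that already underlies the one-matrix topological expansion of Theorem~\ref{ThmTopoExp1Matrix}. First I would realize $\Phi$ through i.i.d.\ scalar GFFs: fix a real orthonormal basis $(\Theta_\alpha)_{\alpha=1}^{\dim E_{\beta,n}}$ of $E_{\beta,n}$ for the inner product $(M,M')\mapsto\Re\Tr(MM')$ and write $\Phi(x)=\sum_\alpha\psi_\alpha(x)\Theta_\alpha$ with $(\psi_\alpha)_\alpha$ i.i.d.\ copies of the scalar GFF \eqref{EqDensityGFF}; this is the field of \eqref{EqDensityMatrixGFF}. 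Then $\Tr(\Phi^2)=\sum_\alpha\psi_\alpha^2=\Vert\vec\psi\Vert^2$ for every $\beta\in\{1,2,4\}$ (for $\beta=4$ one works with the $2n\times 2n$ complex Hermitian matrix $\mathtt C(\Phi(x))$ and $\Re\Tr(\cdot)=\tfrac12\Tr(\mathtt C(\cdot))$), so $\langle\cdot\rangle_{\beta,n}$ of any functional of $\Tr(\Phi^2)$ is the expectation over the vector GFF $\vec\psi$. Moreover each entry is the real-linear combination $\Phi_{pq}(x)=\sum_\alpha(\Theta_\alpha)_{pq}\psi_\alpha(x)$, so the entrywise covariance factorizes as $\langle\Phi_{pq}(x)\Phi_{rs}(y)\rangle_{\beta,n}=c^{(\beta)}_{pq,rs}\,G(x,y)$ with $c^{(\beta)}=\sum_\alpha(\Theta_\alpha)_{pq}(\Theta_\alpha)_{rs}$ the completeness/contraction tensor of $\mathrm G\beta\mathrm E(n)$ (whence $\Phi(x)/\sqrt{G(x,x)}$ is $\mathrm G\beta\mathrm E(n)$, as stated): $c^{(2)}_{pq,rs}=\delta_{ps}\delta_{qr}$, $c^{(1)}_{pq,rs}=\tfrac12(\delta_{pr}\delta_{qs}+\delta_{ps}\delta_{qr})$, and the analogous symplectic tensor for $\beta=4$.

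Second, I would expand and apply BFS--Dynkin. Writing each trace as $\Tr\big(\prod_k\Phi(x_k)\big)=\sum\prod_k\Phi_{i_ki_{k+1}}(x_k)$ over cyclic index sequences around each vertex (with the $\mathtt C$-reformulation for $\beta=4$) and then expanding every entry in the $\psi_\alpha$-basis, the left-hand side becomes a finite sum of terms $\big\langle\prod_{k=1}^{\vert\nu\vert}\psi_{\alpha_k}(x_k)\,F(\Vert\vec\psi\Vert^2/2)\big\rangle$ weighted by $\prod_k(\Theta_{\alpha_k})_{i_ki_{k+1}}$. To each such term apply the vector BFS--Dynkin isomorphism, Theorem~\ref{ThmIsoKL} (equivalently Lemma~\ref{LemIsoKLrew}), with fiber dimension $\dim E_{\beta,n}$ and the trivial connection $U\equiv I$, for which $\mathsf{hol}^U\equiv I$ and $\E\big[\prod_i\widehat X^{(i)}_{\alpha_{a_i}}\widehat X^{(i)}_{\alpha_{b_i}}\big]=\prod_i\delta_{\alpha_{a_i}\alpha_{b_i}}$. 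Summing back over the $\alpha$'s, using $\sum_\alpha(\Theta_\alpha)_{pq}(\Theta_\alpha)_{rs}=c^{(\beta)}_{pq,rs}$, gives for each partition $\textbf p=\{\{a_i,b_i\}\}$ in pairs of $\{1,\dots,\vert\nu\vert\}$ a scalar prefactor $\sum_{(i_k)}\prod_i c^{(\beta)}_{J(a_i),J(b_i)}$ (sum over index sequences compatible with the traces), multiplied by the same path integral $\int\big\langle F(\Tr(\Phi^2)/2+\sum_i L(\gamma_i))\big\rangle_{\beta,n}\,\mu^{x_{a_1},x_{b_1}}\!\!\otimes\cdots\otimes\mu^{x_{a_{\vert\nu\vert/2}},x_{b_{\vert\nu\vert/2}}}$; this path part depends only on $\textbf p$ and the $x_k$ and factors out of all index sums. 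All sums and integrals are finite, since $\mu^{x,y}$ has mass $G(x,y)<\infty$ and $F$ is bounded.

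Third, I would recognize the topological expansion and conclude. For a fixed $\textbf p$ the prefactor $\sum_{(i_k)}\prod_i c^{(\beta)}_{J(a_i),J(b_i)}$ is precisely the contribution of $\textbf p$ to the $\mathrm G\beta\mathrm E$ Wick expansion of $\langle\prod_l\Tr(M^{\nu_l})\rangle_{\beta,n}$; by the pairing-by-pairing derivation of Theorem~\ref{ThmTopoExp1Matrix} it equals $\sum_{\rho\in\mathcal R_{\nu,\textbf p}}w_{\nu,\beta}(\rho)\,n^{f_\nu(\rho)}$, the power $n^{f_\nu(\rho)}$ counting the free index loops, i.e.\ the border cycles of the ribbon pairings $\rho$ refining $\textbf p$, and the straight/twisted gluings (with the quaternionic signs) being repackaged into $w_{\nu,\beta}$. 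Summing over $\textbf p$ and comparing with the definition of $\mu^{x_1,\dots,x_{\vert\nu\vert}}_{\nu,\beta,n}$ yields the two displayed identities. Finally \eqref{EqIsoEV} follows by setting the $x_k$ equal within each trace block, so that $\Tr(\Phi(x)^{\nu_l})=\sum_i\lambda_i(x)^{\nu_l}$ (taking $\Re\Tr$ when $\beta=4$, which equals $\sum_i\lambda_i(x)^{\nu_l}$ after diagonalization) and $\Tr(\Phi^2)=\sum_i\lambda_i^2$.

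The main obstacle is the third step for $\beta=4$: checking that summing the contraction tensor $c^{(4)}$ over all index sequences compatible with a fixed $\textbf p$ reproduces exactly $\sum_{\rho\in\mathcal R_{\nu,\textbf p}}w_{\nu,4}(\rho)\,n^{f_\nu(\rho)}$ with $w_{\nu,4}(\rho)=(-2)^{\chi_\nu(\rho)}2^{-2m(\nu)+\vert\nu\vert/2}$. This amounts to rerunning the Mulase--Waldron analysis of the GSE in the path-decorated setting, where the non-commutativity of $\mathbb H$, the passage through $\mathtt C(\cdot)$ and $2n\times 2n$ matrices, and the signs $(-2)^{\chi_\nu(\rho)}$ make the bookkeeping delicate, and one must verify that nothing beyond the scalar weight $w_{\nu,4}(\rho)$ and the factor $n^{f_\nu(\rho)}$ survives. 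The cases $\beta=1,2$ are comparatively transparent, $\beta=2$ reducing to the classical oriented ribbon-graph count with only straight edges.
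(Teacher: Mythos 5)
Your proposal is correct in outline and its first two steps coincide with the paper's: the paper's Lemma \ref{LemMi} is exactly your reduction (expand the traces entrywise, apply the vector BFS--Dynkin isomorphism in the form of Lemma \ref{LemIsoKLrew} with trivial connection, and observe that the path integral against $\mu^{x_{a_i},x_{b_i}}$ and the $F$-term depend only on the pairing $\textbf{p}$, so the index combinatorics factor out as a scalar coefficient per pairing). Where you diverge is in evaluating that coefficient. You propose direct index bookkeeping via the contraction tensors $c^{(\beta)}$ and then an appeal to the pairing-by-pairing refinement of Theorem \ref{ThmTopoExp1Matrix}; the paper instead proves the needed identity (Lemma \ref{LemTransfo}, in a more general form with interleaved deterministic matrices $A(k,k')$, which it needs anyway for Theorem \ref{ThmTopoExpA}) by induction on the number of ribbon edges, taking a conditional expectation with respect to one G$\beta$E matrix at a time and contracting the corresponding ribbon edge, following Bryc--Pierce. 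The author explicitly states that the direct expand-and-recombine route "works well for $\beta\in\{1,2\}$ \dots but for $\beta=4$ this approach is less tractable because of the non-commutativity of quaternions" --- which is precisely the obstacle you flag at the end. So for $\beta\in\{1,2\}$ your argument is complete modulo standard Wick calculus; for $\beta=4$ you have correctly identified, but not discharged, the hard combinatorial verification (that summing the symplectic contraction tensor over index sequences compatible with a fixed $\textbf{p}$ yields $\sum_{\rho\in\mathcal{R}_{\nu,\textbf{p}}}w_{\nu,4}(\rho)n^{f_{\nu}(\rho)}$), and the recursive edge-contraction argument is the paper's way of avoiding exactly that computation. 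If you want a self-contained proof along your lines, you would need to either carry out the Mulase--Waldron analysis through the $\mathtt{C}(\cdot)$ representation, or adopt the quaternionic Wick formula of Lemma \ref{LemQuaternionWick} and the induction of Lemma \ref{LemTransfo}.
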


\begin{rem}
\label{Rem why Re}
In the quaternionic case $\beta=4$,
when considering a single GSE matrix $M$
as in Theorem \ref{ThmTopoExp1Matrix},
one does not need to take the real part 
of $\Tr(M^{\nu_{l}})$,
since $M^{\nu_{l}}$ is quaternionic Hermitian and its diagonal entries are real.
However, in Theorem \ref{ThmTopoExpField} above,
the products
\begin{displaymath}
\prod_{k=\nu_{1}+\dots+\nu_{l-1}+1}
^{\nu_{1}+\dots+\nu_{l}}
\Phi(x_{k})
\end{displaymath}
may involve several different matrices,
and thus are not always quaternionic Hermitian.
So one needs to take the real part of the traces.
\end{rem}

\medskip

Now, we consider a family of 
$\vert\nu\vert$ deterministic square matrices with complex entries of size $n\times n$:
\begin{displaymath}
A(1,2),\dots,A(\nu_{1}-1,\nu_{1}),
A(\nu_{1},1),
\end{displaymath}
\begin{displaymath}
A(\nu_{1}+1,\nu_{1}+2),\dots,
A(\nu_{1}+\nu_{2}-1,\nu_{1}+\nu_{2}),
A(\nu_{1}+\nu_{2},\nu_{1}+1),
\end{displaymath}
\begin{displaymath}
\dots,
A(\vert\nu\vert-\nu_{m(\nu)}+1,
\vert\nu\vert-\nu_{m(\nu)}+2),
\dots,A(\vert\nu\vert-1,\vert\nu\vert),
A(\vert\nu\vert,\vert\nu\vert-\nu_{m(\nu)}+1).
\end{displaymath}
Note that by convention, for each
$l\in \{1,\dots,m(\nu)\}$ such that
$\nu_{l}=1$, we have a single matrix
$A(\nu_{1}+\dots +\nu_{l},\nu_{1}+\dots +\nu_{l})$.
For $l\in \{1,\dots,m(\nu)\}$,
$\Pi_{\nu,l}(\Phi,A)$ will denote the product
\begin{multline}
\label{EqPialpha}
\Pi_{\nu,l}(\Phi,A)=
\Phi(x_{\nu_{1}+\dots +\nu_{l-1}+1})
A(\nu_{1}+\dots +\nu_{l-1}+1,
\nu_{1}+\dots +\nu_{l-1}+2)
\Phi(x_{\nu_{1}+\dots +\nu_{l-1}+2})
\\
\dots A(\nu_{1}+\dots +\nu_{l}-1,
\nu_{1}+\dots +\nu_{l})
\Phi(x_{\nu_{1}+\dots +\nu_{l}})
A(\nu_{1}+\dots +\nu_{l},
\nu_{1}+\dots +\nu_{l-1}+1).
\end{multline}
In case $\nu_{l}=1$,
$\Pi_{\nu,l}(\Phi,A)=\Phi(x_{\nu_{1}+\dots +\nu_{l}})
A(\nu_{1}+\dots +\nu_{l},\nu_{1}+\dots +\nu_{l})$.

Next, for $\beta\in\{1,2\}$, we will write an isomorphism for
\begin{displaymath}
\Big\langle \Big(\prod_{l=1}^{m(\nu)}
\Tr\Big(\Pi_{\nu,l}(\Phi,A)\Big)\Big)
F(\Tr(\Phi^{2})/2)\Big\rangle_{\beta,n}.
\end{displaymath}
It will involve the following (complex-valued) measure
$\mu_{\nu,\beta,n,A}^{x_{1}, x_{2},\dots , x_{\vert\nu\vert}}$
on $\vert\nu\vert/2$-tuples of nearest-neighbor paths on
$\mathcal{G}$:
\begin{displaymath}
\mu_{\nu,\beta,n,A}^{x_{1}, x_{2},\dots , x_{\vert\nu\vert}}
(d\gamma_{1},\dots,d\gamma_{\vert\nu\vert/2})=
\sum_{\rho\in\mathcal{R}_{\nu}}
w_{\nu,\beta}(\rho) 
\Big(\prod_{\mathtt{t}\in\mathcal{T}_{\nu}(\rho)}
\Tr(\Pi_{\mathtt{t}}(A))
\Big)
\mu_{\textbf{p}_{\nu}(\rho)}
^{x_{1}, x_{2},\dots , x_{\vert\nu\vert}}
(d\gamma_{1},\dots,d\gamma_{\vert\nu\vert/2}).
\end{displaymath}
Here, $\Pi_{\mathtt{t}}(A)$ is a product of matrices 
of the form $A(k,k')$ or $A(k',k)^{\mathsf{T}}$,
where $^{\mathsf{T}}$ denotes the transpose 
(and not the adjoint). 
For each sequence
$k\rightarrow k'$ in the trail $\mathtt{t}$ we add the factor
$A(k,k')$ to the product, and for each
sequence $k\leftarrow k'$, we add the factor
$A(k',k)^{\mathsf{T}}$, 
all by respecting cyclic order of the trail.
For instance, if 
\begin{equation}
\label{EqOneTrail}
\mathtt{t}=
(5,\rightarrow,6,\doteq,7,\leftarrow,6,\doteq,7,\rightarrow,5,\doteq,8,\rightarrow,8,\doteq),
\end{equation}
which is one of the trails on 
Figure \eqref{FigRibbonGraph}, then
\begin{displaymath}
\Pi_{\mathtt{t}}(A) =
A(5,6)A(7,6)^{\mathsf{T}}A(7,5)A(8,8).
\end{displaymath}
Note that while the product 
$\Pi_{\mathtt{t}}(A)$ depends on the particular representative of the equivalence class $\mathtt{t}$, its trace does not. Indeed, the trace is invariant by a cyclic permutation of the factors. Moreover, reversing the direction of a representative of 
$\mathtt{t}$ amounts to taking the transpose of the product, which has the same trace.

Next we give examples of measures 
$\mu_{\nu,\beta,n,A}^{x_{1}, x_{2},\dots , x_{\vert\nu\vert}}$:
\begin{eqnarray*}
\mu_{\nu=(2),\beta=1,n,A}^{x_{1}, x_{2}}&=&
\Big(
\dfrac{1}{2}\Tr(A(1,2))\Tr(A(2,1))
+\dfrac{1}{2}\Tr(A(1,2)A(2,1)^{\mathsf{T}})
\Big)
\mu^{x_{1}, x_{2}},
\\
\mu_{\nu=(1,1),\beta=1,n,A}^{x_{1}, x_{2}}&=&
\Big(
\dfrac{1}{2}\Tr(A(1,1)A(2,2))
+\dfrac{1}{2}\Tr(A(1,1) A(2,2)^{\mathsf{T}})
\Big)
\mu^{x_{1}, x_{2}},
\\
\mu_{\nu=(2),\beta=2,n,A}^{x_{1}, x_{2}}&=&
\Tr(A(1,2))\Tr(A(2,1))
\mu^{x_{1}, x_{2}},
\\
\mu_{\nu=(1,1),\beta=2,n,A}^{x_{1}, x_{2}}&=&
\Tr(A(1,1)A(2,2))
\mu^{x_{1}, x_{2}}.
\end{eqnarray*}

Note that if all of the matrices $A(k,k')$ are equal to
$I_n$, the $n \times n$ identity matrix, then
$\mu_{\nu,\beta,n,A}^{x_{1}, x_{2},\dots , x_{\vert\nu\vert}}$ is just
$\mu_{\nu,\beta,n}^{x_{1}, x_{2},\dots , x_{\vert\nu\vert}}$, because all of the traces $\Tr(\Pi_{\mathtt{t}}(A))$ equal then $n$.

For $\beta=4$, we will need a slightly different setting. We consider matrices $\widetilde{A}(k,k')$, with the same indices
$(k,k')$ as for the matrices $A(k,k')$, but instead the 
$\widetilde{A}(k,k')$'s are $n\times n$ quaternion-valued. The 
(signed) measure 
$\mu_{\nu,\beta=4,n,\widetilde{A}}
^{x_{1}, x_{2},\dots , x_{\vert\nu\vert}}$
will be defined  as follows
\begin{displaymath}
\mu_{\nu,\beta=4,n,\widetilde{A}}
^{x_{1}, x_{2},\dots , x_{\vert\nu\vert}}
(d\gamma_{1},\dots,d\gamma_{\vert\nu\vert/2})=
\sum_{\rho\in\mathcal{R}_{\nu}}
w_{\nu,\beta=4}(\rho) 
\Big(\prod_{\mathtt{t}\in\mathcal{T}_{\nu}(\rho)}
\Re(\Tr(\widetilde{\Pi}_{\mathtt{t}}(\widetilde{A})))
\Big)
\mu_{\textbf{p}_{\nu}(\rho)}
^{x_{1}, x_{2},\dots , x_{\vert\nu\vert}}
(d\gamma_{1},\dots,d\gamma_{\vert\nu\vert/2}).
\end{displaymath}
Here, $\widetilde{\Pi}_{\mathtt{t}}(\widetilde{A})$ is a product of matrices of the form $\widetilde{A}(k,k')$ or
$\widetilde{A}(k',k)^{\ast}$.
For each sequence
$k\rightarrow k'$ in the trail $\mathtt{t}$ we add the factor
$\widetilde{A}(k,k')$ to the product, and for each
sequence $k\leftarrow k'$, we add the factor
$\widetilde{A}(k',k)^{\ast}$, 
all by respecting cyclic order of the trail. Let us emphasize that for $\beta=4$, we use the quaternion adjoint
$^{\ast}$, and not the transpose
$^{\mathsf{T}}$.
For the trail \eqref{EqOneTrail}, one gets
\begin{displaymath}
\widetilde{\Pi}_{\mathtt{t}}(\widetilde{A}) =
\widetilde{A}(5,6)\widetilde{A}(7,6)^{\ast}
\widetilde{A}(7,5)\widetilde{A}(8,8).
\end{displaymath}
Next are two examples:
\begin{eqnarray*}
\mu_{\nu=(2),\beta=4,n,\widetilde{A}}^{x_{1}, x_{2}}&=&
\Big(
2\Re(\Tr(\widetilde{A}(1,2))\Tr(\widetilde{A}(2,1)))
-\Re(\Tr(\widetilde{A}(1,2)
\widetilde{A}(2,1)^{\ast}))
\Big)
\mu^{x_{1}, x_{2}},
\\
\mu_{\nu=(1,1),\beta=4,n,\widetilde{A}}^{x_{1}, x_{2}}&=&
\Big(
\dfrac{1}{2}
\Re(\Tr(\widetilde{A}(1,1)\widetilde{A}(2,2)))
+\dfrac{1}{2}
\Re(\Tr(\widetilde{A}(1,1) 
\widetilde{A}(2,2)^{\ast}))
\Big)
\mu^{x_{1}, x_{2}}.
\end{eqnarray*}

\begin{thm}
\label{ThmTopoExpA}
For $\beta\in\{1,2\}$ and
$F$ a bounded measurable function 
$\mathbb{R}^{V} \rightarrow \mathbb{R}$, one has the following equality:
\begin{multline*}
\Big\langle \Big(\prod_{l=1}^{m(\nu)}
\Tr\Big(\Pi_{\nu,l}(\Phi,A)\Big)\Big)
F(\Tr(\Phi^{2})/2)\Big\rangle_{\beta,n} 
\\= 
\int\displaylimits_{\gamma_{1},\dots \gamma_{\vert\nu\vert /2}}
\Big\langle
F\big(\Tr(\Phi^{2})/2+L(\gamma_{1})+\dots + L(\gamma_{\vert\nu\vert /2})\big)
\Big\rangle_{\beta,n}
\mu_{\nu,\beta,n,A}^{x_{1}, x_{2},\dots , x_{\vert\nu\vert}}
(d\gamma_{1},\dots ,d\gamma_{\vert\nu\vert /2})
,
\end{multline*}
where 
$\langle\cdot\rangle_{\beta,n}
\mu_{\nu,\beta,n,A}^{x_{1}, x_{2},\dots ,x_{\vert\nu\vert}}
(\cdot)$ is a product measure.
For $\beta=4$,
\begin{multline*}
\Big\langle \Big(\prod_{l=1}^{m(\nu)}
\Re\Big(\Tr\Big(\Pi_{\nu,l}(\Phi,\widetilde{A})\Big)\Big)\Big)
F(\Tr(\Phi^{2})/2)\Big\rangle_{\beta=4,n} 
\\= 
\int\displaylimits_{\gamma_{1},\dots \gamma_{\vert\nu\vert /2}}
\Big\langle
F\big(\Tr(\Phi^{2})/2+L(\gamma_{1})+\dots + L(\gamma_{\vert\nu\vert /2})\big)
\Big\rangle_{\beta,n}
\mu_{\nu,\beta=4,n,\widetilde{A}}^{x_{1}, x_{2},\dots , 
x_{\vert\nu\vert}}
(d\gamma_{1},\dots ,d\gamma_{\vert\nu\vert /2})
.
\end{multline*}
\end{thm}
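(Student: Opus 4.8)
The plan is to split the statement into a probabilistic part, supplied by the vector-valued BFS--Dynkin isomorphism (Theorem~\ref{ThmIsoKL} with a trivial connection), and a purely algebraic part, which is the ``$A$-decorated'' version of the one-matrix topological expansion of Theorem~\ref{ThmTopoExp1Matrix}. First I would note that $\Phi$ is just $d=\dim E_{\beta,n}$ i.i.d.\ scalar GFFs in disguise: fixing an orthonormal basis $(B_a)_{1\le a\le d}$ of $E_{\beta,n}$ for the inner product $(M,M')\mapsto\Re\Tr(MM')$ and writing $\Phi=\sum_a\varphi^{(a)}B_a$, the density \eqref{EqDensityMatrixGFF} factorizes into $d$ copies of \eqref{EqDensityGFF} because $\Tr(M(x)^2)=\sum_a\varphi^{(a)}(x)^2$; in particular $\Tr(\Phi^2)/2=\|\vec\varphi\|^2/2$ with $\vec\varphi=(\varphi^{(a)})_a$ an $\R^d$-valued GFF with flat connection, and $\langle\,\cdot\,\rangle_{\beta,n}=\E$. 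Then I would expand each trace over its entry indices,
\[
\prod_{l}\Tr\big(\Pi_{\nu,l}(\Phi,A)\big)=\sum_{(\vec r,\vec c)}\Big(\prod_{\text{corners}}A(\cdot,\cdot)_{\cdot\,\cdot}\Big)\prod_{k=1}^{\vert\nu\vert}\Phi(x_k)_{r_kc_k},
\]
substitute $\Phi(x_k)_{r_kc_k}=\sum_{a_k}(B_{a_k})_{r_kc_k}\varphi^{(a_k)}(x_k)$, and apply Theorem~\ref{ThmIsoKL} with $U\equiv I_d$ (so $\mathsf{hol}^U_{ij}(\gamma)=\delta_{ij}$) to each term $\E[\prod_k\varphi^{(a_k)}(x_k)F(\|\vec\varphi\|^2/2)]$. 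Regrouping by the resulting pair partition $\mathbf{p}$ of $\{1,\dots,\vert\nu\vert\}$ and summing out the labels $\vec a$ — which factorizes over pairs into Casimir components $\kappa^\beta_{(r,c),(r',c')}:=\sum_a(B_a)_{rc}(B_a)_{r'c'}$ — the whole statement reduces to the identity
\[
\sum_{(\vec r,\vec c)}\Big(\prod_{\text{corners}}A(\cdot,\cdot)_{\cdot\,\cdot}\Big)\prod_{\{k,k'\}\in\mathbf{p}}\kappa^{\beta}_{(r_k,c_k),(r_{k'},c_{k'})}=\sum_{\rho\in\mathcal{R}_{\nu,\mathbf{p}}}w_{\nu,\beta}(\rho)\prod_{\mathtt{t}\in\mathcal{T}_{\nu}(\rho)}\Tr\big(\Pi_{\mathtt{t}}(A)\big),
\]
one for each $\mathbf{p}$; plugging this back and using $\bigsqcup_{\mathbf{p}}\mathcal{R}_{\nu,\mathbf{p}}=\mathcal{R}_{\nu}$ together with the definition of $\mu^{x_1,\dots,x_{\vert\nu\vert}}_{\nu,\beta,n,A}$ then yields the formula. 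Everything in sight is a finite sum, so no convergence issue arises, and the product-(signed/complex-)measure assertion is immediate.

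To prove the displayed identity I would run 't~Hooft's double-line argument, letting the matrices $A$ ride along the index lines. Here $\kappa^{2}_{(r,c),(r',c')}=\delta_{rc'}\delta_{cr'}$ and $\kappa^{1}_{(r,c),(r',c')}=\tfrac12(\delta_{rr'}\delta_{cc'}+\delta_{rc'}\delta_{cr'})$. For $\beta=2$ the Casimir picks out the unique straight-edged $\rho\in\mathcal{R}_{\nu,\mathbf{p}}$ (for which $w_{\nu,2}(\rho)=1$, others $0$), and following an index line — it alternates a ribbon side, along which the index is unchanged, with a corner, where it is multiplied by an entry of the matrix $A(k,k')$ sitting there — one closes up after running once around a border cycle, the summed-out contribution of that cycle being exactly $\Tr(\Pi_{\mathtt{t}}(A))$ for its trail $\mathtt{t}$, so the full contraction is the product over the $f_{\nu}(\rho)$ cycles. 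For $\beta=1$ one first expands each $\kappa^{1}$ into its straight and twisted terms; the $2^{\vert\nu\vert/2}$ choices are in bijection with $\mathcal{R}_{\nu,\mathbf{p}}$, each with prefactor $2^{-\vert\nu\vert/2}=w_{\nu,1}(\rho)$, and the same line-chasing applies, a twisted edge reversing the traversal sense so that an entry of $A(k,k')$ met at a corner run against its clockwise sense is read as an entry of $A(k',k)^{\mathsf{T}}$ — precisely the rule defining $\Pi_{\mathtt{t}}(A)$ from the symbols $\rightarrow$, $\leftarrow$ of $\mathtt{t}$ (the $\nu_l=1$ ``single outgoing half-edge'' convention of Section~\ref{SubSecRibbon} being what keeps this consistent). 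This is the proof of Theorem~\ref{ThmTopoExp1Matrix} of \cite{BIPZ78PlanarDiagrams}, carried out while keeping the $A$'s at the corners; taking all $A(k,k')=I_n$ gives $\Tr(\Pi_{\mathtt{t}}(I_n))=n$ and recovers Theorem~\ref{ThmTopoExpField}.

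For $\beta=4$ I would run the same scheme through the $2n\times2n$ complex representation, using that $\mathtt{C}$ is a ring morphism so $\Re\Tr(\Pi_{\nu,l}(\Phi,\widetilde{A}))=\tfrac12\Tr\big(\mathtt{C}(\Phi(x_{k_1}))\,\mathtt{C}(\widetilde{A}(k_1,k_2))\cdots\big)$, or equivalently by computing the Casimir $\sum_aB_a\otimes B_a$ directly in $\mathcal{H}_n(\mathbb{H})^{\otimes 2}$. The propagator of $\mathtt{C}(\Phi)$ has, besides a GUE$(2n)$-type straight term $\delta_{IL}\delta_{JK}$, a symplectically twisted term carrying a relative minus sign and a conjugation by the standard symplectic form; chasing index lines as before, each border cycle contributes a factor $2n$, each use of the twisted term contributes signs, and assembling them — with the $\tfrac12$ from each $\tfrac12\Tr(\mathtt{C}(\cdot))$ and from each contraction — yields exactly $w_{\nu,4}(\rho)=(-2)^{\chi_{\nu}(\rho)}2^{-2m(\nu)+\vert\nu\vert/2}$, using $\chi_{\nu}(\rho)=m(\nu)-\vert\nu\vert/2+f_{\nu}(\rho)$. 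The same symplectic conjugation turns, at a corner run against its natural sense, the factor $\widetilde{A}(k,k')$ into the quaternionic adjoint $\widetilde{A}(k',k)^{\ast}$ rather than a transpose, and makes $\Re(\Tr(\widetilde{\Pi}_{\mathtt{t}}(\widetilde{A})))$ the well-defined per-cycle weight; this is the Mulase--Waldron combinatorics \cite{MulaseWaldron03GSE} decorated by the $\widetilde{A}$'s, and reassembling as above gives the case $\beta=4$. The probabilistic content is thus entirely absorbed by the known isomorphisms; the real work, and the main obstacle, is this decorated combinatorial identity, its most delicate point being the $\beta=4$ sign accounting — confirming that the symplectic twist contributes precisely the Euler-characteristic weight $(-2)^{\chi_{\nu}(\rho)}$ and that ``reading an $\widetilde{A}$ backwards'' produces the quaternionic adjoint, which is the structural reason the $\beta=4$ statement must use the adjoint and $\Re\Tr$ where $\beta\in\{1,2\}$ uses the transpose and $\Tr$.
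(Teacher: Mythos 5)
Your proposal is correct in outline, and its probabilistic half coincides with the paper's: decomposing $\Phi$ into $\dim E_{\beta,n}$ i.i.d.\ scalar GFFs, expanding the traces, applying the BFS--Dynkin isomorphism termwise and regrouping by the pair partition $\textbf{p}$ is exactly the content of Lemma \ref{LemMi}, and the identity you are then left to prove (per fixed $\textbf{p}$, the Wick contraction of the $A$-decorated trace product equals $\sum_{\rho\in\mathcal{R}_{\nu,\textbf{p}}}w_{\nu,\beta}(\rho)\prod_{\mathtt{t}}\Tr(\Pi_{\mathtt{t}}(A))$) is precisely Lemma \ref{LemTransfo}. Where you genuinely diverge is in how that combinatorial lemma is proved. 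You run the direct 't~Hooft double-line expansion, chasing index lines through the corners where the $A(k,k')$'s sit; the paper explicitly flags this route at the start of Section \ref{SecThm2} and rejects it for $\beta=4$ because of quaternion non-commutativity, opting instead for an induction on the number of ribbon edges in the style of Bryc--Pierce: condition on the last matrix $M^{(\vert\nu\vert/2)}$, apply the single-edge moment formulas of Lemma \ref{LemRecursion}, contract that edge, and recurse. Your workaround for $\beta=4$ --- passing to the $2n\times 2n$ complex representation $\mathtt{C}$, where the GSE propagator splits into a straight term and a symplectically twisted term, and using $\mathtt{C}(\widetilde{A}^{\ast})=J\,\mathtt{C}(\widetilde{A})^{\mathsf T}J^{-1}$ to explain why reading a corner backwards produces the quaternionic adjoint --- is legitimate and structurally sound; it amounts to redoing the Mulase--Waldron combinatorics with matrices riding on the index lines. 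What the paper's induction buys is that all of the delicate $\beta=4$ sign and weight accounting is localized in the elementary one-edge computations of Lemma \ref{LemRecursion} (themselves reduced to the quaternionic Wick formulas of Lemma \ref{LemQuaternionWick}), with the global weight $w_{\nu,\beta=4}(\rho)=(-2)^{\chi_{\nu}(\rho)}2^{-2m(\nu)+\vert\nu\vert/2}$ emerging from how $\chi_{\nu}$ changes under edge contraction; your route instead requires verifying globally that the per-diagram product of propagator factors, $\tfrac12$'s and signs assembles to exactly this weight, which you assert (correctly, and with the right citation) but do not carry out --- that assembly is the one substantive step left open in your sketch, and it is the same step the paper itself delegates to the reference rather than reproducing.
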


Theorem \ref{ThmTopoExpA}, which contains 
Theorem \ref{ThmTopoExpField} as a special case,
will be proved in Section \ref{SecThm2}.

\subsection{Isomorphisms and topological expansion for matrix-valued fields twisted by a connection}
\label{SubSecMatrixTwist}

We will denote by $\Ub_{\beta,n}$ the group $O(n)$ if $\beta=1$, 
$U(n)$ if $\beta=2$, and $U(n,\mathbb{H})$ if $\beta=4$. 
We consider a connection on $\mathcal{G}$,
$(U(x,y))_{\{x,y\}\in E}$, with
$U(x,y)\in \Ub_{\beta,n}$ and
\begin{displaymath}
U(y,x)=U(x,y)^{\ast}=U(x,y)^{-1}.
\end{displaymath}
Let $\langle\cdot\rangle_{\beta,n}^{U}$ be the following probability measure on $(E_{\beta,n})^{V}$, defined by the density
\begin{equation}
\label{EqDensityU}
\dfrac{1}{Z^{\mathcal{G},U}_{\beta,n}}
\exp\Big(
-\dfrac{1}{2}\sum_{x\in V}\kappa(x)\Tr(M(x)^{2})
-\dfrac{1}{2}\sum_{\{x,y\}\in E}
C(x,y)\Tr((M(y)-U(y,x)M(x)U(x,y))^{2})
\Big).
\end{equation}
Note that if $\{x,y\}\in E$ , then
\begin{displaymath}
\Tr((M(x)-U(x,y)M(y)U(y,x))^{2})
=
\Tr((M(y)-U(y,x)M(x)U(x,y))^{2}).
\end{displaymath}
We will denote by $\Phi$ the field under the measure
$\langle\cdot\rangle_{\beta,n}^{U}$. 
It is the matrix-valued 
Gaussian free field twisted by the connection $U$.
If $(\mathfrak{U}(x))_{x\in V}$ is a gauge transformation, then
$(\mathfrak{U}(x)^{-1}\Phi(x)\mathfrak{U}(x))_{x\in V}$ is the
field associated to the connection
$(\mathfrak{U}(x)^{-1}U(x,y)\mathfrak{U}(y))_{\{x,y\}\in E}$.
If the connection $U$ is trivial, then for all
$x\in V$, $\Phi(x)/\sqrt{G(x,x)}$ is distributed as a 
G$\beta$E$(n)$ matrix. 
This is not necessarily the case if the connection is non-trivial.

As in Section \ref{SubSecMatrixGFF},
we take 
$\nu=(\nu_{1},\nu_{2},\dots,\nu_{m(\nu)})$, where for all
$l\in\{1,2,\dots,m(\nu)\}$,
$\nu_{l}\in\mathbb{N}\setminus\{0\}$, and $\vert\nu\vert$ is even.
Let $x_{1}, x_{2},\dots , x_{\vert\nu\vert}$ be vertices in 
$V$, not necessarily distinct. As in Section \ref{SubSecMatrixGFF}, the $A(k,k')$'s respectively
$\widetilde{A}(k,k')$'s are 
families of $\vert\nu\vert$ square matrices with complex, respectively quaternionic entries of size $n\times n$.
The products $\Pi_{\nu,l}(\Phi,A)$ and 
$\Pi_{\nu,l}(\Phi,\widetilde{A})$ are defined as in
\eqref{EqPialpha}.

Next we will write an isomorphism for
\begin{displaymath}
\Big\langle \Big(\prod_{l=1}^{m(\nu)}
\Tr\Big(\Pi_{\nu,l}(\Phi,A)\Big)\Big)
F(\Tr(\Phi^{2})/2)\Big\rangle_{\beta,n}^{U},
\qquad \beta\in\{1,2\},
\end{displaymath}
and for
\begin{displaymath}
\Big\langle \Big(\prod_{l=1}^{m(\nu)}
\Re\Big(\Tr\Big(\Pi_{\nu,l}(\Phi,\widetilde{A})\Big)\Big)\Big)
F(\Tr(\Phi^{2})/2)\Big\rangle_{\beta,n}^{U},
\qquad \beta=4.
\end{displaymath}

For $\beta=1$,
we introduce the following (complex) measure
$\mu_{\nu,\beta=1,n,A,U}
^{x_{1}, x_{2},\dots , x_{\vert\nu\vert}}$
on $\vert\nu\vert/2$-tuples of nearest-neighbor paths on
$\mathcal{G}$:
\begin{multline*}
\mu_{\nu,\beta=1,n,A,U}^{x_{1}, x_{2},\dots , x_{\vert\nu\vert}}
(d\gamma_{1},\dots,d\gamma_{\vert\nu\vert/2})=\\
\sum_{\rho\in\mathcal{R}_{\nu}}
w_{\nu,\beta=1}(\rho) 
\Big(\prod_{\mathtt{t}\in\mathcal{T}_{\nu}(\rho)}
\Tr(
\Pi_{\mathtt{t}}(U,A)(\gamma_{1},\dots,\gamma_{\vert\nu\vert /2})
)
\Big)
\mu_{\textbf{p}_{\nu}(\rho)}
^{x_{1}, x_{2},\dots , x_{\vert\nu\vert}}
(d\gamma_{1},\dots,d\gamma_{\vert\nu\vert/2}).
\end{multline*}
Here, 
$\Pi_{\mathtt{t}}(U,A)(\gamma_{1},\dots,\gamma_{\vert\nu\vert /2})$ 
is a product of matrices 
of the form $A(k,k')$ or $A(k',k)^{\mathsf{T}}$, 
and $\mathsf{hol}^{U}(\gamma)$ or
$\mathsf{hol}^{U}(\gamma)^{\ast}$, $\gamma$ being one of the paths. 
For each sequence
$k\rightarrow k'$ in the trail $\mathtt{t}$ we add the factor
$A(k,k')$ to the product, and for each
sequence $k\leftarrow k'$, we add the factor
$A(k',k)^{\mathsf{T}}$, as in the construction of
$\mu_{\nu,\beta,n,A}^{x_{1}, x_{2},\dots , x_{\vert\nu\vert}}$.
Moreover, for each sequence $k\doteq k'$ with $k<k'$, a measure
$\mu^{x_{k},x_{k'}}(d\gamma_{i})$ is present, and we add to the product the factor $\mathsf{hol}^{U}(\gamma_{i})$.
For each sequence $k\doteq k'$ with $k>k'$, a measure
$\mu^{x_{k'},x_{k}}(d\gamma_{i})$ is present, and we add to the product the factor $\mathsf{hol}^{U}(\gamma_{i})^{\ast}
=\mathsf{hol}^{U}(\gamma_{i})^{\mathsf{T}}$.
In the product, the factors respect the cyclic order on the trail.

For $\beta=2$, we will need oriented trails. 
The (complex) measure 
$\mu_{\nu,\beta=2,n,A,U}^{x_{1}, x_{2},\dots , x_{\vert\nu\vert}}$
on  $\vert\nu\vert/2$-tuples of nearest-neighbor paths on
$\mathcal{G}$ is as follows:
\begin{multline*}
\mu_{\nu,\beta=2,n,A,U}^{x_{1}, x_{2},\dots , x_{\vert\nu\vert}}
(d\gamma_{1},\dots,d\gamma_{\vert\nu\vert/2})=\\
\sum_{\rho\in\mathcal{R}_{\nu}}
w_{\nu,\beta=2}(\rho) 
\Big(\prod_{\scriptsize{\overrightarrow{\mathtt{t}}
\in\overrightarrow{\mathcal{T}}_{\nu}(\rho)}}
\Tr(
\Pi_{\scriptsize{\overrightarrow{\mathtt{t}}}}
(U,A)(\gamma_{1},\dots,\gamma_{\vert\nu\vert /2})
)
\Big)
\mu_{\textbf{p}_{\nu}(\rho)}
^{x_{1}, x_{2},\dots , x_{\vert\nu\vert}}
(d\gamma_{1},\dots,d\gamma_{\vert\nu\vert/2}).
\end{multline*}
Here, $\Pi_{\scriptsize{\overrightarrow{\mathtt{t}}}}
(U,A)(\gamma_{1},\dots,\gamma_{\vert\nu\vert /2})$ 
is a product of matrices 
of the form $A(k,k')$  and $\mathsf{hol}^{U}(\gamma)$ or
$\mathsf{hol}^{U}(\gamma)^{\ast}$, $\gamma$ being one of the paths. 
For each sequence
$k\rightarrow k'$ in the oriented trail 
$\overrightarrow{\mathtt{t}}$ we add the factor
$A(k,k')$ to the product.
Moreover, for each sequence $k\doteq k'$ with $k<k'$, a measure
$\mu^{x_{k},x_{k'}}(d\gamma_{i})$ is present, and we add to the product the factor $\mathsf{hol}^{U}(\gamma_{i})$.
For each sequence $k\doteq k'$ with $k>k'$, a measure
$\mu^{x_{k'},x_{k}}(d\gamma_{i})$ is present, and we add to the product the factor $\mathsf{hol}^{U}(\gamma_{i})^{\ast}$.
In the product, the factors respect the cyclic order on the trail.
The reason we use oriented trails is that the reversal of the orientation of the trail changes the trace, as complex adjoints of holonomies appear.

For $\beta=4$, we return to unoriented trails.
We introduce the (signed) measure
$\mu_{\nu,\beta=4,n,\widetilde{A},U}^{x_{1}, x_{2},\dots, x_{\vert\nu\vert}}$
on $\vert\nu\vert/2$-tuples of nearest-neighbor paths in
$\mathcal{G}$, constructed as follows:
\begin{multline*}
\mu_{\nu,\beta=4,n,\widetilde{A},U}
^{x_{1}, x_{2},\dots, x_{\vert\nu\vert}}
(d\gamma_{1},\dots,d\gamma_{\vert\nu\vert/2})=\\
\sum_{\rho\in\mathcal{R}_{\nu}}
w_{\nu,\beta=4}(\rho) 
\Big(\prod_{\mathtt{t}\in\mathcal{T}_{\nu}(\rho)}
\Re(
\Tr(
\widetilde{\Pi}_{\mathtt{t}}(U,\widetilde{A})
(\gamma_{1},\dots,\gamma_{\vert\nu\vert /2})
))
\Big)
\mu_{\textbf{p}_{\nu}(\rho)}
^{x_{1}, x_{2},\dots , x_{\vert\nu\vert}}
(d\gamma_{1},\dots,d\gamma_{\vert\nu\vert/2}).
\end{multline*}
Here, $\widetilde{\Pi}_{\mathtt{t}}(U,\widetilde{A})
(\gamma_{1},\dots,\gamma_{\vert\nu\vert /2})$ 
is a product of matrices 
of the form $\widetilde{A}(k,k')$ or 
$\widetilde{A}(k',k)^{\ast}$, 
and $\mathsf{hol}^{U}(\gamma)$ or
$\mathsf{hol}^{U}(\gamma)^{\ast}$, $\gamma$ being one of the paths. 
For each sequence
$k\rightarrow k'$ in the trail $\mathtt{t}$ we add the factor
$\widetilde{A}(k,k')$ to the product, and for each
sequence $k\leftarrow k'$, we add the factor
$\widetilde{A}(k',k)^{\ast}$.
Moreover, for each sequence $k\doteq k'$ with $k<k'$, a measure
$\mu^{x_{k},x_{k'}}(d\gamma_{i})$ is present, and we add to the product the factor $\mathsf{hol}^{U}(\gamma_{i})$.
For each sequence $k\doteq k'$ with $k>k'$, a measure
$\mu^{x_{k'},x_{k}}(d\gamma_{i})$ is present, and we add to the product the factor $\mathsf{hol}^{U}(\gamma_{i})^{\ast}$.
In the product, the factors respect the cyclic order on the trail.

Next are some examples of measures 
$\mu_{\nu,\beta,n,A,U}
^{x_{1},x_{2},\dots,x_{\vert\nu\vert}}$ and
$\mu_{\nu,\beta=4,n,\widetilde{A},U}
^{x_{1},x_{2},\dots,x_{\vert\nu\vert}}$:
\begin{eqnarray*}
\mu_{\nu=(2),\beta=1,n,A,U}^{x_{1}, x_{2}}&=&
\Big(
\dfrac{1}{2}\Tr(A(1,2)\mathsf{hol}^{U}(\gamma)^{\ast})
\times\Tr(A(2,1)\mathsf{hol}^{U}(\gamma))
\\&&
+\dfrac{1}{2}\Tr(A(1,2)\mathsf{hol}^{U}(\gamma)^{\ast}
A(2,1)^{\mathsf{T}}\mathsf{hol}^{U}(\gamma))
\Big)
\mu^{x_{1}, x_{2}}(d\gamma),
\\
\mu_{\nu=(1,1),\beta=1,n,A,U}^{x_{1}, x_{2}}&=&
\Big(
\dfrac{1}{2}\Tr(A(1,1)\mathsf{hol}^{U}(\gamma)A(2,2)
\mathsf{hol}^{U}(\gamma)^{\ast})
\\&&
+\dfrac{1}{2}\Tr(A(1,1)\mathsf{hol}^{U}(\gamma)
A(2,2)^{\mathsf{T}}\mathsf{hol}^{U}(\gamma)^{\ast})
\Big)
\mu^{x_{1}, x_{2}}(d\gamma),
\end{eqnarray*}
\begin{eqnarray*}
\mu_{\nu=(2),\beta=2,n,A,U}^{x_{1}, x_{2}}&=&
\Tr(A(1,2)\mathsf{hol}^{U}(\gamma)^{\ast})
\times\Tr(A(2,1)\mathsf{hol}^{U}(\gamma))
\mu^{x_{1}, x_{2}}(d\gamma),
\\
\mu_{\nu=(1,1),\beta=2,n,A,U}^{x_{1}, x_{2}}&=&
\Tr(A(1,1)\mathsf{hol}^{U}(\gamma)
A(2,2)\mathsf{hol}^{U}(\gamma)^{\ast})
\mu^{x_{1}, x_{2}}(d\gamma),
\end{eqnarray*}
\begin{eqnarray*}
\mu_{\nu=(2),\beta=4,n,\widetilde{A},U}^{x_{1}, x_{2}}&=&
\Big(
2\Re(\Tr(\widetilde{A}(1,2)\mathsf{hol}^{U}(\gamma)^{\ast}))
 \times\Re(\Tr(\widetilde{A}(2,1)\mathsf{hol}^{U}(\gamma)))
\\&&
-\Re(\Tr(\widetilde{A}(1,2)\mathsf{hol}^{U}(\gamma)^{\ast}
\widetilde{A}(2,1)^{\ast}\mathsf{hol}^{U}(\gamma)))
\Big)
\mu^{x_{1}, x_{2}}(d\gamma),
\\
\mu_{\nu=(1,1),\beta=4,n,\widetilde{A},U}^{x_{1}, x_{2}}&=&
\Big(
\dfrac{1}{2}\Re(\Tr(\widetilde{A}(1,1)\mathsf{hol}^{U}(\gamma)
\widetilde{A}(2,2)\mathsf{hol}^{U}(\gamma)^{\ast}))
\\&&
+\dfrac{1}{2}\Re(\Tr(\widetilde{A}(1,1)\mathsf{hol}^{U}(\gamma)
\widetilde{A}(2,2)^{\ast}\mathsf{hol}^{U}(\gamma)^{\ast}))
\Big)
\mu^{x_{1}, x_{2}}(d\gamma).
\end{eqnarray*}

\begin{thm}
\label{ThmTopoExpAU}
For $\beta\in\{1,2\}$ and
$F$ a bounded measurable function 
$\mathbb{R}^{V} \rightarrow \mathbb{R}$, 
one has the following equality:
\begin{multline}
\label{EqIsoMatrixAU}
\Big\langle \Big(\prod_{l=1}^{m(\nu)}
\Tr\Big(\Pi_{\nu,l}(\Phi,A)\Big)\Big)
F(\Tr(\Phi^{2})/2)\Big\rangle_{\beta,n}^{U}
\\= 
\int\displaylimits_{\gamma_{1},\dots \gamma_{\vert\nu\vert /2}}
\Big\langle
F\big(\Tr(\Phi^{2})/2
+L(\gamma_{1})+\dots + L(\gamma_{\vert\nu\vert /2})\big)
\Big\rangle_{\beta,n}^{U}
\mu_{\nu,\beta,n,A,U}^{x_{1}, x_{2},\dots , x_{\vert\nu\vert}}
(d\gamma_{1},\dots ,d\gamma_{\vert\nu\vert /2})
,
\end{multline}
where 
$\langle\cdot\rangle_{\beta,n}^{U}
\mu_{\nu,\beta,n,A,U}^{x_{1}, x_{2},\dots ,x_{\vert\nu\vert}}
(\cdot)$ is a product measure.
For $\beta=4$,
\begin{multline}
\label{EqIsoMatrixAU4}
\Big\langle \Big(\prod_{l=1}^{m(\nu)}
\Re\Big(\Tr\Big(\Pi_{\nu,l}(\Phi,\widetilde{A})\Big)\Big)\Big)
F(\Tr(\Phi^{2})/2)\Big\rangle_{\beta=4,n}^{U}
\\= 
\int\displaylimits_{\gamma_{1},\dots \gamma_{\vert\nu\vert /2}}
\Big\langle
F\big(\Tr(\Phi^{2})/2
+L(\gamma_{1})+\dots + L(\gamma_{\vert\nu\vert /2})\big)
\Big\rangle_{\beta=4,n}^{U}
\mu_{\nu,\beta=4,n,\widetilde{A},U}
^{x_{1}, x_{2},\dots , x_{\vert\nu\vert}}
(d\gamma_{1},\dots ,d\gamma_{\vert\nu\vert /2}).
\end{multline}
\end{thm}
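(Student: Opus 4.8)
First I would reduce the matrix-valued setting to a vector-valued one and invoke Kassel and Lévy's twisted isomorphism. Vectorizing the matrices, the Dirichlet form in \eqref{EqDensityU} becomes $\frac12\sum_{x\in V}\kappa(x)\Re\Tr(\Phi(x)^{2})+\frac12\sum_{\{x,y\}\in E}C(x,y)\Re\Tr\big((\Phi(y)-\mathrm{Ad}(U(y,x))\Phi(x))^{2}\big)$, where $\mathrm{Ad}(U)$ denotes the conjugation $M\mapsto U^{-1}MU$; this is an orthogonal transformation of $E_{\beta,n}$ for the inner product $(M,M')\mapsto\Re\Tr(MM')$, in all three cases $\beta\in\{1,2,4\}$. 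Hence $\Phi$ is the $E_{\beta,n}$-valued Gaussian free field twisted by the orthogonal connection $\mathrm{Ad}(U)$, and Theorem~\ref{ThmIsoKL}, or equivalently Lemma~\ref{LemIsoKLrew}, applies to it with $F$ read as $F(\Tr(\Phi^{2})/2+\sum_i L(\gamma_i))$. The point that makes the $n$-dimensional structure reappear is that $\mathrm{Ad}(U)$ acts by left and right multiplication, so its holonomy factorizes into $\mathsf{hol}^{U}(\gamma)$ on one side and $\mathsf{hol}^{U}(\gamma)^{\ast}$ on the other. Concretely, pairing an entry of $\Phi(x_{k})$ with an entry of $\Phi(x_{k'})$ under the twisted isomorphism produces a bridge measure $\mu^{x_{k},x_{k'}}(d\gamma)$ carrying one factor $\mathsf{hol}^{U}_{ij}(\gamma)$ together with the Kronecker-delta contractions imposed by the G$\beta$E covariance; by time reversal and \eqref{EqHolAst} the same pair may instead be written with $\mu^{x_{k'},x_{k}}(d\gamma)$ and a factor $\mathsf{hol}^{U}(\gamma)^{\ast}$, which is the source of the $k<k'$ versus $k>k'$ rule in the definitions of $\mu_{\nu,\beta,n,A,U}^{x_{1},\dots,x_{\vert\nu\vert}}$.

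Next I would expand $\prod_{l}\Tr(\Pi_{\nu,l}(\Phi,A))$ (respectively $\prod_{l}\Re\Tr(\Pi_{\nu,l}(\Phi,\widetilde{A}))$ for $\beta=4$) as a sum over $\{1,\dots,n\}$-valued internal indices, apply the twisted isomorphism to the resulting polynomial in the entries of $\Phi$, and sum over all internal indices. This should produce: a sum over pairings $\textbf{p}$ of $\{1,\dots,\vert\nu\vert\}$; for each pair $\{k,k'\}$ with $k<k'$ a bridge measure $\mu^{x_{k},x_{k'}}(d\gamma)$ (hence the convention $a_i<b_i$); and, for each border cycle of the ribbon graph of $\textbf{p}$, the trace of the ordered product of the factors $A(k,k')$, $A(k',k)^{\mathsf T}$, $\mathsf{hol}^{U}(\gamma)$ and $\mathsf{hol}^{U}(\gamma)^{\ast}$ met while travelling along the corresponding trail $\mathtt{t}$, i.e.\ exactly $\Tr(\Pi_{\mathtt{t}}(U,A)(\gamma_{1},\dots,\gamma_{\vert\nu\vert/2}))$, times $F$ shifted by the occupation fields. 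The symbols $\rightarrow$ and $\leftarrow$ are produced by a clockwise, respectively counterclockwise, turn at a vertex and contribute $A(k,k')$, respectively $A(k',k)^{\mathsf T}$; the symbols $\doteq$ are produced by crossing an edge and contribute $\mathsf{hol}^{U}(\gamma_{i})$ or $\mathsf{hol}^{U}(\gamma_{i})^{\ast}$ according as $k<k'$ or $k>k'$; and the invariance of the trail under cyclic permutation and reversal matches the cyclicity of the trace and its invariance under transposition, as already noted after \eqref{EqOneTrail}.

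Finally I would regroup the pairings. For a fixed $\textbf{p}$, summing over $\rho\in\mathcal{R}_{\nu,\textbf{p}}$ with the multiplicities imposed by the G$\beta$E covariance reproduces the weights $w_{\nu,\beta}(\rho)$, the combinatorial content being the same as in the one-matrix Theorem~\ref{ThmTopoExp1Matrix} and the untwisted Theorems~\ref{ThmTopoExpField} and~\ref{ThmTopoExpA}: the holonomy and $A$-insertions change only what sits inside the traces along the trails. For $\beta=2$ the G$\beta$E covariance keeps only orientation-preserving (straight) gluings, so only the oriented trails $\overrightarrow{\mathcal{T}}_{\nu}(\rho)$ survive and the symbol $\leftarrow$ never occurs, which is why no transposes appear in $\Pi_{\overrightarrow{\mathtt{t}}}(U,A)$; for $\beta=1$ both straight and twisted gluings occur, each with weight $1/2$, matching $w_{\nu,1}(\rho)=2^{-\vert\nu\vert/2}$; for $\beta=4$ I would pass through the complex representation $\mathtt{C}$ of quaternions, replacing $n$ by $2n$, $\Tr$ by $\frac12\Tr\circ\mathtt{C}=\Re\Tr$ and transposes by quaternion adjoints, the signs $(-2)^{\chi_{\nu}(\rho)}$ arising as in Mulase--Waldron \cite{MulaseWaldron03GSE}; this gives \eqref{EqIsoMatrixAU4}. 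That $\langle\cdot\rangle_{\beta,n}^{U}\,\mu_{\nu,\beta,n,A,U}^{x_{1},\dots,x_{\vert\nu\vert}}(\cdot)$ is a product measure for $\beta\in\{1,2\}$ follows, exactly as in Theorems~\ref{ThmTopoExpField}--\ref{ThmTopoExpA}, from the fact that Theorem~\ref{ThmIsoKL} separates the path integral from the expectation over the shifted field.

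I expect the main obstacle to be the index bookkeeping of the second step: checking that contracting all internal indices, now decorated by holonomy matrices, yields precisely $\Pi_{\mathtt{t}}(U,A)$, respectively $\widetilde{\Pi}_{\mathtt{t}}(U,\widetilde{A})$, with the stated placement of $^{\mathsf T}$ versus $^{\ast}$ and of $\mathsf{hol}^{U}(\gamma)$ versus $\mathsf{hol}^{U}(\gamma)^{\ast}$, and in the correct cyclic order. The delicate points will be to make the time reversal relating $\mu^{x,y}$ and $\mu^{y,x}$ interact with \eqref{EqHolAst} so that the $k<k'$ versus $k>k'$ convention is coherent, and, for $\beta=4$, to track the non-commutativity of quaternions, which forces $\Re\Tr$ and the quaternion adjoint (rather than the transpose) throughout the construction of $\widetilde{\Pi}_{\mathtt{t}}$ and of the weights $w_{\nu,4}$. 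Once these conventions are fixed, the remainder is the same, if lengthy, reorganization as in the untwisted case, with the scalar Green's function $G(x,y)$ replaced by the holonomy-weighted propagator.
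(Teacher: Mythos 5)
Your proposal is correct in substance and its first half coincides with the paper's: the reduction of the matrix field to an $E_{\beta,n}$-valued GFF twisted by the conjugation connection $M\mapsto U(x,y)MU(x,y)^{\ast}$ (orthogonal for $(M,M')\mapsto\Re\Tr(MM')$), followed by an application of Lemma~\ref{LemIsoKLrew}, is exactly the content of the paper's Lemma~\ref{LemMiHol}, including the factorization of the induced holonomy into a left factor $\mathsf{hol}^{U}(\gamma)$ and a right factor $\mathsf{hol}^{U}(\gamma)^{\ast}$. Where you diverge is in the second half, the computation of the G$\beta$E moments decorated by holonomies. You propose to expand over internal indices, contract, and regroup into ribbon pairings directly, handling $\beta=4$ through the complex representation $\mathtt{C}$ as in Mulase--Waldron \cite{MulaseWaldron03GSE}; this is viable, but it is precisely the route the paper declares ``less tractable'' for $\beta=4$ and deliberately avoids. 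The paper's device is instead to absorb all holonomies into modified deterministic matrices, e.g.\ $\mathcal{A}(a_{i},a_{j})=\mathsf{hol}^{U}(\gamma_{i})^{\ast}A(a_{i},a_{j})\mathsf{hol}^{U}(\gamma_{j})$ and $\mathcal{A}(b_{i},b_{j})=A(b_{i},b_{j})$, so that $\Tr(\Pi_{\nu,l,\textbf{p}}(M,U,A)(\gamma_{1},\dots,\gamma_{|\nu|/2}))=\Tr(\Pi_{\nu,l,\textbf{p}}(M,\mathcal{A}))$ and the twisted moment identity becomes \emph{verbatim} the untwisted Lemma~\ref{LemTransfo} (itself proved by an edge-contraction recursion following Bryc--Pierce \cite{BrycPierce09GSE}), with no new combinatorics needed beyond Theorem~\ref{ThmTopoExpA}. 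What the paper's route buys is that the quaternionic non-commutativity and the placement of $^{\mathsf T}$ versus $^{\ast}$ are settled once, in the untwisted setting; what your route buys is a self-contained derivation that makes the origin of each factor in $\Pi_{\mathtt{t}}(U,A)$ visible, at the cost of re-doing (or re-citing from \cite{MulaseWaldron03GSE}) the decorated Wick calculus, including the verification that the symplectic relation $\mathtt{C}(U)^{\mathsf T}J\,\mathtt{C}(U)=J$ turns the twisted contractions into quaternion adjoints --- the very bookkeeping you correctly identify as the delicate point.
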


Theorem \ref{ThmTopoExpAU} will be proved in Section \ref{SecThm3}.

\begin{rem}
Note that in the measures
$\mu_{\nu,\beta,n,A,U}^{x_{1}, x_{2},\dots,x_{\vert\nu\vert}}
(d\gamma_{1},\dots ,d\gamma_{\vert\nu\vert /2})$
for $\beta\in\{1,2\}$,  and in
$\mu_{\nu,\beta=4,n,\widetilde{A},U}
^{x_{1}, x_{2},\dots,x_{\vert\nu\vert}}
(d\gamma_{1},\dots ,d\gamma_{\vert\nu\vert /2})$,
the holonomy along each path $\gamma_{i}$ appears twice.
\end{rem}

\begin{rem}
If one does not have 
$x_{\nu_{1}+\dots + \nu_{l-1} + 1}
= x_{\nu_{1}+\dots + \nu_{l-1} + 2}
= \dots =  x_{\nu_{1}+\dots + \nu_{l}}$,
then from a geometrical viewpoint it is not very natural
to consider just the trace
\begin{displaymath}
\Tr\Big(
\prod_{k=\nu_{1}+\dots + \nu_{l-1} + 1}^{\nu_{1}+\dots + \nu_{l}}
\Phi(x_{k})
\Big) .
\end{displaymath}
This is because then the $\Phi(x_{k})$ live on fibers
above different points of the base.
So one needs a way to compare fibers above different points.
For this one can intertwine the matrices
$A(k,k')$ and $\widetilde{A}(k,k')$, 
with $A(k,k')$ real symmetric $(\beta = 1)$,
respectively complex Hermitian $(\beta = 2)$,
and $\widetilde{A}(k,k')$
quaternionic Hermitian $(\beta = 4)$.
However, it turns out that
the identities of Theorem \ref{ThmTopoExpAU}
are the same for $A(k,k')$ and $\widetilde{A}(k,k')$
being more general.
\end{rem}

\begin{rem}
Consider the particular case when the connection $U$ is trivial, 
i.e. for any closed path (loop) $\gamma$, 
$\mathsf{hol}^{U}(\gamma)=I_n$. 
There is a gauge transformation
$\mathfrak{U}:V\rightarrow\Ub_{\beta,n}$
such that
\begin{displaymath}
\forall x,y\in V
\text{ such that }
\{x,y\}\in E,~~
\mathfrak{U}(x)^{-1}\mathfrak{U}(y) = U(x,y).
\end{displaymath}
Then for any
$x,y,\in V$ and $\gamma$ nearest-neighbor path from $x$ to $y$,
\begin{displaymath}
\mathsf{hol}^{U}(\gamma)=
\mathfrak{U}(x)^{-1}\mathfrak{U}(y).
\end{displaymath}
The field $\Phi$ under $\langle\cdot\rangle^{U}_{\beta,n}$ 
has the same law as 
$(\mathfrak{U}(x)^{-1}\Phi(x)\mathfrak{U}(x))_{x\in V}$ under
$\langle\cdot\rangle_{\beta,n}$.
So, in the particular case of a trivial connection,
Theorem \ref{ThmTopoExpAU} follows directly from
Theorem \ref{ThmTopoExpA}. 
\end{rem}

\medskip

A special case of particular interest in Theorem \ref{ThmTopoExpAU}
is when all the matrices $A(k,k')$ and 
$\widetilde{A}(k,k')$ equal $I_n$ and
\begin{displaymath}
x_{1}=\dots = x_{\nu_{1}},
\qquad
x_{\nu_{1}+1} =
\dots = x_{\nu_{1}+\nu_{2}},
\qquad
\dots ,
\qquad
x_{\vert\nu\vert-\nu_{m(\nu)}+1} = \dots
= x_{\vert\nu\vert}.
\end{displaymath}
Then, on the left-hand side of
\eqref{EqIsoMatrixAU}, we have
\begin{displaymath}
\forall l\in \{1,\dots,m(\nu)\},
\Tr(\Pi_{\nu,l}(\Phi,I_{n}))
=\sum_{i=1}^{n}\lambda_{i}(
x_{\nu_{1}+\dots +\nu_{l}})^{\nu_{l}},
\end{displaymath}
where
$\lambda_{1}(x)\geq\lambda_{2}(x)\geq\dots \geq\lambda_{n}(x)$ 
is the family of eigenvalues of $\Phi$.
On the right-hand side of
\eqref{EqIsoMatrixAU} appears a product of Wilson loops.
Indeed, each
$\Pi_{\mathtt{t}}(U,I_{n})(\gamma_{1},\dots,\gamma_{\vert\nu\vert /2})$,
$\Pi_{\scriptsize{\overrightarrow{\mathtt{t}}}}(U,I_{n})(\gamma_{1},\dots,\gamma_{\vert\nu\vert /2})$
and
$\widetilde{\Pi}_{\mathtt{t}}(U,I_{n})
(\gamma_{1},\dots,\gamma_{\vert\nu\vert /2})$ 
is then a holonomy along a loop formed by concatenating some of the paths $\gamma_{k}$. In the example of Figure
\ref{FigRibbonGraphstr}, the holonomies are
\begin{displaymath}
\mathsf{hol}^{U}(\gamma_{1})\mathsf{hol}^{U}(\gamma_{2})
\mathsf{hol}^{U}(\overleftarrow{\gamma_{1}})
\mathsf{hol}^{U}(\overleftarrow{\gamma_{2}}),
\qquad
\mathsf{hol}^{U}(\gamma_{3})\mathsf{hol}^{U}(\gamma_{4})
\mathsf{hol}^{U}(\overleftarrow{\gamma_{4}}),
\qquad \text{and}
\qquad
\mathsf{hol}^{U}(\overleftarrow{\gamma_{3}}).
\end{displaymath}
where $\gamma_{1}$ and $\gamma_{2}$ are paths from 
$x_{4}$ to $x_{4}$, $\gamma_{3}$ is a path from
from $x_{7}$ to $x_{7}$ and $\gamma_{4}$ a path from
$x_{7}$ to $x_{8}$. In the example of Figure
\ref{FigRibbonGraph}, we get
\begin{displaymath}
\mathsf{hol}^{U}(\gamma_{1})
\mathsf{hol}^{U}(\overleftarrow{\gamma_{2}})
\mathsf{hol}^{U}(\overleftarrow{\gamma_{1}})
\mathsf{hol}^{U}(\overleftarrow{\gamma_{2}}),
\qquad
\text{and}
\qquad
\mathsf{hol}^{U}(\gamma_{3})\mathsf{hol}^{U}(\gamma_{3})\mathsf{hol}^{U}(\gamma_{4})
\mathsf{hol}^{U}(\overleftarrow{\gamma_{4}}).
\end{displaymath}
Note that since the joint distribution of all eigenvalues above all vertices is invariant under gauge transformations, 
it is natural that expectations with respect to only these eigenvalues involve only Wilson loops;
see Section \ref{SubSecConx} and 
\cite{Giles81Reconstruct,Sengupta94GaugeInvariant,TLevy04WilsonLoops}. 
We summarize this paragraph in the following corollary.

\begin{cor}
\label{CorWilsonLoops}
For $\beta\in\{1,2,4\}$ and $U$ a non-trivial connection, 
in the isomorphism for
\begin{displaymath}
\Big\langle \prod_{l=1}^{m(\nu)}
\Big(\sum_{i=1}^{n}
\lambda_{i}(x_{\nu_{1}+\dots + \nu_{l}})^{\nu_{l}}
\Big)
F\Big(\dfrac{1}{2}\sum_{i=1}^{n}
\lambda_{i}^{2}\Big)\Big\rangle_{\beta,n}^{U},
\end{displaymath}
compared to the case of a trivial connection
\eqref{EqIsoEV}, each power of $n$ in the topological expansion, that is $n^{f_{\nu}(\rho)}$, is replaced by a product of
$f_{\nu}(\rho)$ Wilson loops corresponding to holonomies along random walk loops, 
one for each boundary cycle of the ribbon in the ribbon pairing $\rho$.
\end{cor}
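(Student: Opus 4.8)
The plan is to obtain the corollary as a specialization of Theorem \ref{ThmTopoExpAU}. Take all the matrices $A(k,k')$ (resp.\ $\widetilde{A}(k,k')$ when $\beta=4$) equal to $I_n$, and impose the coincidences $x_{1}=\dots=x_{\nu_{1}}$, $x_{\nu_{1}+1}=\dots=x_{\nu_{1}+\nu_{2}}$, \dots, $x_{\vert\nu\vert-\nu_{m(\nu)}+1}=\dots=x_{\vert\nu\vert}$. Then $\Pi_{\nu,l}(\Phi,I_{n})=\Phi(x_{\nu_{1}+\dots+\nu_{l}})^{\nu_{l}}$, so $\Tr(\Pi_{\nu,l}(\Phi,I_{n}))=\sum_{i=1}^{n}\lambda_{i}(x_{\nu_{1}+\dots+\nu_{l}})^{\nu_{l}}$ (this trace is already real, being that of a power of a Hermitian matrix, so the $\Re$ in the $\beta=4$ case is harmless). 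Hence the left-hand side of \eqref{EqIsoMatrixAU}, resp.\ \eqref{EqIsoMatrixAU4}, is exactly the eigenvalue observable appearing in the corollary.

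On the right-hand side, with $A=I_{n}$, in each summand indexed by $\rho\in\mathcal{R}_{\nu}$ the product $\Pi_{\mathtt{t}}(U,I_{n})(\gamma_{1},\dots,\gamma_{\vert\nu\vert/2})$ (resp.\ $\Pi_{\overrightarrow{\mathtt{t}}}$ for $\beta=2$, $\widetilde{\Pi}_{\mathtt{t}}$ for $\beta=4$) loses all its $I_{n}$-factors and reduces to the product, taken in the cyclic order along the trail $\mathtt{t}$, of the factors $\mathsf{hol}^{U}(\gamma_{i})$ or $\mathsf{hol}^{U}(\gamma_{i})^{\ast}$ attached to its $\doteq$-steps. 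The key point I would then verify is that these holonomy factors compose into the holonomy of an honest loop: since two ribbon half-edges that are consecutive on a border cycle of $\rho$ sit at a common vertex, the endpoints of the successive factors match, and using $\mathsf{hol}^{U}(\gamma_{i})^{\ast}=\mathsf{hol}^{U}(\overleftarrow{\gamma_{i}})$ from \eqref{EqHolAst} the product equals $\mathsf{hol}^{U}(\gamma_{\mathtt{t}})$, where $\gamma_{\mathtt{t}}$ is the closed nearest-neighbor path obtained by concatenating the $\gamma_{i}$'s — each traversed forwards if its $\doteq$-step has $k<k'$ and backwards if $k>k'$ — in the cyclic order prescribed by $\mathtt{t}$. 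This is precisely the computation carried out on the examples of Figures \ref{FigRibbonGraphstr} and \ref{FigRibbonGraph} in the paragraph preceding the corollary. Consequently $\Tr(\Pi_{\mathtt{t}}(U,I_{n}))$, resp.\ $\Re(\Tr(\widetilde{\Pi}_{\mathtt{t}}(U,I_{n})))$ for $\beta=4$, is the Wilson loop observable of $\gamma_{\mathtt{t}}$, which is well defined independently of the chosen representative of the class $\mathtt{t}$: the trace is cyclic, and reversing the representative transposes or quaternion-adjoints the product, leaving the trace, resp.\ its real part, unchanged, exactly as recorded in Section \ref{SubSecConx}.

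Since $\mathcal{T}_{\nu}(\rho)$, resp.\ $\overrightarrow{\mathcal{T}}_{\nu}(\rho)$, consists of $f_{\nu}(\rho)$ trails, one per border cycle of $\rho$, the coefficient multiplying the path measure $\mu^{x_{1},\dots,x_{\vert\nu\vert}}_{\mathbf{p}_{\nu}(\rho)}$ in $\mu_{\nu,\beta,n,I_{n},U}$ (resp.\ $\mu_{\nu,\beta=4,n,I_{n},U}$) is $w_{\nu,\beta}(\rho)$ times a product of $f_{\nu}(\rho)$ Wilson loops along random-walk loops. Comparing with \eqref{EqIsoEV}, which follows from Theorem \ref{ThmTopoExpField} and in which the same coefficient is $w_{\nu,\beta}(\rho)\,n^{f_{\nu}(\rho)}$ — because for $A=I_{n}$ each factor $\Tr(\Pi_{\mathtt{t}}(I_{n}))=\Tr(I_{n})=n$ — we conclude that passing from the flat connection to the non-flat connection $U$ replaces each $n^{f_{\nu}(\rho)}$ by the corresponding product of $f_{\nu}(\rho)$ Wilson loops, which is the assertion of the corollary.

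The only genuinely nontrivial step is the endpoint-matching in the second paragraph — that the holonomy factors read cyclically along a trail do assemble into the holonomy of a bona fide concatenated loop — but this is a direct unwinding of the definitions of $\mathcal{T}_{\nu}(\rho)$ and of the measures $\mu_{\nu,\beta,n,A,U}$, and is exactly what the worked examples preceding the statement illustrate; everything else is the specialization $A=I_{n}$ of Theorem \ref{ThmTopoExpAU} together with the elementary identity $\Tr(I_{n})=n$.
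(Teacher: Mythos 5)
Your proposal is correct and follows essentially the same route as the paper, which establishes the corollary in the paragraph immediately preceding its statement: specialize Theorem \ref{ThmTopoExpAU} to $A(k,k')=I_{n}$ (resp.\ $\widetilde{A}(k,k')=I_{n}$) with coinciding arguments inside each trace, observe that each $\Pi_{\mathtt{t}}(U,I_{n})$ is the holonomy of a loop obtained by concatenating the $\gamma_{i}$'s and their reversals along the border cycle, and compare with \eqref{EqIsoEV} where the corresponding factor is $\Tr(I_{n})^{f_{\nu}(\rho)}=n^{f_{\nu}(\rho)}$. Your additional verification of the endpoint matching along a trail is a correct and slightly more explicit rendering of what the paper illustrates only through the examples of Figures \ref{FigRibbonGraphstr} and \ref{FigRibbonGraph}.
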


\begin{rem}
\label{RemTraceGFF}
The density \eqref{EqDensityU} can be factorized
by using the orthogonal decomposition
\begin{displaymath}
M=\frac{1}{n}\Tr(M)I_{n}+\Big(M-\frac{1}{n}\Tr(M)I_{n}\Big).
\end{displaymath}
We have that
\begin{displaymath}
\Tr(M(x)^{2})
=
\dfrac{1}{n}\Tr(M(x))^{2}
+
\Tr\Big(\Big(M(x)-\frac{1}{n}\Tr(M(x))I_{n}\Big)^{2}\Big),
\end{displaymath}
and
\begin{multline*}
\Tr((M(y)-U(y,x)M(x)U(x,y))^{2})
=
\dfrac{1}{n}(\Tr(M(y))-\Tr(M(x)))^{2}
\\+
\Tr\Big(\Big(M(y)-\dfrac{1}{n}\Tr(M(y))I_{n}
-U(y,x)\Big(M(x)-\dfrac{1}{n}\Tr(M(x))I_{n}\Big)U(x,y)\Big)^{2}\Big).
\end{multline*}
So, under $\langle\cdot\rangle_{\beta,n}^{U}$,
for $\beta\in\{1,2,4\}$,
the matrix-valued field
$\Phi-\frac{1}{n}\Tr(\Phi)I_{n}$
and the scalar field $\Tr(\Phi)$
are independent.
Moreover, the field
\begin{displaymath}
\Big(\dfrac{1}{\sqrt{n}}\Tr(\Phi(x))\Big)_{x\in V}=
\Big(\dfrac{1}{\sqrt{n}}\sum_{i}^{n}\lambda_{i}(x)\Big)_{x\in V}
\end{displaymath}
is distributed as a scalar GFF \eqref{EqDensityGFF},
and in particular its law is the same whatever the connection $U$.
The latter point can also be seen through the covariance
structure of $\Tr(\Phi)$.
In the expression of two-point correlations
\begin{displaymath}
\langle \Tr(\Phi(x))\Tr(\Phi(y))\rangle_{\beta,n}^{U}
\end{displaymath}
appear only ribbon graphs with one boundary component
$(f_{\nu=(1,1)}(\rho)=1)$. 
Thus, the two-point correlations are expressed with
\begin{displaymath}
\Tr(\mathsf{hol}^{U}(\gamma)\mathsf{hol}^{U}(\gamma)^{\ast})
\mu^{x,y}(d\gamma)=n\mu^{x,y}(d\gamma).
\end{displaymath}
Hence,
\begin{displaymath}
\langle \Tr(\Phi(x))\Tr(\Phi(y))\rangle_{\beta,n}^{U}
= n G(x,y).
\end{displaymath}
\end{rem}

\section{Proofs}
\label{SecProofs}

\subsection{Proof of Theorem \ref{ThmTopoExpA}}
\label{SecThm2}

We give the proof of Theorem \ref{ThmTopoExpA}, which contains 
Theorem \ref{ThmTopoExpField} as a special case.
A possible approach would be to expand the product of traces
on the left-hand side of the isomorphism identity, apply Theorem
\ref{ThmIsoDynkin}, and then recombine the terms to get the right-hand side of the isomorphism identity. This works well for
$\beta\in\{1,2\}$ (see \cite{Zvonkin97MatrixIntegrals} for one matrix integrals), but for $\beta=4$ this approach is less tractable because of the non-commutativity of quaternions. 
Instead, we will rely on an induction over the numbers of edges
$\vert\nu\vert/2$, 
similar to that in \cite{BrycPierce09GSE}.

We use the notations of Section \ref{SubSecMatrixGFF}.
For $\beta\in\{1,2,4\}$, let $(M^{(i)})_{i\geq 1}$ be an i.i.d. sequence of G$\beta$E(n) matrices with distribution
\eqref{EqGbetaEmat}. Given
$\textbf{p}=\{\{a_{1},b_{1}\},\dots,
\{a_{\vert\nu\vert/2},b_{\vert\nu\vert/2}\}\}$ a partition of
$\{1,\dots, \vert\nu\vert\}$ in pairs
and $l\in \{1,\dots,m(\nu)\}$,
$\Pi_{\nu,l,\textbf{p}}(M,A)$ will denote the product of matrices
\begin{multline*}
\Pi_{\nu,l,\textbf{p}}(M,A)=
M^{(\I_{\nu_{1}+\dots +\nu_{l-1}+1})}
A(\nu_{1}+\dots +\nu_{l-1}+1,
\nu_{1}+\dots +\nu_{l-1}+2)
M^{(\I_{\nu_{1}+\dots +\nu_{l-1}+2})}
\\
\dots A(\nu_{1}+\dots +\nu_{l}-1,
\nu_{1}+\dots +\nu_{l})
M^{(\I_{\nu_{1}+\dots +\nu_{l}})}
A(\nu_{1}+\dots +\nu_{l},
\nu_{1}+\dots +\nu_{l-1}+1),
\end{multline*}
where,
for $k\in \{1,\dots ,\vert\nu\vert\}$,
we denote by $\I_{k}$ the unique index 
$i\in\{1,\dots ,\vert\nu\vert/2\}$ such that
$k\in \{a_{i},b_{i}\}$.
Note that how exactly the pairs $\{a_{i},b_{i}\}$ 
in the partition $\textbf{p}$ are ordered will not be important.
The product $\Pi_{\nu,l,\textbf{p}}(M,\widetilde{A})$ is defined similarly, with matrices
$\widetilde{A}(k,k')$ instead of
$A(k,k')$. Note that in the products
\begin{displaymath}
\prod_{l=1}^{m(\nu)}
\Tr\Big(\Pi_{\nu,l,\textbf{p}}(M,A)\Big)
\qquad
\text{and}
\qquad
\prod_{l=1}^{m(\nu)}
\Re\Big(\Tr\Big(\Pi_{\nu,l,\textbf{p}}(M,\widetilde{A})
\Big)\Big),
\end{displaymath}
each $M^{(i)}$ for $i\in\{1,\dots,\vert\nu\vert/2\}$ appears exactly twice, at positions corresponding to a pair in the partition $\textbf{p}$.

\begin{lemma}
\label{LemMi}
Then, for $\beta\in\{1,2\}$ and
$F$ a bounded measurable function 
$\mathbb{R}^{V} \rightarrow \mathbb{R}$, one has the following equality:
\begin{multline*}
\Big\langle \Big(\prod_{l=1}^{m(\nu)}
\Tr\Big(\Pi_{\nu,l}(\Phi,A)\Big)\Big)
F(\Tr(\Phi^{2})/2)\Big\rangle_{\beta,n} 
\\= 
\sum_{\substack{\textbf{p} \text{ partition}
\\\text{of } \{1,\dots, \vert\nu\vert\}
\\\text{in pairs }
}}
\int\displaylimits_{\gamma_{1},\dots \gamma_{\vert\nu\vert /2}}
\Big\langle \prod_{l=1}^{m(\nu)}
\Tr\Big(\Pi_{\nu,l,\textbf{p}}(M,A)\Big)
\Big\rangle_{\beta,n}
\\
\times
\Big\langle
F\big(\Tr(\Phi^{2})/2+L(\gamma_{1})+\dots + L(\gamma_{\vert\nu\vert /2})\big)
\Big\rangle_{\beta,n}
\mu^{x_{1},x_{2},\dots, x_{\vert\nu\vert}}_{\textbf{p}}
(d\gamma_{1},\dots,d\gamma_{\vert\nu\vert /2}).
\end{multline*}
For $\beta=4$,
\begin{multline*}
\Big\langle \Big(\prod_{l=1}^{m(\nu)}
\Re\Big(\Tr\Big(\Pi_{\nu,l}(\Phi,\widetilde{A})\Big)\Big)\Big)
F(\Tr(\Phi^{2})/2)\Big\rangle_{\beta=4,n} 
\\= 
\sum_{\substack{\textbf{p} \text{ partition}
\\\text{of } \{1,\dots, \vert\nu\vert\}
\\\text{in pairs }
}}
\int\displaylimits_{\gamma_{1},\dots \gamma_{\vert\nu\vert /2}}
\Big\langle \prod_{l=1}^{m(\nu)}
\Re\Big(\Tr\Big(\Pi_{\nu,l,\textbf{p}}(M,\widetilde{A})
\Big)\Big)
\Big\rangle_{\beta=4,n}
\\
\times
\Big\langle
F\big(\Tr(\Phi^{2})/2+L(\gamma_{1})+\dots + L(\gamma_{\vert\nu\vert /2})\big)
\Big\rangle_{\beta=4,n}
\mu^{x_{1},x_{2},\dots, x_{\vert\nu\vert}}_{\textbf{p}}
(d\gamma_{1},\dots,d\gamma_{\vert\nu\vert /2}).
\end{multline*}
\end{lemma}

\begin{proof}
On the left-hand side one can expand the product of traces
\begin{displaymath}
\prod_{l=1}^{m(\nu)}
\Tr\Big(\Pi_{\nu,l}(\Phi,A)\Big)
\qquad
\text{and}
\qquad
\prod_{l=1}^{m(\nu)}
\Re\Big(\Tr\Big(\Pi_{\nu,l}(\Phi,\widetilde{A})\Big)\Big).
\end{displaymath}
 Then, one applies to each term in the sum Lemma
 \ref{LemIsoKLrew} (in the particular case
 $U\equiv I_{n}$), so as to make appear the entries of
 G$\beta$E$(n)$ matrices $M^{(i)}$. 
 Then one factorizes these entries into
 \begin{displaymath}
\prod_{l=1}^{m(\nu)}
\Tr\Big(\Pi_{\nu,l,\textbf{p}}(M,A)\Big)
\qquad
\text{and}
\qquad
\prod_{l=1}^{m(\nu)}
\Re\Big(\Tr\Big(\Pi_{\nu,l,\textbf{p}}(M,\widetilde{A})
\Big)\Big).
\qedhere
\end{displaymath}
\end{proof}

It remains to compute the moments
\begin{displaymath}
\langle\prod_{l=1}^{m(\nu)}
\Tr\Big(\Pi_{\nu,l,\textbf{p}}(M,A)\Big)
\Big\rangle_{\beta,n}
\qquad
\text{and}
\qquad
\Big\langle \prod_{l=1}^{m(\nu)}
\Re\Big(\Tr\Big(\Pi_{\nu,l,\textbf{p}}(M,\widetilde{A})
\Big)\Big)
\Big\rangle_{\beta=4,n}.
\end{displaymath}
for $\vert\nu\vert/2$ i.i.d. 
G$\beta$E$(n)$ matrices. For $\beta=4$ we will use expressions of moments of quaternionic Gaussian r.v. that appeared in
\cite{BrycPierce09GSE}.

\begin{lemma}[Bryc-Pierce \cite{BrycPierce09GSE}]
\label{LemQuaternionWick}
Let $\xi\in\mathbb{H}$ be a quaternionic Gaussian r.v.
with distribution
\begin{equation}
\label{EqQuatGauss}
\dfrac{1}{(2\pi)^{2}}e^{-\frac{1}{2}\vert q\vert^{2}} 
dq_{r} dq_{i} dq_{j} dq_{k}.
\end{equation}
Then, for any $q_{1},q_{2}\in\mathbb{H}$,
\begin{displaymath}
\E[\Re(\xi q_{1} \bar{\xi} q_{2})]=
4\Re(q_{1})\Re(q_{2}),
\qquad
\E[\Re(\xi q_{1} \xi q_{2})]=
-2\Re(q_{1}\overline{q_{2}}),
\end{displaymath}
\begin{displaymath}
\E[\Re(\xi q_{1}) \Re(\bar{\xi} q_{2})]=
\Re(q_{1}q_{2}),
\qquad
\E[\Re(\xi q_{1}) \Re(\xi q_{2})]=
\Re(q_{1}\overline{q_{2}}).
\end{displaymath}
\end{lemma}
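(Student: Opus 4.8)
The plan is to use that $\xi=\xi_{r}+\xi_{i}\mathbf{i}+\xi_{j}\mathbf{j}+\xi_{k}\mathbf{k}$ has i.i.d.\ $\mathcal{N}(0,1)$ real coordinates, so $\E[\xi_{a}\xi_{b}]=\delta_{ab}$ and expectations of products of an odd number of coordinates vanish, and that the law of $\xi$ is invariant under the conjugation $\xi\mapsto u\xi u^{-1}$ by any unit quaternion $u$, since this acts on $(\xi_{i},\xi_{j},\xi_{k})$ by an element of $SO(3)$ while fixing $\xi_{r}$. Each of the four expressions is $\R$-bilinear in $(q_{1},q_{2})$ — using $\Re(q)=\tfrac12\Tr(\mathtt{C}(q))$, the fact that $\mathtt{C}$ is a ring morphism, and cyclicity of the trace — and so are the four right-hand sides; hence it suffices to check each identity for $q_{1},q_{2}$ ranging over the basis $\{1,\mathbf{i},\mathbf{j},\mathbf{k}\}$, and the $S_{3}$-symmetry permuting $\mathbf{i},\mathbf{j},\mathbf{k}$ (which preserves the law of $\xi$) reduces this to a short list of cases.

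For the last two identities the computation is immediate once one notes $\Re(\xi q)=\langle\xi,\bar q\rangle$, where $\langle\cdot,\cdot\rangle$ is the Euclidean inner product on $\mathbb{H}\cong\R^{4}$ (equivalently $\langle a,b\rangle=\Re(a\bar b)$); then $\E[\Re(\xi q_{1})\Re(\bar\xi q_{2})]=\E[\langle\xi,\bar q_{1}\rangle\langle\xi,q_{2}\rangle]=\langle\bar q_{1},q_{2}\rangle=\Re(q_{1}q_{2})$ and $\E[\Re(\xi q_{1})\Re(\xi q_{2})]=\langle\bar q_{1},\bar q_{2}\rangle=\langle q_{1},q_{2}\rangle=\Re(q_{1}\overline{q_{2}})$, using that $q\mapsto\bar q$ is an isometry of $\R^{4}$ and that $\Re(\overline{q_{1}q_{2}})=\Re(q_{1}q_{2})$. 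For the first two identities I would argue via Schur's lemma: conjugation invariance together with cyclicity shows that $q\mapsto\E[\xi q\bar\xi]$ and $q\mapsto\E[\xi q\xi]$ are $SO(3)$-equivariant linear endomorphisms of $\mathbb{H}$, and since $\mathbb{H}=\R\oplus\operatorname{Im}\mathbb{H}$ decomposes into the trivial and the standard representation of $SO(3)$, such a map is a linear combination of the identity and $q\mapsto\bar q$ (equivalently of the identity and $q\mapsto\Re(q)$). The two scalars are pinned down by evaluating at $q=1$ and $q=\mathbf{i}$: one gets $\E[\xi\bar\xi]=\E[|\xi|^{2}]=4$ and, since $\xi\mathbf{i}\bar\xi$ is purely imaginary with vanishing $\mathbf{i}$-component in expectation, $\E[\xi q\bar\xi]=4\Re(q)$; likewise $\Re(\xi^{2})=\xi_{r}^{2}-\xi_{i}^{2}-\xi_{j}^{2}-\xi_{k}^{2}$ has mean $-2$ and the $\mathbf{i}$-computation gives $\E[\xi q\xi]=-2\bar q$. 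Substituting into $\E[\Re(\xi q_{1}\bar\xi q_{2})]=\Re(\E[\xi q_{1}\bar\xi]\,q_{2})$ and $\E[\Re(\xi q_{1}\xi q_{2})]=\Re(\E[\xi q_{1}\xi]\,q_{2})$, and using $\Re(ab)=\Re(ba)$ and $\Re(\bar c)=\Re(c)$, yields $4\Re(q_{1})\Re(q_{2})$ and $-2\Re(q_{1}\overline{q_{2}})$.

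The only genuinely delicate point is the bookkeeping in the two base-case products $\xi\mathbf{i}\bar\xi$ and $\xi\mathbf{i}\xi$: one must keep track of signs in the multiplication table (e.g.\ $\mathbf{j}\mathbf{i}=-\mathbf{k}$, $\mathbf{k}\mathbf{i}=\mathbf{j}$) and of the order reversal in $\overline{q_{1}q_{2}}=\bar q_{2}\bar q_{1}$; everything else is a symmetry reduction or a second-moment evaluation with $\E[\xi_{a}\xi_{b}]=\delta_{ab}$. Alternatively, one can bypass Schur's lemma entirely and simply expand all sixteen basis choices of $(q_{1},q_{2})$ directly for each identity; this is longer but completely mechanical, and the $S_{3}$-symmetry keeps the number of essentially distinct cases small.
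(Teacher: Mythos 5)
Your proof is correct. Note first that the paper itself gives no proof of this lemma: it is quoted from Bryc--Pierce \cite{BrycPierce09GSE} and used as a black box, so there is no internal argument to compare against. Your self-contained derivation checks out at every step. The last two identities are exactly the observation $\Re(\xi q)=\langle \xi,\bar q\rangle$ combined with $\E[\xi_a\xi_b]=\delta_{ab}$ and the identities $\langle\bar q_1,q_2\rangle=\Re(q_1q_2)$, $\langle q_1,q_2\rangle=\Re(q_1\bar q_2)$. For the first two, the equivariance $T(uqu^{-1})=uT(q)u^{-1}$ for $T(q)=\E[\xi q\bar\xi]$ (resp.\ $\E[\xi q\xi]$) does follow from the rotation invariance of the law of $\xi$ under $\xi\mapsto u\xi u^{-1}$, and Schur's lemma applied to $\mathbb H=\R\oplus\operatorname{Im}\mathbb H$ (trivial plus standard representation of $SO(3)$, both absolutely irreducible over $\R$) legitimately reduces everything to the two evaluations at $q=1$ and $q=\mathbf i$. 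I verified those: $\E[\xi\bar\xi]=4$ and the $\mathbf i$-component of $\xi\mathbf i\bar\xi$ is $\xi_r^2+\xi_i^2-\xi_j^2-\xi_k^2$, of mean $0$, giving $\E[\xi q\bar\xi]=4\Re(q)$; and $\E[\xi^2]=-2$ while the $\mathbf i$-component of $\xi\mathbf i\xi$ is $\xi_r^2-\xi_i^2+\xi_j^2+\xi_k^2$, of mean $2$, giving $\E[\xi q\xi]=-2\bar q$. The final passage from $\E[\xi q_1\xi]=-2\bar q_1$ to $-2\Re(q_1\bar q_2)$ uses $\Re(\bar q_1q_2)=\Re(\overline{\bar q_1 q_2})=\Re(\bar q_2 q_1)=\Re(q_1\bar q_2)$, which you correctly flag as the one spot where the order reversal under conjugation must be tracked. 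Compared with simply citing the reference, your argument has the advantage of making visible why the ``twisted'' pairing $\E[\Re(\xi q_1\xi q_2)]$ carries the sign and the conjugate that are ultimately responsible for the factor $(-2)^{\chi_\nu(\rho)}$ in the weights $w_{\nu,\beta=4}$.
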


\begin{lemma}
\label{LemRecursion}
Let $M$ be a random G$\beta$E$(n)$ matrix
\eqref{EqGbetaEmat}, $\beta\in\{1,2,4\}$.
Let $B, C$ be two deterministic matrices in
$\mathcal{M}_{n}(\mathbb{C})$, and 
$\widetilde{B},\widetilde{C}$ two deterministic
matrices in $\mathcal{M}_{n}(\mathbb{H})$. Then
\begin{eqnarray}
\label{EqR1}
\langle\Tr(M B M C)\rangle_{\beta=1,n}&=&
\dfrac{1}{2}\Tr(B)\Tr(C)+
\dfrac{1}{2}\Tr(B C^{\mathsf{T}}),
\\
\label{EqR2}
\langle\Tr(M B)\Tr(M C)\rangle_{\beta=1,n}&=&
\dfrac{1}{2}\Tr(B C)+
\dfrac{1}{2}\Tr(B C^{\mathsf{T}}),
\\
\label{EqR3}
\langle\Tr(M B M C)\rangle_{\beta=2,n} &=&
\Tr(B)\Tr(C),
\\
\label{EqR4}
\langle\Tr(M B)\Tr(M C)\rangle_{\beta=2,n} &=&
\Tr(B C),
\\
\label{EqR5}
\langle\Re(\Tr(M \widetilde{B} 
M \widetilde{C}))\rangle_{\beta=4,n} &=&
2\Re(\Tr(\widetilde{B}))\Re(\Tr(\widetilde{C}))-
\Re(\Tr(\widetilde{B}\widetilde{C}^{\ast})),
\\
\label{EqR6}
\langle\Re(\Tr(M \widetilde{B}))
\Re(\Tr(M \widetilde{C}))\rangle_{\beta=4,n} &=&
\dfrac{1}{2}\Re(\Tr(\widetilde{B}\widetilde{C}))+
\dfrac{1}{2}\Re(\Tr(\widetilde{B}\widetilde{C}^{\ast})).
\end{eqnarray}
\end{lemma}

\begin{proof}
If $\beta\in\{1,2\}$, the identities follow from the second moments of the entries:
\begin{displaymath}
\forall i\in \{1,\dots,n\}, 
\langle M_{ii}^{2}\rangle_{\beta=1,n} =1
=\dfrac{1}{2}+\dfrac{1}{2},
~~
\forall i\neq j\in \{1,\dots,n\}, 
\langle M_{ij}^{2}\rangle_{\beta=1,n} =
\langle M_{ij}M_{ji}\rangle_{\beta=1,n}
=\dfrac{1}{2},
\end{displaymath}
\begin{displaymath}
\forall i\in \{1,\dots,n\}, 
\langle M_{ii}^{2}\rangle_{\beta=2,n} =1,
\qquad
\forall i\neq j\in \{1,\dots,n\}, 
\langle M_{ij}M_{ji}\rangle_{\beta=2,n}
=1,
\end{displaymath}
all other second moments being zero.

If $\beta=4$, the diagonal entries are still real and commute with quaternions, so for all
$i\in \{1,\dots,n\}$,
\begin{eqnarray*}
\langle \Re(\widetilde{B}_{ii}M_{ii}
\widetilde{C}_{ii}M_{ii})\rangle_{\beta=4,n}&=&
\Re(\widetilde{B}_{ii}\widetilde{C}_{ii})
\\&=&
2\Re(\widetilde{B}_{ii})\Re(\widetilde{C}_{ii})
-
\Re\Big(\widetilde{B}_{ii}
\overline{\widetilde{C}_{ii}}\Big),
\end{eqnarray*}
\begin{eqnarray*}
\langle \Re(\widetilde{B}_{ii}M_{ii})
\Re(\widetilde{C}_{ii}M_{ii})\rangle_{\beta=4,n}&=&
\Re(\widetilde{B}_{ii})\Re(\widetilde{C}_{ii})
\\&=&
\dfrac{1}{2}\Re(\widetilde{B}_{ii}\widetilde{C}_{ii})
+
\dfrac{1}{2}\Re\Big(\widetilde{B}_{ii}
\overline{\widetilde{C}_{ii}}\Big).
\end{eqnarray*}
For the offdiagonal entries, 
$\sqrt{2}M_{ij}$ is distributed according to
\eqref{EqQuatGauss}, so we apply Lemma \ref{LemQuaternionWick}.
For all $i\neq j\in \{1,\dots,n\}$,
\begin{eqnarray*}
\langle \Re(\widetilde{B}_{ii}M_{ij}
\widetilde{C}_{jj}M_{ji})\rangle_{\beta=4,n}&=&
2\Re(\widetilde{B}_{ii})\Re(\widetilde{C}_{jj}),
\\
\langle \Re(\widetilde{B}_{ji}M_{ij}
\widetilde{C}_{ji}M_{ij})\rangle_{\beta=4,n}&=&
-\Re\Big(\widetilde{B}_{ji}
\overline{\widetilde{C}_{ji}}\Big),
\end{eqnarray*}
\begin{eqnarray*}
\langle \Re(\widetilde{B}_{ji}M_{ij})
\Re(\widetilde{C}_{ij}M_{ji})\rangle_{\beta=4,n}&=&
\dfrac{1}{2}
\Re(\widetilde{B}_{ji}\widetilde{C}_{ij}),
\\
\langle \Re(\widetilde{B}_{ji}M_{ij})
\Re(\widetilde{C}_{ji}M_{ij})\rangle_{\beta=4,n}&=&
\dfrac{1}{2}\Re\Big(\widetilde{B}_{ji}
\overline{\widetilde{C}_{ji}}\Big).
\end{eqnarray*}
All other second moments are zero.
\end{proof}

The following lemma will conclude the proof of
Theorem \ref{ThmTopoExpA}.

\begin{lemma}
\label{LemTransfo}
For $\beta\in\{1,2\}$, and $\textbf{p}$ a partition in pairs of
$\{1,\dots,\vert\nu\vert\}$,
\begin{displaymath}
\Big\langle \prod_{l=1}^{m(\nu)}
\Tr\Big(\Pi_{\nu,l,\textbf{p}}(M,A)\Big)
\Big\rangle_{\beta,n}=
\sum_{\rho\in \mathcal{R}_{\nu,\textbf{p}}}
w_{\nu,\beta}(\rho)
\prod_{\mathtt{t}\in\mathcal{T}_{\nu}(\rho)}
\Tr(\Pi_{\mathtt{t}}(A)).
\end{displaymath}
For $\beta=4$,
\begin{displaymath}
\Big\langle \prod_{l=1}^{m(\nu)}
\Re\Big(\Tr\Big(\Pi_{\nu,l,\textbf{p}}(M,\widetilde{A})
\Big)\Big)
\Big\rangle_{\beta=4,n}
=
\sum_{\rho\in \mathcal{R}_{\nu,\textbf{p}}}
w_{\nu,\beta=4}(\rho)
\prod_{\mathtt{t}\in\mathcal{T}_{\nu}(\rho)}
\Re(\Tr(\widetilde{\Pi}_{\mathtt{t}}(\widetilde{A}))).
\end{displaymath}
\end{lemma}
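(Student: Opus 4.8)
The plan is to prove Lemma \ref{LemTransfo} by induction on the number of ribbon edges $E=\vert\nu\vert/2$, using Lemma \ref{LemRecursion} (equivalently, Gaussian integration by parts against one G$\beta$E$(n)$ matrix) as the elementary step, in the spirit of \cite{BrycPierce09GSE}. For the base case $E=1$ one has $\vert\nu\vert=2$, so $\nu=(2)$ or $\nu=(1,1)$ and $\textbf{p}=\{\{1,2\}\}$; then $\Pi_{\nu,1,\textbf{p}}(M,A)=M^{(1)}A(1,2)M^{(1)}A(2,1)$ in the first case and $\Pi_{\nu,1,\textbf{p}}(M,A)=M^{(1)}A(1,1)$, $\Pi_{\nu,2,\textbf{p}}(M,A)=M^{(1)}A(2,2)$ in the second. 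Lemma \ref{LemRecursion} evaluates the corresponding moment as a sum of exactly $\operatorname{Card}(\mathcal{R}_{\nu,\textbf{p}})=2$ terms (one of which vanishes for $\beta=2$), with coefficients $\tfrac12,\tfrac12$ for $\beta=1$, $1,0$ for $\beta=2$, and $2,-1$ (one trace) or $\tfrac12,\tfrac12$ (two traces) for $\beta=4$; one checks by inspection that these match the straight and the twisted pairing in $\mathcal{R}_{\nu,\textbf{p}}$, the straight one carrying $w_{\nu,\beta}$ and two trails producing $\Tr(A(1,2))\Tr(A(2,1))$ (resp. $\Tr(A(1,1)A(2,2))$), the twisted one carrying $w_{\nu,\beta}$ and one trail producing $\Tr(A(1,2)A(2,1)^{\mathsf{T}})$ (resp. $\Tr(A(1,1)A(2,2)^{\mathsf{T}})$, with $^{\ast}$ for $\beta=4$). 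Here I use the rewriting $w_{\nu,4}(\rho)=(-1)^{\chi_\nu(\rho)}2^{f_\nu(\rho)-m(\nu)}$, which follows from Euler's formula $\chi_\nu(\rho)=m(\nu)-\vert\nu\vert/2+f_\nu(\rho)$ and the definition of $w_{\nu,4}$.

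For the inductive step, assume the statement for $E-1$ edges and distinguish the edge of $\textbf{p}$ containing the half-edge labelled $1$, say the pair $\{1,k\}$, which carries the matrix $M^{(1)}$. Since $M^{(1)}$ is independent of $M^{(2)},\dots,M^{(E)}$ and occurs exactly twice in $\prod_l\Tr(\Pi_{\nu,l,\textbf{p}}(M,A))$, I write $\langle\cdot\rangle_{\beta,n}=\langle\langle\cdot\rangle_{M^{(1)}}\rangle_{M^{(2)},\dots,M^{(E)}}$ and compute the inner expectation. If half-edges $1$ and $k$ lie on the same vertex $l$, both copies of $M^{(1)}$ sit inside $\Tr(\Pi_{\nu,l,\textbf{p}})$; by cyclicity of the trace I rewrite $\Pi_{\nu,l,\textbf{p}}(M,A)$ as $M^{(1)}A^{(1)}M^{(1)}A^{(2)}$, where $A^{(1)}$ is the (frozen, possibly random) product of the matrices strictly between the two copies of $M^{(1)}$ and $A^{(2)}$ the product of the remaining factors, and apply the one-trace identity of Lemma \ref{LemRecursion}. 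If $1$ and $k$ lie on distinct vertices $l,l'$, the two copies sit in the separate factors $\Tr(M^{(1)}A^{(1)})$ and $\Tr(M^{(1)}A^{(2)})$, and I apply the two-trace identity. In all cases each resulting term is, up to relabelling, a moment of the same shape with $E-1$ edges, $\langle\prod_l\Tr(\Pi_{\nu',\cdot,\textbf{p}'}(M,A'))\rangle_{\beta,n}$, where $\nu'$, $\textbf{p}'$ and the family $A'$ are read off explicitly: removing the half-edges $1$ and $k$ splits vertex $l$ into two (the one-trace "straight" term), keeps it with $A^{(2)}$ transposed/adjointed (the one-trace "twisted" term), or merges $l$ with $l'$ (the two-trace terms), while the surviving matrices $A'$ are obtained by absorbing $A^{(1)}$ and $A^{(2)}$ into their neighbours. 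Applying the induction hypothesis gives a sum over $\rho'\in\mathcal{R}_{\nu',\textbf{p}'}$, and it remains to reassemble this as $\sum_{\rho\in\mathcal{R}_{\nu,\textbf{p}}}w_{\nu,\beta}(\rho)\prod_{\mathtt{t}\in\mathcal{T}_\nu(\rho)}\Tr(\Pi_{\mathtt{t}}(A))$ (resp. with $\Re\Tr$ and $\widetilde{\Pi}$ for $\beta=4$).

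The bookkeeping for that reassembly is as follows. The distinguished edge of a $\rho\in\mathcal{R}_{\nu,\textbf{p}}$ is straight or twisted, and forgetting it together with the two half-edges $1,k$ is a bijection of $\mathcal{R}_{\nu,\textbf{p}}$ onto the disjoint union, over the gluing type, of the sets $\mathcal{R}_{\nu',\textbf{p}'}$ arising above. Under this bijection I must check: (i) the border cycles of $\rho$ come from those of $\rho'$ by splitting one cycle into two (the term producing $\Tr(A^{(1)})\Tr(A^{(2)})$) or merging two into one (the terms producing $\Tr(A^{(1)}A^{(2)})$, $\Tr(A^{(1)}A^{(2)\mathsf{T}})$, etc.), the special rule for vertices with a single outgoing half-edge being invoked when a vertex loses half-edges; (ii) for each trail, $\Tr(\Pi_{\mathtt{t}}(A))=\Tr(\Pi_{\mathtt{t}'}(A'))$, because absorbing $A^{(1)},A^{(2)}$ into neighbouring matrices is exactly the operation "$\doteq$-step across the removed edge $\leadsto$ $\rightarrow$/$\leftarrow$-turns at the (possibly new) vertex"; (iii) the weights match, $w_{\nu,\beta}(\rho)$ equals the Lemma \ref{LemRecursion} coefficient of the relevant term times $w_{\nu',\beta}(\rho')$. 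Item (iii) is then routine: for $\beta=1$ it is $w_{\nu,1}(\rho)=2^{-E}=\tfrac12\cdot 2^{-(E-1)}$; for $\beta=2$ the twisted term has coefficient $0$ matching $w_{\nu,2}=0$, and the straight term has coefficient $1$ with $w_{\nu,2}(\rho)=1=w_{\nu',2}(\rho')$; for $\beta=4$, using $w_{\nu,4}(\rho)=(-1)^{\chi_\nu(\rho)}2^{f_\nu(\rho)-m(\nu)}$ and the fact that removing the distinguished edge changes $(m,\vert\nu\vert/2,f)$ by $(+1,-1,0)$, $(0,-1,0)$ and $(-1,-1,0)$ in the one-trace straight, one-trace twisted and two-trace cases respectively, the ratio $w_{\nu,4}(\rho)/w_{\nu',4}(\rho')$ becomes exactly $2$, $-1$ and $\tfrac12$.

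The main obstacle is items (i)–(ii): verifying that edge removal acts on the set of border cycles and on the trail words — including the clockwise/counterclockwise turn symbols $\rightarrow,\leftarrow$ and the induced $^{\mathsf{T}}$/$^{\ast}$ transpositions — precisely as the trace identities of Lemma \ref{LemRecursion} dictate. This is a careful but elementary inspection of the local picture near the two half-edges being glued, and it is where the distinction between $\beta=2$ (oriented trails, no twisted edges surviving) and $\beta\in\{1,4\}$ (unoriented trails, transposes and quaternionic adjoints) must be handled; the $\beta=4$ case additionally requires keeping track of the $\Re\Tr$ structure throughout, for which Lemma \ref{LemRecursion} (via Lemma \ref{LemQuaternionWick}) is exactly tailored.
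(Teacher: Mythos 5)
Your proposal is correct and follows essentially the same route as the paper: induction on the number of ribbon edges $\vert\nu\vert/2$, with Lemma \ref{LemRecursion} as the base case and as the conditioning step (the paper conditions on $M^{(\vert\nu\vert/2)}$ rather than $M^{(1)}$, which is immaterial), followed by contraction of the distinguished ribbon edge and a weight comparison between $w_{\nu,\beta}(\rho)$ and the contracted pairing's weight. Your explicit verification of the $\beta=4$ weight ratios via $w_{\nu,4}(\rho)=(-1)^{\chi_{\nu}(\rho)}2^{f_{\nu}(\rho)-m(\nu)}$ is sound, and the border-cycle/trail bookkeeping you flag as the remaining obstacle is exactly the part the paper itself delegates to the recurrence in Bryc--Pierce.
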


\begin{proof}
The proof can be done by induction on
$\vert\nu\vert/2$. If $\vert\nu\vert/2=1$, the identities above are exactly those of Lemma \ref{LemRecursion}.

If $\vert\nu\vert/2>1$, we fix 
$\textbf{p}=\{\{a_{1},b_{1}\},\dots,
\{a_{\vert\nu\vert/2},b_{\vert\nu\vert/2}\}\}$ a partition of
$\{1,\dots, \vert\nu\vert\}$ in pairs.
One can take conditional expectations with respect to 
$M^{(\vert\nu\vert/2)}$ that appears at positions
$a_{\vert\nu\vert/2}$ and $b_{\vert\nu\vert/2}$
(i.e. conditioning on 
$(M^{(1)},\dots,M^{(\vert\nu\vert/2 -1)})$). 
For that, one again applies Lemma \ref{LemRecursion}.
$\widecheck{\textbf{p}}$ will denote the partition of
$\{1,\dots, \vert\nu\vert\}\setminus
\{a_{\vert\nu\vert/2},b_{\vert\nu\vert/2}\}$ 
induced by $\textbf{p}$.
To construct a ribbon pairing $\rho\in\mathcal{R}_{\nu,\textbf{p}}$ one can proceed as follows.
\begin{enumerate}
\item First pair the ribbon half-edges
$a_{\vert\nu\vert/2}$ and 
$b_{\vert\nu\vert/2}$ in an either straight or twisted way.
\item Then contract the ribbon edge
$\{a_{\vert\nu\vert/2},b_{\vert\nu\vert/2}\}$. After this operation one gets a new family of positive integers,
$\widecheck{\nu}_{\rm str}$ if the edge 
$\{a_{\vert\nu\vert/2},b_{\vert\nu\vert/2}\}$ was straight, or
$\widecheck{\nu}_{\rm tw}$ if the edge 
$\{a_{\vert\nu\vert/2},b_{\vert\nu\vert/2}\}$ was twisted, 
and in both cases
$\vert \widecheck{\nu}_{\rm str}\vert=
\vert \widecheck{\nu}_{\rm tw}\vert=\vert\nu\vert -2$.
\item Finally take a ribbon pairing 
$\widecheck{\rho}$ in 
$\mathcal{R}_{\widecheck{\nu}_{\rm str},\widecheck{\textbf{p}}}$,
respectively
$\mathcal{R}_{\widecheck{\nu}_{\rm tw},\widecheck{\textbf{p}}}$.
\end{enumerate}
One can show that the steps (1) and (2) above correspond to taking the conditional expectation with respect to the law
$M^{(\vert\nu\vert/2)}$, and the step (3) to further taking the expectations with respect to the law of
$(M^{(1)},\dots,M^{(\vert\nu\vert/2 -1)})$. One needs to see how the weights
$w_{\nu,\beta}(\rho)$ and
$w_{\widecheck{\nu}_{\rm str},\beta}(\widecheck{\rho})$, respectively
$w_{\widecheck{\nu}_{\rm tw},\beta}(\widecheck{\rho})$, are related, with a particular attention to the case $\beta=4$. 

This edge contraction procedure appears in the proof of a quaternionic Wick formula in Bryc and Pierce \cite{BrycPierce09GSE}, Theorem 3.1. 
There the authors do in particular the verifications for the weights 
$w_{\nu,\beta=4}(\rho)$. 
Our setting is more general, and not limited to $\beta=4$, 
but the recurrence on ribbon graphs is the same. 
For completeness, we give the details in Appendix A.
\end{proof}

\begin{rem}
For $\beta\in\{1,2\}$ one can alternatively prove
Lemma \ref{LemTransfo} by expanding the product of traces,
applying Wick's rule to each scalar term and then recombining
the resulting terms.
If one wishes to proceed like that in the quaternionic case
$\beta=4$,
one needs the Wick formula \eqref{Eq quaternionic Wick trails}
for quaternionic Gaussian r.v.s; see Appendix B.
This formula is more general than the one given in 
\cite{BrycPierce09GSE}.
However, proceeding like that does not simplify the proof
of Lemma \ref{LemTransfo},
since the proof of the quaternionic 
Wick formula \eqref{Eq quaternionic Wick trails}
relies itself on an induction on ribbon edges.
Moreover, then one would need a particular treatment for
the diagonal coefficients of the matrices,
since those are real Gaussian r.v.s.
\end{rem}

\subsection{Proof of Theorem \ref{ThmTopoExpAU}}
\label{SecThm3}

We give the proof of Theorem \ref{ThmTopoExpAU}.
We use the notations of Section \ref{SubSecMatrixTwist}.
As in Section \ref{SecThm2},
for $\beta\in\{1,2,4\}$, $(M^{(i)})_{i\geq 1}$ is an i.i.d. sequence of G$\beta$E(n) matrices with distribution
\eqref{EqGbetaEmat}. 
Consider $x_{1},x_{2},\dots, x_{\vert\nu\vert}\in V$,
$\textbf{p}=\{\{a_{1},b_{1}\},\dots,
\{a_{\vert\nu\vert/2},b_{\vert\nu\vert/2}\}\}$ a partition of
$\{1,\dots, \vert\nu\vert\}$ in pairs,
with $a_{i}<b_{i}$,
and $\gamma_{1},\dots,\gamma_{\vert\nu\vert/2}$
nearest-neighbor paths on $\mathcal{G}$, with
$\gamma_{i}$ going from
$x_{a_{i}}$ to $x_{b_{i}}$.
For $l\in \{1,\dots,m(\nu)\}$,
$\Pi_{\nu,l,\textbf{p}}(M,U,A)
(\gamma_{1},\dots,\gamma_{\vert\nu\vert/2})$ 
will denote the following product of matrices.
\begin{multline*}
\Pi_{\nu,l,\textbf{p}}(M,U,A)
(\gamma_{1},\dots,\gamma_{\vert\nu\vert/2})=
\\\widehat{M}^{(\nu_{1}+\dots +\nu_{l-1}+1)}
A(\nu_{1}+\dots +\nu_{l-1}+1,
\nu_{1}+\dots +\nu_{l-1}+2)
\widehat{M}^{(\nu_{1}+\dots +\nu_{l-1}+2)}
\\
\dots A(\nu_{1}+\dots +\nu_{l}-1,
\nu_{1}+\dots +\nu_{l})
\widehat{M}^{(\nu_{1}+\dots +\nu_{l})}
A(\nu_{1}+\dots +\nu_{l},
\nu_{1}+\dots +\nu_{l-1}+1),
\end{multline*}
where the matrix $\widehat{M}^{(k)}$ is given by
\begin{displaymath}
\widehat{M}^{(k)} = 
\mathsf{hol}^{U}(\gamma_{i})M^{(i)}
\mathsf{hol}^{U}(\gamma_{i})^{\ast}
\text{ if } k=a_{i},
\qquad
\widehat{M}^{(k)} = M^{(i)}
\text{ if } k=b_{i}.
\end{displaymath}
The product 
$\Pi_{\nu,l,\textbf{p}}(M,U,\widetilde{A})
(\gamma_{1},\dots,\gamma_{\vert\nu\vert/2})$
is defined similarly, with matrices
$\widetilde{A}(k,k')$ instead of $A(k,k')$.

\begin{lemma}
\label{LemMiHol}
Then, for $\beta\in\{1,2\}$ and
$F$ a bounded measurable function 
$\mathbb{R}^{V} \rightarrow \mathbb{R}$, one has the following equality:
\begin{multline*}
\Big\langle \Big(\prod_{l=1}^{m(\nu)}
\Tr\Big(\Pi_{\nu,l}(\Phi,A)\Big)\Big)
F(\Tr(\Phi^{2})/2)\Big\rangle_{\beta,n}^{U}
\\= 
\sum_{\substack{\textbf{p} \text{ partition}
\\\text{of } \{1,\dots, \vert\nu\vert\}
\\\text{in pairs }
}}
\int\displaylimits_{\gamma_{1},\dots \gamma_{\vert\nu\vert /2}}
\Big\langle \prod_{l=1}^{m(\nu)}
\Tr\Big(
\Pi_{\nu,l,\textbf{p}}(M,U,A)
(\gamma_{1},\dots,\gamma_{\vert\nu\vert/2})
\Big)
\Big\rangle_{\beta,n}
\\
\times
\Big\langle
F\big(\Tr(\Phi^{2})/2+L(\gamma_{1})+\dots + L(\gamma_{\vert\nu\vert /2})\big)
\Big\rangle_{\beta,n}
\mu^{x_{1},x_{2},\dots, x_{\vert\nu\vert}}_{\textbf{p}}
(d\gamma_{1},\dots,d\gamma_{\vert\nu\vert /2}).
\end{multline*}
For $\beta=4$,
\begin{multline*}
\Big\langle \Big(\prod_{l=1}^{m(\nu)}
\Re\Big(\Tr\Big(\Pi_{\nu,l}(\Phi,\widetilde{A})\Big)\Big)\Big)
F(\Tr(\Phi^{2})/2)\Big\rangle_{\beta=4,n}^{U} 
\\= 
\sum_{\substack{\textbf{p} \text{ partition}
\\\text{of } \{1,\dots, \vert\nu\vert\}
\\\text{in pairs }
}}
\int\displaylimits_{\gamma_{1},\dots \gamma_{\vert\nu\vert /2}}
\Big\langle \prod_{l=1}^{m(\nu)}
\Re\Big(\Tr\Big(
\Pi_{\nu,l,\textbf{p}}(M,U,\widetilde{A})
(\gamma_{1},\dots,\gamma_{\vert\nu\vert/2})
\Big)\Big)
\Big\rangle_{\beta=4,n}
\\
\times
\Big\langle
F\big(\Tr(\Phi^{2})/2+L(\gamma_{1})+\dots + L(\gamma_{\vert\nu\vert /2})\big)
\Big\rangle_{\beta=4,n}
\mu^{x_{1},x_{2},\dots, x_{\vert\nu\vert}}_{\textbf{p}}
(d\gamma_{1},\dots,d\gamma_{\vert\nu\vert /2}).
\end{multline*}
\end{lemma}

\begin{proof}
This follows from Lemma \ref{LemIsoKLrew}. Indeed,
the action of $U(x,y)$ on
$\R^{n}$ ($\beta=1$), $\C^{n}$ ($\beta=2$),
respectively $\mathbb{H}^{n}$ ($\beta=4$), induces an action on
$E_{\beta,n}$ by conjugation:
\begin{displaymath}
M\mapsto U(x,y) M U(x,y)^{\ast}.
\end{displaymath}
$\U(x,y)$ will denote the corresponding linear orthogonal operator on $E_{\beta,n}$.
The holonomy of the connection $(\U(x,y))_{\{x,y\}\in E}$
along a path $\gamma$ is related to that of
$(U(x,y))_{\{x,y\}\in E}$ along $\gamma$ by
\begin{displaymath}
\mathsf{hol}^{\U}(\gamma)(M)=
\mathsf{hol}^{U}(\gamma) M \mathsf{hol}^{U}(\gamma)^{\ast}, 
~~ M\in E_{\beta,n}.
\end{displaymath}
So one applies Lemma \ref{LemIsoKLrew} to the connection
$(\U(x,y))_{\{x,y\}\in E}$.
\end{proof}

To finish the proof of Theorem \ref{ThmTopoExpAU}, one needs to check, for every fixed partition in pairs $\textbf{p}$, the following expressions of expectations with respect to the law of 
$(M^{(i)})_{1\leq i\leq\vert\nu\vert/2}$:
\begin{multline*}
\Big\langle \prod_{l=1}^{m(\nu)}
\Tr\Big(
\Pi_{\nu,l,\textbf{p}}(M,U,A)
(\gamma_{1},\dots,\gamma_{\vert\nu\vert/2})
\Big)
\Big\rangle_{\beta=1,n}\\=
\sum_{\rho\in \mathcal{R}_{\nu,\textbf{p}}}
w_{\nu,\beta=1}(\rho)
\prod_{\mathtt{t}\in\mathcal{T}_{\nu}(\rho)}
\Tr(
\Pi_{\mathtt{t}}(U,A)
(\gamma_{1},\dots,\gamma_{\vert\nu\vert/2})
),
\end{multline*}
\begin{multline*}
\Big\langle \prod_{l=1}^{m(\nu)}
\Tr\Big(
\Pi_{\nu,l,\textbf{p}}(M,U,A)
(\gamma_{1},\dots,\gamma_{\vert\nu\vert/2})
\Big)
\Big\rangle_{\beta=2,n}\\=
\sum_{\rho\in \mathcal{R}_{\nu,\textbf{p}}}
w_{\nu,\beta=2}(\rho)
\prod_{\scriptsize{\overrightarrow{\mathtt{t}}\in
\overrightarrow{\mathcal{T}}_{\nu}(\rho)}}
\Tr(
\Pi_{\scriptsize{\overrightarrow{\mathtt{t}}}}(U,A)
(\gamma_{1},\dots,\gamma_{\vert\nu\vert/2})
),
\end{multline*}
\begin{multline*}
\Big\langle \prod_{l=1}^{m(\nu)}
\Re\Big(\Tr\Big(
\Pi_{\nu,l,\textbf{p}}(M,U,\widetilde{A})
(\gamma_{1},\dots,\gamma_{\vert\nu\vert/2})
\Big)\Big)
\Big\rangle_{\beta=4,n}\\=
\sum_{\rho\in \mathcal{R}_{\nu,\textbf{p}}}
w_{\nu,\beta=4}(\rho)
\prod_{\mathtt{t}\in\mathcal{T}_{\nu}(\rho)}
\Re(\Tr(
\widetilde{\Pi}_{\mathtt{t}}(U,\widetilde{A})
(\gamma_{1},\dots,\gamma_{\vert\nu\vert/2})
)).
\end{multline*}
The identities above follow already from Lemma 
\ref{LemTransfo}. One has to apply the latter to matrices
$\mathcal{A}(k,k')$, respectively
$\widetilde{\mathcal{A}}(k,k')$, instead
of $A(k,k')$, respectively
$\widetilde{A}(k,k')$, where
$\mathcal{A}(k,k')$ and 
$\widetilde{\mathcal{A}}(k,k')$ are as follows:
\begin{displaymath}
\mathcal{A}(a_{i},a_{j})=
\mathsf{hol}^{U}(\gamma_{i})^{\ast}
A(a_{i},a_{j})
\mathsf{hol}^{U}(\gamma_{j}),
\qquad
\widetilde{\mathcal{A}}(a_{i},a_{j})=
\mathsf{hol}^{U}(\gamma_{i})^{\ast}\widetilde{A}(a_{i},a_{j})
\mathsf{hol}^{U}(\gamma_{j}),
\end{displaymath}
\begin{displaymath}
\mathcal{A}(a_{i},b_{j})=
\mathsf{hol}^{U}(\gamma_{i})^{\ast}A(a_{i},b_{j}),
\qquad
\widetilde{\mathcal{A}}(a_{i},b_{j})=
\mathsf{hol}^{U}(\gamma_{i})^{\ast}\widetilde{A}(a_{i},b_{j}),
\end{displaymath}
\begin{displaymath}
\mathcal{A}(b_{i},a_{j})=
A(b_{i},a_{j})
\mathsf{hol}^{U}(\gamma_{j}),
\qquad
\widetilde{\mathcal{A}}(b_{i},a_{j})=
\widetilde{A}(b_{i},a_{j})
\mathsf{hol}^{U}(\gamma_{j}),
\end{displaymath}
\begin{displaymath}
\mathcal{A}(b_{i},b_{j})=
A(b_{i},b_{j}),
\qquad
\widetilde{\mathcal{A}}(b_{i},b_{j})=
\widetilde{A}(b_{i},b_{j}).
\end{displaymath}
Then, for every $l\in\{1,\dots,m(\nu)\}$,
\begin{eqnarray*}
\Tr(\Pi_{\nu,l,\textbf{p}}(M,U,A)
(\gamma_{1},\dots,\gamma_{\vert\nu\vert/2}))&=&
\Tr(\Pi_{\nu,l,\textbf{p}}(M,\mathcal{A})))
,
\\
\Re(\Tr(\Pi_{\nu,l,\textbf{p}}(M,U,\widetilde{A})
(\gamma_{1},\dots,\gamma_{\vert\nu\vert/2})))&=&
\Re(\Tr(\Pi_{\nu,l,\textbf{p}}(M,\widetilde{\mathcal{A}}))).
\end{eqnarray*}
Further, if $\rho\in\mathcal{R}_{\nu,\textbf{p}}$ and
$\mathtt{t}\in\mathcal{T}_{\nu}(\rho)$,
\begin{eqnarray*}
\Tr(
\Pi_{\mathtt{t}}(U,A)
(\gamma_{1},\dots,\gamma_{\vert\nu\vert/2})
)&=&
\Tr(
\Pi_{\mathtt{t}}(\mathcal{A}))
),
\\
\Re(\Tr(
\widetilde{\Pi}_{\mathtt{t}}(U,\widetilde{A})
(\gamma_{1},\dots,\gamma_{\vert\nu\vert/2})
))&=&
\Re(\Tr(
\widetilde{\Pi}_{\mathtt{t}}(\widetilde{\mathcal{A}})
)).
\end{eqnarray*}
If $\rho$ is the only ribbon pairing in 
$\mathcal{R}_{\nu,\textbf{p}}$ with only straight edges
(the setting for $\beta=2$), if
$\overrightarrow{\mathtt{t}}\in
\overrightarrow{\mathcal{T}}_{\nu}(\rho)$ and
$\mathtt{t}$ is the corresponding unoriented trail, then
\begin{displaymath}
\Tr(
\Pi_{\scriptsize{\overrightarrow{\mathtt{t}}}}(U,A)
(\gamma_{1},\dots,\gamma_{\vert\nu\vert/2})
)=
\Tr(
\Pi_{\mathtt{t}}(\mathcal{A}))
).
\end{displaymath}

\section*{Appendix A: induction on ribbon edges}

Here we detail the induction on edges used in Lemma \ref{LemTransfo}.
We primarily distinguish the cases according to whether 
the ribbon half-edges $a_{\vert\nu\vert/2}$ and $b_{\vert\nu\vert/2}$
are adjacent to the same vertex (Figure \ref{FigSame})
or to two different vertices (Figure \ref{FigDiff}).
The case of one vertex corresponds to both occurrences of
$M^{(\vert\nu\vert/2)}$ being in the same trace.
This also corresponds to the left-hand side
in equations \eqref{EqR1}, \eqref{EqR3} and \eqref{EqR5}.
The case of two vertices corresponds to the two occurrences of
$M^{(\vert\nu\vert/2)}$ being in different traces.
This also corresponds to the left-hand side
in equations \eqref{EqR2}, \eqref{EqR4} and \eqref{EqR6}.

\begin{figure}[ht]
\centering
\includegraphics[scale=0.8]{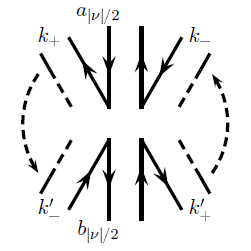}
\caption{The case when the ribbon half-edges
$a_{\vert\nu\vert/2}$ and $b_{\vert\nu\vert/2}$
are adjacent to the same vertex. 
Only the relevant vertex is represented.}
\label{FigSame}
\end{figure}

\begin{figure}[ht]
\centering
\includegraphics[scale=0.8]{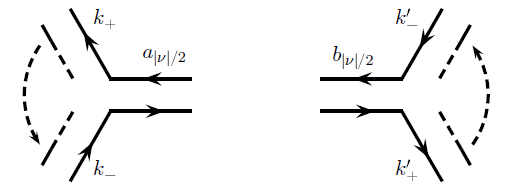}
\caption{The case when the ribbon half-edges
$a_{\vert\nu\vert/2}$ and $b_{\vert\nu\vert/2}$
are adjacent to two different vertices.
Only the two relevant vertices are represented.}
\label{FigDiff}
\end{figure}

We start with the case of one vertex.
Recall that $\rho$ denotes a ribbon pairing of
$\{1,\dots,\vert\nu\vert\}$
and $\widecheck{\rho}$ the induced ribbon pairing of
$\{1,\dots, \vert\nu\vert\}\setminus
\{a_{\vert\nu\vert/2},b_{\vert\nu\vert/2}\}$.
We further distinguish the following cases.
\begin{itemize}
\item The generic case, when the vertex has other ribbon half-edges
on both sides 
between $a_{\vert\nu\vert/2}$ and $b_{\vert\nu\vert/2}$.
Then, the straight pairing of 
$a_{\vert\nu\vert/2}$ and $b_{\vert\nu\vert/2}$ is represented on
Figure \ref{FigSameStr}.
After contracting the ribbon edge 
$\{a_{\vert\nu\vert/2},b_{\vert\nu\vert/2}\}$,
the vertex is split into two.
This corresponds to having a product of two traces in the first term on
the right-hand side 
in equations \eqref{EqR1}, \eqref{EqR3} and \eqref{EqR5}.
The orientations are preserved after contraction.
This corresponds to not having transposes or adjoints
in the first term on the right-hand side 
in equations \eqref{EqR1}, \eqref{EqR3} and \eqref{EqR5}.
Further, in case of a straight pairing, we have that
\begin{displaymath}
\vert\nu\vert
=
\vert \widecheck{\nu}_{\rm str}\vert + 2,
\qquad
m(\nu) = m(\widecheck{\nu}_{\rm str}) -1,
\qquad
f_{\nu}(\rho) =
f_{\widecheck{\nu}_{\rm str}}(\widecheck{\rho}),
\qquad
\chi_{\nu}(\rho)=
\chi_{\widecheck{\nu}_{\rm str}}(\widecheck{\rho}) -2.
\end{displaymath}
Thus,
\begin{displaymath}
w_{\nu,\beta=1}(\rho)=
\dfrac{1}{2}w_{\widecheck{\nu}_{\rm str},\beta=1}(\widecheck{\rho}),
\qquad
w_{\nu,\beta=2}(\rho)=
w_{\widecheck{\nu}_{\rm str},\beta=2}(\widecheck{\rho}),
\qquad
w_{\nu,\beta=4}(\rho)=
2 w_{\widecheck{\nu}_{\rm str},\beta=4}(\widecheck{\rho}).
\end{displaymath}
So indeed we get the coefficients in front of the first term
on the right-hand side 
in equations \eqref{EqR1}, \eqref{EqR3} and \eqref{EqR5}.

The twisted pairing of 
$a_{\vert\nu\vert/2}$ and $b_{\vert\nu\vert/2}$ is represented on
Figure \ref{FigSameTw}.
It can only occur for $\beta\in\{1,4\}$.
After contracting the ribbon edge 
$\{a_{\vert\nu\vert/2},b_{\vert\nu\vert/2}\}$,
the vertex is not divided.
This corresponds to having a single trace
in the second term on the right-hand side 
in equations \eqref{EqR1} and \eqref{EqR5}.
The orientations on one side of the contracted edge are reversed.
This corresponds to having a transpose,
respectively an adjoint,
in the second term on the right-hand side 
in equation \eqref{EqR1}, respectively \eqref{EqR5}.
Further, in case of a twisted pairing, we have that
\begin{displaymath}
\vert\nu\vert
=
\vert \widecheck{\nu}_{\rm tw}\vert + 2,
\qquad
m(\nu) = m(\widecheck{\nu}_{\rm tw}),
\qquad
f_{\nu}(\rho) =
f_{\widecheck{\nu}_{\rm tw}}(\widecheck{\rho}),
\qquad
\chi_{\nu}(\rho)=
\chi_{\widecheck{\nu}_{\rm tw}}(\widecheck{\rho}) -1,
\end{displaymath}
\begin{displaymath}
w_{\nu,\beta=1}(\rho)=
\dfrac{1}{2}w_{\widecheck{\nu}_{\rm tw},\beta=1}(\widecheck{\rho}),
\qquad
w_{\nu,\beta=4}(\rho)=
- w_{\widecheck{\nu}_{\rm tw},\beta=4}(\widecheck{\rho}).
\end{displaymath}
So we get the coefficients in front of the second term
on the right-hand side 
in equations \eqref{EqR1} and \eqref{EqR5}.
\begin{figure}[ht]
\centering
\includegraphics[scale=0.8]{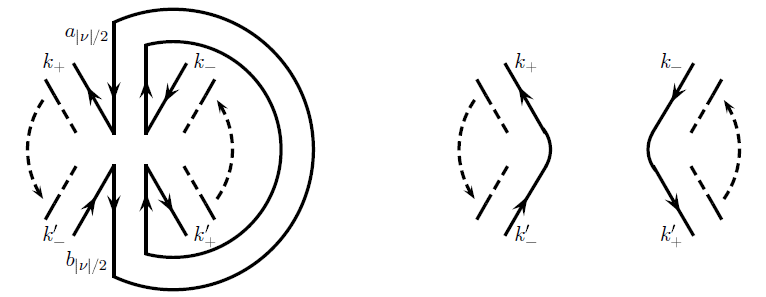}
\caption{On the left:
the ribbon half-edges
$a_{\vert\nu\vert/2}$ and $b_{\vert\nu\vert/2}$ 
belong to the same vertex and are paired in a straight way.
On the right: the result of the contraction of the corresponding straight ribbon edge. 
The vertex is split into two.
All the orientations are preserved.}
\label{FigSameStr}
\end{figure}
\begin{figure}[ht]
\centering
\includegraphics[scale=0.8]{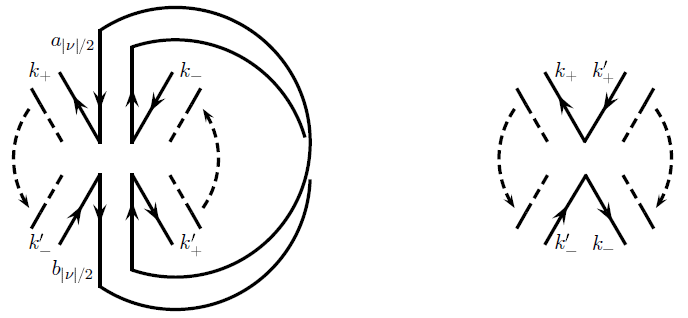}
\caption{On the left:
the ribbon half-edges
$a_{\vert\nu\vert/2}$ and $b_{\vert\nu\vert/2}$ 
belong to the same vertex and are paired in a twisted way.
On the right: the result of the contraction of the corresponding twisted ribbon edge. 
The vertex is not divided.
The orientations on one side of the contracted edge are reversed.}
\label{FigSameTw}
\end{figure}
\item The first degenerate case, 
when the vertex has other ribbon half-edges
on only one side between 
$a_{\vert\nu\vert/2}$ and $b_{\vert\nu\vert/2}$.
Then, for a straight pairing we have that
\begin{displaymath}
\vert\nu\vert
=
\vert \widecheck{\nu}_{\rm str}\vert + 2,
\qquad
m(\nu) = m(\widecheck{\nu}_{\rm str}),
\qquad
f_{\nu}(\rho) =
f_{\widecheck{\nu}_{\rm str}}(\widecheck{\rho})+1,
\qquad
\chi_{\nu}(\rho)=
\chi_{\widecheck{\nu}_{\rm str}}(\widecheck{\rho}),
\end{displaymath}
and again,
\begin{displaymath}
w_{\nu,\beta=1}(\rho)=
\dfrac{1}{2}w_{\widecheck{\nu}_{\rm str},\beta=1}(\widecheck{\rho}),
\qquad
w_{\nu,\beta=2}(\rho)=
w_{\widecheck{\nu}_{\rm str},\beta=2}(\widecheck{\rho}),
\qquad
w_{\nu,\beta=4}(\rho)=
2 w_{\widecheck{\nu}_{\rm str},\beta=4}(\widecheck{\rho}).
\end{displaymath}
Moreover, the side between 
$a_{\vert\nu\vert/2}$ and $b_{\vert\nu\vert/2}$
with no other ribbon half-edges gives rise,
after edge contraction/averaging by $M^{(\vert\nu\vert/2)}$,
to a factor consisting of a deterministic trace,
with no random matrices inside.

As for the twisted pairing, the result in this degenerate case is the same as in the generic case. Nothing changes.
\item The second degenerate case, 
when the vertex is of degree $2$ and its only ribbon half-edges
are $a_{\vert\nu\vert/2}$ and $b_{\vert\nu\vert/2}$.
Then, for a straight pairing we have that
\begin{displaymath}
\vert\nu\vert
=
\vert \widecheck{\nu}_{\rm str}\vert + 2,
\qquad
m(\nu) = m(\widecheck{\nu}_{\rm str})+1,
\qquad
f_{\nu}(\rho) =
f_{\widecheck{\nu}_{\rm str}}(\widecheck{\rho})+2,
\qquad
\chi_{\nu}(\rho)=
\chi_{\widecheck{\nu}_{\rm str}}(\widecheck{\rho})+2,
\end{displaymath}
and again,
\begin{displaymath}
w_{\nu,\beta=1}(\rho)=
\dfrac{1}{2}w_{\widecheck{\nu}_{\rm str},\beta=1}(\widecheck{\rho}),
\qquad
w_{\nu,\beta=2}(\rho)=
w_{\widecheck{\nu}_{\rm str},\beta=2}(\widecheck{\rho}),
\qquad
w_{\nu,\beta=4}(\rho)=
2 w_{\widecheck{\nu}_{\rm str},\beta=4}(\widecheck{\rho}).
\end{displaymath}
Moreover, after edge contraction/averaging by $M^{(\vert\nu\vert/2)}$,
appears a factor consisting of a product of two deterministic traces.

For a twisted pairing we have that
\begin{displaymath}
\vert\nu\vert
=
\vert \widecheck{\nu}_{\rm tw}\vert + 2,
\qquad
m(\nu) = m(\widecheck{\nu}_{\rm tw})+1,
\qquad
f_{\nu}(\rho) =
f_{\widecheck{\nu}_{\rm tw}}(\widecheck{\rho})+1,
\qquad
\chi_{\nu}(\rho)=
\chi_{\widecheck{\nu}_{\rm tw}}(\widecheck{\rho}) +1,
\end{displaymath}
and again,
\begin{displaymath}
w_{\nu,\beta=1}(\rho)=
\dfrac{1}{2}w_{\widecheck{\nu}_{\rm tw},\beta=1}(\widecheck{\rho}),
\qquad
w_{\nu,\beta=4}(\rho)=
- w_{\widecheck{\nu}_{\rm tw},\beta=4}(\widecheck{\rho}).
\end{displaymath}
Moreover, after edge contraction/averaging by $M^{(\vert\nu\vert/2)}$,
appears a factor consisting of a deterministic trace.
\end{itemize}

\medskip

Now we deal with the case of two vertices.
We further distinguish the following cases.
\begin{itemize}
\item The generic case, when at least one of the two vertices
is of degree at least two.
Then, the straight pairing of 
$a_{\vert\nu\vert/2}$ and $b_{\vert\nu\vert/2}$ is represented on
Figure \ref{FigDiffStr}.
After contracting the ribbon edge 
$\{a_{\vert\nu\vert/2},b_{\vert\nu\vert/2}\}$,
the two vertices are merged into one.
This corresponds to having a single trace in the first term on
the right-hand side 
in equations \eqref{EqR2}, \eqref{EqR4} and \eqref{EqR6}.
The orientations are preserved after contraction.
This corresponds to not having transposes or adjoints
in the first term on the right-hand side 
in equations \eqref{EqR2}, \eqref{EqR4} and \eqref{EqR6}.
Further, in case of a straight pairing, we have that
\begin{displaymath}
\vert\nu\vert
=
\vert \widecheck{\nu}_{\rm str}\vert + 2,
\qquad
m(\nu) = m(\widecheck{\nu}_{\rm str}) + 1,
\qquad
f_{\nu}(\rho) =
f_{\widecheck{\nu}_{\rm str}}(\widecheck{\rho}),
\qquad
\chi_{\nu}(\rho)=
\chi_{\widecheck{\nu}_{\rm str}}(\widecheck{\rho}).
\end{displaymath}
Thus,
\begin{displaymath}
w_{\nu,\beta=1}(\rho)=
\dfrac{1}{2}w_{\widecheck{\nu}_{\rm str},\beta=1}(\widecheck{\rho}),
\qquad
w_{\nu,\beta=2}(\rho)=
w_{\widecheck{\nu}_{\rm str},\beta=2}(\widecheck{\rho}),
\qquad
w_{\nu,\beta=4}(\rho)=
\dfrac{1}{2} w_{\widecheck{\nu}_{\rm str},\beta=4}(\widecheck{\rho}).
\end{displaymath}
So indeed we get the coefficients in front of the first term
on the right-hand side 
in equations \eqref{EqR2}, \eqref{EqR4} and \eqref{EqR6}.

The twisted pairing of 
$a_{\vert\nu\vert/2}$ and $b_{\vert\nu\vert/2}$ is represented on
Figure \ref{FigDiffTw}.
After contracting the ribbon edge 
$\{a_{\vert\nu\vert/2},b_{\vert\nu\vert/2}\}$,
the two vertices are merged into one.
This corresponds to having a single trace in the second term on
the right-hand side 
in equations \eqref{EqR2} and \eqref{EqR6}.
The orientations on one of the parent vertices are reversed after contraction.
This corresponds to the transpose, respectively adjoint,
in the second term on the right-hand side 
in equations \eqref{EqR2}, respectively \eqref{EqR6}.
Further, in case of a twisted pairing, we have that
\begin{displaymath}
\vert\nu\vert
=
\vert \widecheck{\nu}_{\rm tw}\vert + 2,
\qquad
m(\nu) = m(\widecheck{\nu}_{\rm tw}) + 1,
\qquad
f_{\nu}(\rho) =
f_{\widecheck{\nu}_{\rm tw}}(\widecheck{\rho}),
\qquad
\chi_{\nu}(\rho)=
\chi_{\widecheck{\nu}_{\rm tw}}(\widecheck{\rho}),
\end{displaymath}
\begin{displaymath}
w_{\nu,\beta=1}(\rho)=
\dfrac{1}{2}w_{\widecheck{\nu}_{\rm tw},\beta=1}(\widecheck{\rho}),
\qquad
w_{\nu,\beta=4}(\rho)=
\dfrac{1}{2} w_{\widecheck{\nu}_{\rm tw},\beta=4}(\widecheck{\rho}).
\end{displaymath}
So we get the coefficients in front of the second term
on the right-hand side 
in equations \eqref{EqR2} and \eqref{EqR6}.
\begin{figure}[ht]
\centering
\includegraphics[scale=0.8]{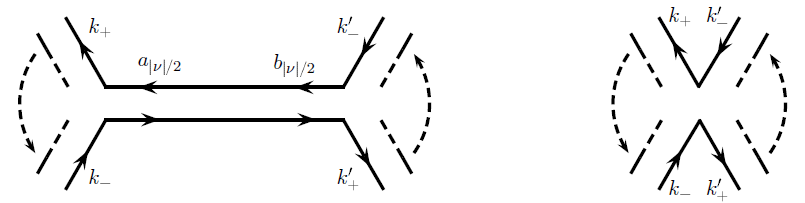}
\caption{On the left:
the ribbon half-edges
$a_{\vert\nu\vert/2}$ and $b_{\vert\nu\vert/2}$ 
belong to two different vertices and are paired in a straight way.
On the right: the result of the contraction of the corresponding straight ribbon edge. 
The two vertices are merged into one.
All the orientations are preserved.}
\label{FigDiffStr}
\end{figure}
\begin{figure}[ht]
\centering
\includegraphics[scale=0.8]{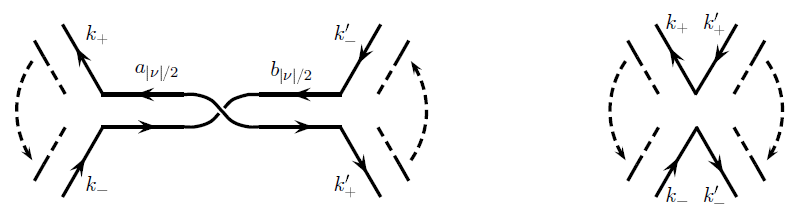}
\caption{On the left:
the ribbon half-edges
$a_{\vert\nu\vert/2}$ and $b_{\vert\nu\vert/2}$ 
belong to two different vertices and are paired in a twisted way.
On the right: the result of the contraction of the corresponding twisted ribbon edge. 
The two vertices are merged into one.
The orientations on one of the parent vertices are reversed.}
\label{FigDiffTw}
\end{figure}
\item The degenerate case, when both vertices
are of degree one.
Then, for a straight pairing we have that
\begin{displaymath}
\vert\nu\vert
=
\vert \widecheck{\nu}_{\rm str}\vert + 2,
\qquad
m(\nu) = m(\widecheck{\nu}_{\rm str})+2,
\qquad
f_{\nu}(\rho) =
f_{\widecheck{\nu}_{\rm str}}(\widecheck{\rho})+1,
\qquad
\chi_{\nu}(\rho)=
\chi_{\widecheck{\nu}_{\rm str}}(\widecheck{\rho})+2,
\end{displaymath}
and again,
\begin{displaymath}
w_{\nu,\beta=1}(\rho)=
\dfrac{1}{2}w_{\widecheck{\nu}_{\rm str},\beta=1}(\widecheck{\rho}),
\qquad
w_{\nu,\beta=2}(\rho)=
w_{\widecheck{\nu}_{\rm str},\beta=2}(\widecheck{\rho}),
\qquad
w_{\nu,\beta=4}(\rho)=
\dfrac{1}{2} w_{\widecheck{\nu}_{\rm str},\beta=4}(\widecheck{\rho}).
\end{displaymath}
The same coefficients appear for a twisted pairing.
Moreover, both in case of a straight and of a twisted pairing appears,
after edge contraction/averaging by $M^{(\vert\nu\vert/2)}$,
a factor consisting of a deterministic trace, with no random matrices inside.
\end{itemize}

\section*{Appendix B: a quaternionic Wick formula}

Here we present a Wick formula for quaternionic Gaussian r.v.s.
It is more general then the one given in
Theorem 3.1 in \cite{BrycPierce09GSE} and involves trails
(see Section \ref{SubSecRibbon}).
We are not aware whether our formula has already appeared elsewhere.

Let $(\xi_{i})_{i\geq 1}$ be a sequence of i.i.d.
quaternionic Gaussian r.v.s
distributed according to \eqref{EqQuatGauss}.
Let $\nu=(\nu_{1},\nu_{2},\dots,\nu_{m(\nu)})$, 
where for all
$l\in\{1,2,\dots,m(\nu)\}$,
$\nu_{l}\in\mathbb{N}\setminus\{0\}$, and $\vert\nu\vert$ is even.
Let $(\eta_{k})_{1\leq k\leq \vert\nu\vert}$
be a family of r.v.s,
where each $\eta_{k}$ is either a r.v.
$\xi_{i}$, or a r.v. $\bar{\xi_{i}}$,
for some $i\in\{1,\dots ,\vert\nu\vert\}$.
We also consider $\vert\nu\vert$ deterministic quaternions
\begin{displaymath}
q(1,2),\dots,q(\nu_{1}-1,\nu_{1}),
q(\nu_{1},1),
\end{displaymath}
\begin{displaymath}
q(\nu_{1}+1,\nu_{1}+2),\dots,
q(\nu_{1}+\nu_{2}-1,\nu_{1}+\nu_{2}),
q(\nu_{1}+\nu_{2},\nu_{1}+1),
\end{displaymath}
\begin{displaymath}
\dots,
q(\vert\nu\vert-\nu_{m(\nu)}+1,
\vert\nu\vert-\nu_{m(\nu)}+2),
\dots,
q(\vert\nu\vert-1,\vert\nu\vert),
q(\vert\nu\vert,\vert\nu\vert-\nu_{m(\nu)}+1).
\end{displaymath}
Note that for each
$l\in \{1,\dots,m(\nu)\}$ such that
$\nu_{l}=1$, we have a single quaternion
$q(\nu_{1}+\dots +\nu_{l},\nu_{1}+\dots +\nu_{l})$.
For $l\in \{1,\dots,m(\nu)\}$,
$\Pi_{\nu,l}(\eta,q)$ will denote the product
\begin{multline*}
\Pi_{\nu,l}(\eta,q)=
\eta_{\nu_{1}+\dots +\nu_{l-1}+1}
q(\nu_{1}+\dots +\nu_{l-1}+1,
\nu_{1}+\dots +\nu_{l-1}+2)
\eta_{\nu_{1}+\dots +\nu_{l-1}+2}
\\
\dots q(\nu_{1}+\dots +\nu_{l}-1,
\nu_{1}+\dots +\nu_{l})
\eta_{\nu_{1}+\dots +\nu_{l}}
q(\nu_{1}+\dots +\nu_{l},
\nu_{1}+\dots +\nu_{l-1}+1).
\end{multline*}
In case $\nu_{l}=1$,
$\Pi_{\nu,l}(\eta,q)=\eta_{\nu_{1}+\dots +\nu_{l}}
q(\nu_{1}+\dots +\nu_{l},\nu_{1}+\dots +\nu_{l})$.
We will express the moment
\begin{equation}
\label{Eq Mom quater}
\E\Big[
\prod_{l=1}^{m(\nu)}
\Re\Big(\Pi_{\nu,l}(\eta,q)\Big)
\Big].
\end{equation}
Theorem 3.1 in \cite{BrycPierce09GSE} gives an expression in case
all the quaternions $q(k,k')$ equal $1$.
Note that in order for the moment \eqref{Eq Mom quater}
to be different from $0$,
the number of occurrences of $\xi_{i}$
plus the number of occurrences of $\bar{\xi_{i}}$
in the family $(\eta_{k})_{1\leq k\leq \vert\nu\vert}$
has to be even for each $i$.
$\mathcal{R}_{\nu,\eta}$ will denote the subset of
$\mathcal{R}_{\nu}$
made of all ribbon pairings $\rho$
such that for each straight ribbon edge
with labels $k,k'$,
$\eta_{k} = \overline{\eta_{k'}}$ a.s.,
and for each twisted ribbon edge with labels $k,k'$,
$\eta_{k} = \eta_{k'}$ a.s.
Given a ribbon pairing
$\rho\in \mathcal{R}_{\nu,\eta}$
and a trail $\mathtt{t}\in\mathcal{T}_{\nu}(\rho)$,
let $\widetilde{\Pi}_{\mathtt{t}}(q)$
denote a product of quaternions of the form
$q(k,k')$ or
$\overline{q(k',k)}$.
For each sequence
$k\rightarrow k'$ in the trail $\mathtt{t}$ we add the factor
$q(k,k')$ to the product, and for each
sequence $k\leftarrow k'$, we add the factor
$\overline{q(k',k)}$, 
all by respecting cyclic order of the trail.

\begin{propB}
With the notations above, we have that
\begin{equation}
\label{Eq quaternionic Wick trails}
\E\Big[
\prod_{l=1}^{m(\nu)}
\Re\Big(\Pi_{\nu,l}(\eta,q)\Big)
\Big]
=
2^{\vert\nu\vert/2}
\sum_{\rho\in \mathcal{R}_{\nu,\eta}}
w_{\nu,\beta=4}(\rho)
\prod_{\mathtt{t}\in\mathcal{T}_{\nu}(\rho)}
\Re\Big(\widetilde{\Pi}_{\mathtt{t}}(q)\Big)
.
\end{equation}
\end{propB}

\begin{proof}
On can proceed by induction on ribbon edges as in the proof
of Lemma \ref{LemTransfo}.
The case $\vert\nu\vert=2$ is given by
Lemma \ref{LemQuaternionWick}.
\end{proof}

\section*{Acknowledgements}

This work was supported by the French National Research Agency (ANR) grant
within the project MALIN (ANR-16-CE93-0003).

The author thanks the two anonymous reviewers for their helpful remarks on the previous version of this paper.

\bibliographystyle{alpha}
\bibliography{titusbibnew}

\end{document}